\documentclass[11pt,reqno]{amsart}

\usepackage[T1]{fontenc}
\usepackage{amsmath,amsthm,amssymb}
\usepackage{geometry}
\usepackage{xcolor}
\usepackage[hidelinks]{hyperref}
\IfFileExists{mathrsfs.sty}{\usepackage{mathrsfs}}{\newcommand{\mathscr}[1]{\mathcal{#1}}}

\allowdisplaybreaks
\numberwithin{equation}{section}

\newtheorem{theorem}{Theorem}[section]
\newtheorem{proposition}[theorem]{Proposition}
\newtheorem{lemma}[theorem]{Lemma}
\newtheorem{corollary}[theorem]{Corollary}

\theoremstyle{definition}

\newtheorem{remark}[theorem]{Remark}

\newcommand{\R}{\mathbb R}

\newcommand{\dd}{\,\mathrm d}

\newcommand{\weakto}{\rightharpoonup}

\newcommand{\ind}{\mathbf 1}
\newcommand{\sgn}{\operatorname{sgn}}

\makeatletter
\def\@settitle{\begin{center}\baselineskip=18pt\relax
  {\LARGE\sffamily\bfseries\@title\par}\end{center}}
\makeatother

\title[Finite-data inverse nodal optimization]{Finite-data inverse nodal optimization in angular-momentum sectors of Schr\"{o}dinger operators}

\author{Xijun Deng, Zhisu Liu, Yonghui Xia}

\address[X. J. Deng]{\newline\indent School of Mathematics,
\newline\indent
Hubei University of Automotive Technology,
Shiyan, Hubei, 442002, P. R. China}
\email{\href{mailto:xijundeng@yeah.net}{xijundeng@yeah.net}}

\address[Z. S. Liu]{School of Mathematics and Physics, China University of Geosciences,
\newline\indent
Wuhan, Hubei 430074, P. R. China\\
\newline\indent
Institute for Advanced Marine Research, China University of Geosciences,
\newline\indent
Guangzhou 511462, P. R. China}
\email{\href{liuzhisu@cug.edu.cn}{liuzhisu@cug.edu.cn}}

\address[Y. H. Xia]{School of Mathematics, Foshan University, Foshan, 528133, P. R. China}
\email{\href{yhxia@zjnu.cn}{yhxia@zjnu.cn}}

\thanks{Liu was supported by the National Natural Science Foundation
of China (No.12571188), and Xia was supported by the National Natural Science Foundation
of China (No.12571165) and the Natural Science Foundation of Guangdong Province (2026A1515011073).}

\date{}

\subjclass[2020]{Primary 34B24, 35J10, 49J50; Secondary 34L40, 47A75, 81Q10}
\keywords{inverse nodal problem, radial Schr\"odinger operator,
singular Sturm--Liouville problem, multiple nodal observations,
 angular momentum}

\begin{document}

\begin{abstract}
In this paper, we study a finite-data inverse nodal optimization problem for radial
Schr\"odinger operators
$$
 H_q:=-\Delta+q(|x|),\qquad u|_{\partial B_R}=0,
 \qquad d\geq2,
$$
on the ball $B_R\subset\mathbb R^d$, in an arbitrary fixed angular-momentum
sector.  The analysis is built directly at the Friedrichs
endpoint and in the physical weighted space $L_d^p$, $p>d/2$, so that the
singular radial geometry is retained rather than replaced by a regular
one-dimensional model.

The main purpose of this paper is to provide \emph{a singular Friedrichs finite-data variational framework} valid in every
angular-momentum sector, thereby extending the existing finite-data variational theories
concerning either regular one-dimensional operators or the radial sector $\ell=0$. By means of a Volterra
representation of the Friedrichs branch, we prove weak continuity and continuous
Fr\'{e}chet differentiability of nodal radii, exact realization of compatible same-mode nodal data, existence of optimal
potentials, and finite-codimensional constraint geometry. The same framework also
incorporates mixed angular-momentum and spectral--nodal observations through finite-dimensional transversality.

Remarkably, \emph{a global uniqueness theorem} is established for inward displacements of
the unique interior node of the second mode in the radial sector \(\ell=0\).
 For a constant reference potential and \(p>(d+2)/2\), every such displacement
 admits a unique global optimizer. Unlike local inverse-mapping or one-dimensional
 integrability arguments, the proof first selects the admissible critical sign globally
 and then reduces every minimizer to a scalar mass-balance equation between a focusing ball
 branch and a logistic annulus branch. The strict opposite monotonicity of the two weighted
 masses makes the balance parameter unique,
 providing a global rigidity mechanism over the entire inward-displacement regime.
\end{abstract}

\maketitle

\section{Introduction}
Radial Schr\"odinger operators arise naturally in quantum mechanics for
particles moving in central or spherically confining fields.  After separation
of variables, angular momentum produces an inverse-square centrifugal term,
while the zeros of a radial eigenfunction correspond to spherical nodal
shells and therefore provide spatial information about the underlying
interaction or confinement potential; see, for example,
\cite{LandauLifshitz1977,Teschl2014}.  This structure also appears in
effective-mass models of spherical semiconductor quantum dots, where the shape
of the radial confinement potential has a decisive influence on energy levels
and wave functions \cite{GharaatiKhordad2010}.  From the inverse-quantum
viewpoint, recovering an unknown interaction from spectral or wave information
is a fundamental problem \cite{ChadanSabatier1989}.  These considerations
motivate a finite-data formulation: rather than assuming an asymptotically
rich or dense set of nodal measurements, one seeks a physically admissible
potential compatible with finitely many observations and, among all such
potentials, selects the one closest to a prescribed reference profile.

Motivated by these considerations, we study a finite-data inverse nodal
optimization problem for radial Schr\"odinger operators on higher-dimensional
balls.  Given finitely many nodal radii, possibly together with one eigenvalue,
we look for a radial potential which realizes the prescribed data and has
minimal distance from a given reference potential in the physical radial
$L^p(B_R)$ metric.  We prove existence of optimal potentials, establish
Fr\'echet differentiability and explicit gradient formulas for the nodal maps,
and obtain local minimum-norm reconstruction and uniqueness results.  We also
treat mixed spectral--nodal data and, in a particular radial configuration,
prove global uniqueness of the optimizer.

We consider the Dirichlet Schr\"odinger operator
\begin{equation}\label{eq:full-operator}
 H_q=-\Delta+q(|x|),\qquad u|_{\partial B_R}=0,
 \qquad d\geq2,
\end{equation}
on the ball $B_R\subset\mathbb R^d$, with a radial potential $q$.  Since
$H_q$ commutes with rotations, it decomposes into angular-momentum sectors.
If $Y_{\ell,k}$ is a spherical harmonic of degree $\ell\in\mathbb N_0$, then
a separated eigenfunction $u(r)Y_{\ell,k}(\omega)$ has radial profile satisfying
\begin{equation}\label{eq:sector-equation-intro}
 -u''-\frac{d-1}{r}u'
 +\frac{\ell(\ell+d-2)}{r^2}u+q(r)u=\lambda u,
 \qquad 0<r<R,
\end{equation}
with the Friedrichs regularity condition at $r=0$ and $u(R)=0$.  Under the
Liouville transformation $v=r^{(d-1)/2}u$, this becomes the singular
one-dimensional equation
\begin{equation}\label{eq:sector-liouville-intro}
 -v''+\left(\frac{\nu_\ell^2-\frac14}{r^2}+q(r)\right)v=\lambda v,
 \qquad \nu_\ell:=\ell+\frac{d-2}{2}.
\end{equation}
Thus the radial nodal problem naturally leads to a singular Sturm--Liouville
equation.  The singularity at the origin has to be taken into account in the
spectral and perturbative analysis.  In a regular
Sturm--Liouville problem on a compact interval both endpoints admit ordinary
trace boundary conditions and the solution map can be initialized by standard
Cauchy data.  At $r=0$, by contrast, the coefficients in
\eqref{eq:sector-equation-intro}--\eqref{eq:sector-liouville-intro} are
singular; the admissible branch is selected by membership in the Friedrichs
form domain.  For $\ell>0$ the centrifugal term
$\ell(\ell+d-2)r^{-2}$ is explicit, while after the Liouville transform the
coefficient $(\nu_\ell^2-1/4)r^{-2}$ also records the endpoint singularity
(and is critical in the case $d=2$, $\ell=0$).  Consequently we cannot
simply prescribe $v(0)$ and $v'(0)$ and invoke the regular ODE dependence
machinery used in the classical theory.

The development of the subject may be summarized in three stages.  First,
classical inverse nodal theory established that nodal points can determine a
one-dimensional Sturm--Liouville potential.  The early uniqueness theorem of
McLaughlin \cite{McLaughlin1988} was followed by the reconstruction theory of
Hald and McLaughlin \cite{HaldMcLaughlin1989,H-M} and by further uniqueness
and reconstruction results of Yang and of Guo and Wei
\cite{Yang1997,yang2,guo-wei}.  Extensions include discontinuous and
quasilinear problems \cite{YANGCF,w-y,P-S} and nonlinear boundary-value
settings \cite{V-I2019}.

Second, the dependence of nodes on the potential and the effect of incomplete
or noisy nodal information became central.  Weak-topology continuity for ODE
systems was developed by Zhang \cite{zhang}, and Chen, Cheng, and Law
\cite{Chen-Cheng11} reconstructed a regular one-dimensional potential from
the zeros of one eigenfunction by three procedures, including Tikhonov
regularization, with convergence and error estimates.  More recently, Guo and
Zhang \cite{GuoZhang2022} proved complete continuity and variational formulas
for Sturm--Liouville nodes, while Chu, Meng, Wang, and Zhang
\cite{ChuMengWangZhang2025} obtained corresponding differentiability results
for one-dimensional nonlinear operators.  These results supply the compactness
and linearization needed for optimization under nodal constraints.

Third, inverse nodal questions were formulated as variational optimization
problems.  Guo and Zhang \cite{G-Z2} optimized the location of the unique node
of the second Dirichlet eigenfunction.  Chu, Meng, Wang, and Zhang
\cite{ChuMengWangZhang2024} characterized optimal nodal locations under an
$L^p$ constraint on the potential.  He, Wu, Xia, and Zhang
\cite{HeWuXiaZhang2025} then introduced a nearest-target formulation for
finitely many nodal observations in the regular one-dimensional problem.  Its
optimality conditions are nonlinear Schr\"odinger equations; for a constant
target the critical system is completely integrable, reduces to three
characteristic parameters, and yields uniqueness for $p>3/2$.  More recently,
Chu, Meng, and Xie \cite{ChuMengXie2026} derived sharp $L^1$ lower bounds when
the unique node of the second eigenfunction is fixed.

Singular inverse nodal problems form a parallel line of development.
Regular-singular, Bessel-type, and discontinuous problems were studied in
\cite{PanakhovKoyunbakan2006,KoyunbakanPanakhov2006,KoyunbakanPanakhov2007},
while inverse spectral theory for radial and Bessel-type Schr\"odinger
operators was developed in
\cite{AlbeverioHrynivMykytyuk2007,Serier2007,KostenkoSakhnovichTeschl2010,
KostenkoTeschl2011,XuYangBondarenko2023}; recent work treats interaction
potentials and finite-data numerical
reconstruction \cite{ArslantasDurakAmirov2026,JiangXuYang2026}.  These results
are primarily based on nodal asymptotics or direct reconstruction.  By
contrast, the present problem imposes finitely many observations as
constraints on an infinite-dimensional class and selects the compatible
potential nearest to a prescribed reference profile.

Several additional difficulties arise when one passes from the regular
one-dimensional setting to radial Schr\"odinger operators on
$B_R\subset\mathbb R^d$.

First, the
radial reduction carries the physical weight $r^{d-1}\,dr$, so both the
optimization metric and the duality structure differ from their unweighted
one-dimensional counterparts.  In particular, weak compactness, form bounds,
and the representation of nodal derivatives must be proved in $L_d^p$ and its
weighted dual rather than in the usual unweighted spaces.

Second, the origin
is a singular endpoint.  In particular, for $\ell>0$, the inverse-square centrifugal term prevents \(r=0\) from being
treated as an ordinary regular endpoint, so the admissible branch must be selected through the
Friedrichs form domain rather than by prescribing standard Cauchy data.


Third, nodal observations may come from several eigenmodes
or angular-momentum sectors, where the automatic one-mode independence
mechanism no longer applies and a transversality theory is required.  Finally,
finite spectral--nodal constraints lead naturally to a mixed inverse problem
which is not covered by the standard inverse nodal framework.

Consequently, the regular one-dimensional finite-data theory cannot be
transferred by a formal change of notation: the singular endpoint must be
incorporated simultaneously into the spectral realization, compactness
theory, Fr\'echet linearization, boundary-term analysis, and nonlinear
optimality system. The recent work of Cheng, He, Wang, and Xia
\cite{ChengHeWangXia2026} developed the finite variational inverse nodal
problem for the radial branch $\ell=0$ on a ball.  In the physical weighted
space it establishes nodal $C^1$ regularity and weak continuity, exact
realization, finite-codimensional constraint geometry, existence of
nearest-target potentials, critical equations, and cross-node mass balance;
it also gives a local benchmark uniqueness result in the Hilbert case under a
$C^2$ hypothesis.

At the level of formulation, the earlier radial-sector theory is recovered from the present framework
by setting \(\ell=0\). The extension to \(\ell\ge1\), however, is analytically nonformal because
the centrifugal inverse-square term changes the
Friedrichs endpoint analysis, the linearized nodal calculus, and the variational identities.
More precisely,
in the
radial sector the profile equation contains no centrifugal inverse-square
term, and its energy identity retains sign-definite radial dissipation, as said before.  For
$\ell\ge1$, however, the term
$\ell(\ell+d-2)r^{-2}$ changes both the endpoint and the nonlinear structure:
finite form energy selects the branch $u=r^\ell V$, the potential derivative
must remain on the same Friedrichs branch, and the endpoint Wronskian can be
discarded only after establishing new Volterra estimates in the effective
dimension $d+2\ell$.  The same centrifugal term persists in the
Euler--Lagrange and energy identities, where it destroys the monotonicity
underlying the radial global argument.  Thus the general fixed-sector theory
requires additional endpoint calculus, compactness and linearization
analysis, and variational identities; the radial $\ell=0$ results cannot be
transferred directly.

The present paper advances that foundation in two directions. First, it establishes a singular Friedrichs
finite-data variational framework, within which it develops the Friedrichs nodal calculus for every fixed
angular-momentum sector, treats mixed-sector observations under transversality, and proves automatic
transversality for same-mode multi-node and spectral-nodal data. Second,
it resolves the spectral-matching problem globally for every inward displacement of
the unique node of the second radial mode.  These extensions, rather than
the introduction of the radial finite-data model itself, constitute the main
novel content of the paper.

We now state the problem precisely.  Throughout
Sections~\ref{sec:critical}--\ref{sec:multi-node}, we fix one angular momentum
$\ell\in\mathbb N_0$ and set
\begin{equation}\label{eq:fixed-sector-parameters}
 \gamma:=\ell(\ell+d-2),\qquad
 \nu:=\ell+\frac{d-2}{2}.
\end{equation}
For notational economy, $\lambda_m(q)$, $E_m(\cdot;q)$,
$T_{i,m}(q)$, and $g_{i,m}(\cdot;q)$ denote the eigenvalue,
normalized eigenfunction, $i$th interior node, and nodal gradient in this
fixed sector.  Full superscripts $\lambda_{\ell,m}$ and
$T_{i,m}^{(\ell)}$ are restored in Section~\ref{sec:mixed-data}, where
different sectors are used simultaneously.  Thus the $\ell=0$ radial profile
is included throughout as a special case rather than treated by a separate
parallel theory.

Write
\begin{equation}\label{eq:weighted-spaces}
 \dd\mu_d(r)=r^{d-1}\dd r,
 \qquad L_d^p:=L^p((0,R),\dd\mu_d),
 \qquad p'=\frac{p}{p-1}.
\end{equation}
Up to the constant factor $|\mathbb S^{d-1}|^{1/p}$, this is exactly the
physical radial $L^p(B_R)$ norm.  For fixed $\ell$, let
\[
 \lambda_1(q)<\lambda_2(q)<\cdots,
\]
and let the $m$th eigenfunction have simple interior zeros
\[
 0<T_{1,m}(q)<\cdots<T_{m-1,m}(q)<R.
\]
Given one observed node $T_*\in(0,R)$ and a reference potential
$q_0\in L_d^p$, the basic inverse optimization problem is
\begin{equation}\label{eq:RINP}
 \min\left\{\|q-q_0\|_{L_d^p}:
 T_{i,m}(q)=T_*,\ q\in L_d^p\right\},
\end{equation}
with admissible set
\begin{equation}\label{eq:admissible-set}
 \mathcal S_{T_*}:=\{q\in L_d^p:T_{i,m}(q)=T_*\}.
\end{equation}

The contributions of the paper are organized around two main points.

The first contribution is a singular Friedrichs finite-data variational
framework for an arbitrary fixed angular-momentum sector. The factorization $u=r^\ell V$
reveals the effective dimension $d+2\ell$, preserves the linearized Friedrichs branch, and
eliminates the endpoint Wronskian term. This framework yields weak continuity and continuous Fr'echet
differentiability of nodal radii, exact realization of compatible same-mode nodal data, existence of optimal
potentials, and finite-codimensional constraint geometry. In particular, the gradients associated with distinct
nodes of the same eigenfunction are automatically linearly independent. It also leads to a finite-dimensional
observation principle for mixed angular-momentum data under a natural transversality condition and for simultaneous
spectral--nodal data from a single eigenmode, where transversality is automatic. The principle provides exact
local feasible corrections, minimum-norm asymptotics, and, in the Hilbert setting, inverse-Gram formulas and local
uniqueness. For $p=2$ and $d\in{2,3}$, the required $C^2$ regularity of the nodal maps is established rather than assumed.

The second contribution is a global uniqueness theorem for inward displacements of the unique interior node of
the second radial mode. For $d\ge2$, $\ell=0$, a constant reference potential, and $p>(d+2)/2$, every such displacement
admits a unique global minimizer. The proof first selects the critical sign for every global minimizer and then reduces
the optimality system to a scalar weighted-mass balance between a focusing ball branch and a logistic annulus branch.
The strict opposite monotonicity of the two masses makes the balance parameter unique and consequently determines the
optimizer uniquely. This is an approach entirely different from local inverse-mapping theorem or
one-dimensional integrability arguments in \cite{HeWuXiaZhang2025}. Note that this result provides a genuinely global rigidity mechanism beyond the local inverse-mapping theory.

Throughout the paper we assume
\begin{equation}\label{eq:main-p-assumption}
 d\ge2,\qquad 1<p<\infty,\qquad p>\frac d2.
\end{equation}
All function spaces, duality pairings, potentials, and perturbations are
understood over the real field.  In particular, every admissible potential
$q\in L_d^p$ is real-valued, ensuring that the fixed-sector form is symmetric
and the associated realization is self-adjoint.

This paper is organized as follows. Section~\ref{sec:critical} develops the fixed-sector spectral and nodal
calculus, proves feasibility and existence, and derives the critical system.
Section~\ref{sec:reconstruction} establishes the local reconstruction theory,
the shooting representation, and the global rigidity theorem.  Section~\ref{sec:multi-node}
treats several nodes of one fixed mode, while Section~\ref{sec:mixed-data}
uses the abstract observation principle for mixed-sector and spectral--nodal
data.  The final section records the remaining open problems.

\section{The fixed-sector critical system}\label{sec:critical}

\subsection{Fixed-sector spectral preliminaries}
Let $Y_{\ell,k}$ be a real $L^2(\mathbb S^{d-1})$-normalized spherical
harmonic of degree $\ell$.  Let us define the profile form domain by the exact
sector identification
\begin{equation}\label{eq:sector-form-domain}
 \mathcal H_{\ell,0}:=
 \left\{u:(x\mapsto u(|x|)Y_{\ell,k}(x/|x|))\in H_0^1(B_R)\right\},
\end{equation}
which is a Hilbert space with norm
\begin{equation*}
 \|u\|_{\mathcal H_{\ell,0}}^2
 :=\int_0^R\left(|u'|^2+\frac{\gamma}{r^2}|u|^2+|u|^2\right)\dd\mu_d.
\end{equation*}
Equivalently, it is the completion, in the above norm,
of smooth profiles $u$ for which
$u(r)=r^\ell w(r^2)$ near $r=0$ and $u(R)=0$.  Thus the endpoint condition
at $0$ is precisely the Friedrichs condition, rather than an additional
pointwise boundary condition.  The definition is independent of the choice
of the normalized harmonic $Y_{\ell,k}$.

\begin{remark}
For a regular Sturm--Liouville equation, the operator domain is described by
ordinary traces at the endpoints, and perturbation theory may be based on
solutions initialized by Cauchy data there.  Here the quantity
$\gamma r^{-2}$ is singular for $\ell>0$ and the radial coefficient
$(d-1)r^{-1}$ is singular at the level of the differential expression.  The
condition at $0$ is therefore encoded by finite quadratic-form energy, not by
freely prescribing two traces.  The factorization $u=r^\ell V$ below is what
separates the Friedrichs branch from the non-admissible fundamental solution.
This distinction is essential later when differentiating eigenfunctions with
respect to $q$: the linearized solution must remain on the same Friedrichs
branch, otherwise the Lagrange identity may acquire an uncontrolled endpoint
term and the nodal derivative formula need not follow.
\end{remark}

For completeness, the equivalence follows directly from spherical-harmonic
projection.  If $U(x)=u(r)Y_{\ell,k}(\omega)$, then polar coordinates and
\(-\Delta_{\mathbb S^{d-1}}Y_{\ell,k}=\gamma Y_{\ell,k}\)
give the exact identity
\begin{equation}\label{eq:sector-isometry}
\begin{aligned}
 \int_{B_R}\bigl(|\nabla U|^2+|U|^2\bigr)\dd x=\int_0^R\left(|u'|^2+\frac{\gamma}{r^2}|u|^2\right)\dd\mu_d+\int_0^R|u|^2\dd\mu_d.
\end{aligned}
\end{equation}

For $q\in L_d^p$, define
\begin{equation}\label{eq:sector-form}
 \mathfrak h_q[u,v]
 :=\int_0^R\left(u'v'+\frac{\gamma}{r^2}uv+quv\right)\dd\mu_d,
 \qquad u,v\in\mathcal H_{\ell,0}.
\end{equation}
By the Sobolev and compact Sobolev embeddings on $B_R$, applied to
$u(r)Y_{\ell,k}(\omega)$, together with H\"older's inequality and
interpolation, the potential is infinitesimally form bounded with respect
to the free fixed-sector form.  More precisely, under
\eqref{eq:main-p-assumption}, for every $M>0$ and every $\epsilon>0$ there
exists $C_{M,\epsilon}>0$ such that
\begin{equation}\label{eq:form-bound}
 \left|\int_0^Rqu^2\dd\mu_d\right|
 \leq \epsilon\int_0^R\left(|u'|^2+\frac{\gamma}{r^2}|u|^2\right)\dd\mu_d
 +C_{M,\epsilon}\int_0^Ru^2\dd\mu_d
\end{equation}
for all $q\in L_d^p$ with $\|q\|_{L_d^p}\leq M$ and all
$u\in\mathcal H_{\ell,0}$.  The same Sobolev argument gives the compact
embedding
\begin{equation}\label{eq:sector-compact-embedding}
 \mathcal H_{\ell,0}\hookrightarrow L_d^{2p'}.
\end{equation}
Consequently, the form \eqref{eq:sector-form} is closed and bounded below
on $\mathcal H_{\ell,0}$.  Its Friedrichs operator in $L_d^2$ is denoted by
$H_{q,\ell}:=-u''-\frac{d-1}{r}u'
 +\frac{\ell(\ell+d-2)}{r^2}u+q(r)u$ and has compact resolvent.
The assumption $p>d/2$ also gives the endpoint integrability needed below:
\begin{equation}\label{eq:rq-integrable}
 \int_0^Rr|q(r)|\dd r<\infty.
\end{equation}
Indeed,
\[
 \int_0^Rr|q(r)|\dd r
 \leq\|q\|_{L_d^p}
 \left(\int_0^Rr^{\frac{p-d+1}{p-1}}\dd r\right)^{1/p'},
\]
and the last integral is finite exactly because $2p>d$.

\begin{lemma}
\label{lem:sector-volterra-endpoint}
Let $f,F\in L_d^p(0,R)$ and suppose that a Friedrichs
solution $u\in\mathcal H_{\ell,0}$ satisfies, near $r=0$,
\begin{equation}\label{eq:generic-sector-inhomogeneous}
 -u''-\frac{d-1}{r}u'+\frac{\gamma}{r^2}u+f(r)u
 =r^\ell F(r)
\end{equation}
in the weak sense.  Then $u(r)=r^\ell V(r)$, where $V$ has a continuous
extension to $r=0$.  If $a:=V(0)$, then
\begin{equation}\label{eq:sector-volterra-formula}
 V(r)=a+\int_0^r s^{1-(d+2\ell)}\int_0^s
 t^{d+2\ell-1}\big(f(t)V(t)-F(t)\big)\dd t\dd s.
\end{equation}
Moreover,
\begin{equation}\label{eq:sector-volterra-estimates}
 V(r)=a+O(r^{2-d/p}),\qquad
 V'(r)=O(r^{1-d/p}),\qquad rV'(r)\longrightarrow0
 \quad(r\downarrow0).
\end{equation}
For the homogeneous equation $F=0$, a nonzero Friedrichs solution has
$a\ne0$.
\end{lemma}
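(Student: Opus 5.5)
Set $N:=d+2\ell$. The plan is to substitute $u=r^\ell V$ and reduce to a radial equation in effective dimension $N$ with no centrifugal term. A direct computation gives
\[
 -u''-\frac{d-1}{r}u'+\frac{\gamma}{r^2}u
 =r^\ell\Bigl(-V''-\frac{N-1}{r}V'\Bigr),
\]
since the $r^{\ell-2}$ terms $-\ell(\ell-1)-(d-1)\ell+\gamma$ cancel. Hence near $0$ the equation becomes
\[
 -(r^{N-1}V')'=r^{N-1}(F-fV)
\]
in the weak sense.

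The first step is to show that the Friedrichs condition $u\in\mathcal H_{\ell,0}$ forces the flux $r^{N-1}V'(r)$ to tend to $0$. The general local solution space of the homogeneous singular problem is spanned by $r^\ell$-type and $r^{-\ell-d+2}$-type behaviour, or $\log r$ when $N=2$, i.e.\ $d=2,\ell=0$. Integrating once gives
\[
 r^{N-1}V'(r)=c+\int_r^{r_0}\dots .
\]
A nonzero constant $c$ produces $V\sim r^{2-N}$ or $\log r$, so $u\sim r^{2-\ell-d}$ or $\log r$. This has $\int(|u'|^2+\gamma r^{-2}u^2)\dd\mu_d=\infty$ near $0$, which contradicts finite form energy. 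To make this rigorous I would argue by variation of constants: write $V=V_{\rm part}+c\,\phi$ with $\phi$ the singular solution, and check that the singular part is not in the form domain. The particular part must be controlled a priori, and this is where the main obstacle lies. I would first derive a crude bound from $u\in\mathcal H_{\ell,0}$, using Sobolev on $B_R$ to get $u\in L_d^{2p'}$. Hölder then gives
\[
 \int_0^r t^{N-1}|fV|\dd t<\infty,
\]
with $F\in L^p_d$ handled similarly. This identifies the flux limit as a finite number, which must vanish. Integrating from $0$ yields
\[
 V'(s)=s^{1-N}\int_0^s t^{N-1}(fV-F)\dd t,
\]
and integrating once more gives \eqref{eq:sector-volterra-formula}, with $a=\lim V$ existing once $V'$ is shown to be integrable.

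The second step is the estimates, obtained by a bootstrap through the Volterra equation. Assume first that $V$ is bounded near $0$; I would obtain this by a Gronwall argument on the Volterra equation. Hölder in the measure $t^{d-1}\dd t$ gives
\[
 \int_0^s t^{N-1}|f|\dd t\le \|f\|_{L^p_d}\Bigl(\int_0^s t^{(2\ell+d-1)p'-(d-1)(p'-1)}\dd t\Bigr)^{1/p'}\lesssim s^{N-d/p},
\]
and the same bound holds for $F$. Therefore $|V'(s)|\lesssim s^{1-d/p}$ and $rV'\to0$. Since $2-d/p>0$ by $p>d/2$, integration gives $V=a+O(r^{2-d/p})$. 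Boundedness of $V$ itself follows from the Volterra kernel estimate
\[
 \int_0^r s^{1-N}\int_0^s t^{N-1}|f|\dd t\dd s\lesssim r^{2-d/p}\to0,
\]
so on a small interval the integral operator is a contraction in $C[0,r_0]$. This gives existence and uniqueness of a continuous solution for each $a$. I would then need to show that this solution coincides with $V$, using uniqueness of the flux-zero branch from the first step.

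Finally, for $F=0$ with $a=0$, the contraction on $[0,r_0]$ has the unique fixed point $V\equiv0$. Standard ODE uniqueness on $(r_0,R)$, where the equation is regular with $L^1_{\rm loc}$ coefficients, then gives $u\equiv0$. Hence a nonzero Friedrichs solution has $a\ne0$.
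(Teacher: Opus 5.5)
Your outline matches the paper's proof in all the routine steps:
\begin{itemize}
\item the substitution $u=r^\ell V$ into effective dimension $N=d+2\ell$;
\item integrability of $r^{N-1}(fV-F)$ at $0$, from $u\in L_d^{2p'}\subset L_d^{p'}$;
\item vanishing of the flux limit by infinite form energy;
\item the H\"older bound $\int_0^r t^{N-1}|f|\dd t\lesssim r^{N-d/p}$;
\item the contraction argument for $F=0$, $a=0$.
\end{itemize}

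The gap is the one nontrivial step, which you only announce: showing that the given $V$ is bounded near $0$, i.e.\ that it coincides with a regular Volterra solution $V_a$. The zero-flux identity $V'(s)=s^{1-N}\int_0^s t^{N-1}(fV-F)\dd t$ involves $V$ on all of $(0,s)$. So the contraction in $C([0,r_0])$ only produces \emph{some} bounded solution $V_a$, and neither it nor a Gronwall argument can be run on $V$ itself without prior control of $V$ near $0$. The a priori information you invoke, $u\in L_d^{2p'}$, gives $\int_0^s t^{N-1}|fV|\dd t\lesssim s^{\ell+d/(2p')}$. For $\ell\ge1$ this yields only $|V(r)|\lesssim r^{2-d-\ell+d/(2p')}$, and that exponent is negative. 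Your first step also does not supply ``uniqueness of the flux-zero branch''; it only shows that this particular $V$ has zero flux. Until boundedness is settled, the following all rest on an unproved assumption: the existence of $a=V(0)$, formula \eqref{eq:sector-volterra-formula}, the estimates \eqref{eq:sector-volterra-estimates}, and the homogeneous conclusion $a\ne0$.

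The missing ingredient is a two-dimensional solution-space argument, which is how the paper closes the proof.
\begin{enumerate}
\item By linearity $V_a=V_0+a\phi$, where $\phi$ is the regular homogeneous solution with $\phi(0)=1$. Shrink $r_0$ so that $\phi\ne0$ on $[0,r_0]$, and choose $a$ with $V_a(r_0)=V(r_0)$.
\item Then $W:=V-V_a$ solves the homogeneous equation on $(0,r_0]$, has zero flux, and vanishes at $r_0$.
\item The homogeneous solutions on $(0,r_0]$ are spanned by $\phi$ and the reduction-of-order solution $\theta(r)=\phi(r)\int_r^{r_0}s^{1-N}\phi(s)^{-2}\dd s$.
\item For $\theta$ one has $r^{N-1}\theta'(r)=r^{N-1}\phi'(r)\int_r^{r_0}s^{1-N}\phi(s)^{-2}\dd s-\phi(r)^{-1}\to-1$. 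This holds because $r^{N-1}\phi'(r)=O(r^{N-d/p})$, while the integral is $O(r^{2-N})$, or $O(|\log r|)$ if $N=2$.
\item Hence the zero-flux homogeneous solutions are exactly the multiples of $\phi$. So $W=c\phi$ with $c\,\phi(r_0)=0$, i.e.\ $V=V_a$, and the rest of your argument goes through.
\end{enumerate}

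Alternatively, the Wronskian $r^{N-1}(W\phi'-W'\phi)$ is constant on $(0,r_0]$. It tends to $0$ by the zero flux of $W$, the bound $r^{N-1}\phi'=O(r^{N-d/p})$, and the crude growth bound on $V$ coming from $u\in L_d^{2p'}$. This also forces $W=c\phi$.
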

\begin{proof}
After writing $u=r^\ell V$, equation
\eqref{eq:generic-sector-inhomogeneous} becomes
\begin{equation}\label{eq:effective-dimensional-equation}
 -\big(r^{d+2\ell-1}V'\big)'+r^{d+2\ell-1}fV=r^{d+2\ell-1}F.
\end{equation}
Local one-dimensional regularity on every interval $[\delta,r_0]$ shows that
\[
 V\in AC_{\mathrm{loc}}(0,r_0],\qquad
 r^{d+2\ell-1}V'\in AC_{\mathrm{loc}}(0,r_0].
\]
Moreover, the right-hand side of
\begin{equation}\label{eq:effective-flux-equation}
 \big(r^{d+2\ell-1}V'(r)\big)'=r^{d+2\ell-1}\bigl(f(r)V(r)-F(r)\bigr)
\end{equation}
is integrable at the origin.  Indeed, since $u=r^\ell V$ and
$u\in\mathcal H_{\ell,0}\hookrightarrow L_d^{2p'}\hookrightarrow L_d^{p'}$ with $p'=\frac{p}{p-1}$,
\begin{align*}
 \int_0^{r_0}r^{d+2\ell-1}|fV|\dd r
 &=\int_0^{r_0}r^\ell |fu|\dd\mu_d
 \le r_0^\ell\|f\|_{L_d^p(0,r_0)}\|u\|_{L_d^{p'}(0,r_0)},\\
 \int_0^{r_0}r^{d+2\ell-1}|F|\dd r
 &\le r_0^{2\ell}\mu_d(0,r_0)^{1/p'}
       \|F\|_{L_d^p(0,r_0)}.
\end{align*}
Consequently the finite limit
\(C:=\lim_{r\downarrow0}r^{d+2\ell-1}V'(r)\)
exists.  If $C\ne0$, then
$V'(r)=Cr^{1-(d+2\ell)}+o(r^{1-(d+2\ell)})$.  Hence
\[
 V(r)=\begin{cases}
 C\log r+o(|\log r|),&d+2\ell=2,\\[1mm]
 \dfrac{C}{2-(d+2\ell)}r^{2-(d+2\ell)}+o(r^{2-(d+2\ell)}),&d+2\ell>2,
 \end{cases}
 \qquad r\downarrow0.
\]
The corresponding profile $u=r^\ell V$ has infinite form energy near zero,
contradicting $u\in\mathcal H_{\ell,0}$.  Thus $C=0$, and
\eqref{eq:effective-flux-equation} gives
\begin{equation}\label{eq:once-integrated-effective-equation}
 V'(r)=r^{1-(d+2\ell)}\int_0^r t^{d+2\ell-1}\bigl(f(t)V(t)-F(t)\bigr)\dd t,
\end{equation}
which implies \eqref{eq:sector-volterra-formula}.
We next show that the zero-flux solution is the regular Volterra branch.
For each $a\in\mathbb R$, the right-hand side of
\eqref{eq:sector-volterra-formula} defines a contraction on a bounded ball of
$C([0,r_0])$ when $r_0$ is sufficiently small.  It therefore produces a
unique bounded solution $V_a$ satisfying $V_a(0)=a$.  In the homogeneous
case, the regular solution $\phi$ with $\phi(0)=1$ is nonzero on a smaller
interval.  Reduction of order then gives a second solution
\[
 \theta(r)=\phi(r)\int_r^{r_0}\frac{s^{1-(d+2\ell)}}{\phi(s)^2}\dd s,
\]
for which $\lim_{r\downarrow0}r^{d+2\ell-1}\theta'(r)\ne0$.  Hence the kernel of
the flux functional among homogeneous solutions is precisely
$\operatorname{span}\{\phi\}$.  Given the original inhomogeneous solution
$V$, choose $a$ so that $V_a(r_0)=V(r_0)$; this is possible because the map
$a\mapsto V_a(r_0)$ has nonzero linear part $\phi(r_0)$.  The difference
$V-V_a$ is homogeneous, has zero endpoint flux, and vanishes at $r_0$;
therefore it is identically zero.  Thus $V$ is bounded, has the finite limit
$a=V(0)$, and integrating \eqref{eq:once-integrated-effective-equation}
proves \eqref{eq:sector-volterra-formula}.  This regular--singular alternative
is consistent with the fundamental solutions constructed in
\cite[Lemmas~3.2 and~3.8]{KostenkoTeschl2011}.

It remains to record the quantitative estimates.  H\"older's inequality gives
\begin{equation}\label{eq:effective-holder-estimate}
 \int_0^r t^{d+2\ell-1}|f(t)|\dd t
 \le C r^{d+2\ell-d/p}\|f\|_{L_d^p(0,r)},
\end{equation}
and the same estimate holds with $F$ in place of $f$.  Since $V$ is bounded
near zero, \eqref{eq:once-integrated-effective-equation} yields
\[
 |V'(r)|\le C r^{1-d/p}
 \left(\|f\|_{L_d^p(0,r)}\|V\|_{L^\infty(0,r)}
       +\|F\|_{L_d^p(0,r)}\right).
\]
Because $2-d/p>0$, integration proves
\eqref{eq:sector-volterra-estimates}.

Finally, if $F=0$ and $a=0$, the Volterra formula gives
\[
 \|V\|_{L^\infty(0,r)}
 \le C r^{2-d/p}\|f\|_{L_d^p(0,r)}
       \|V\|_{L^\infty(0,r)}.
\]
For sufficiently small $r$ the coefficient on the right is less than one,
so $V=0$ near $0$; uniqueness for the regular equation then gives
$u\equiv0$.  Thus every nonzero homogeneous Friedrichs solution has
$a\ne0$.
\end{proof}

\begin{remark}
The zero-flux condition obtained in the Volterra argument is not an
additional boundary condition.  By the characterization of space
$\mathcal H_{\ell,0}$,
the regular zero-flux solution is precisely the branch belonging to the
Friedrichs form domain.  The second fundamental solution has non-integrable
quadratic-form energy near $r=0$.  Hence the Volterra formula parametrizes
exactly the Friedrichs branch.
\end{remark}

\begin{proposition}\label{prop:sector-spectral-oscillation}
The spectrum of $H_{q,\ell}$ consists of simple eigenvalues
\begin{equation}\label{eq:sector-spectrum}
 \lambda_1(q)<\lambda_2(q)<\cdots,\qquad
 \lambda_m(q)\longrightarrow+\infty.
\end{equation}
An eigenfunction $E_m$ has a continuous representative on $[0,R]$, belongs
to $C^1([a,R])$ for every $a>0$, and satisfies
\begin{equation}\label{eq:sector-endpoint-asymptotics}
 r^{-\ell}E_m(r)\longrightarrow a_m,\qquad
 r^{1-\ell}E_m'(r)\longrightarrow \ell a_m
 \quad(r\downarrow0)
\end{equation}
for some $a_m\neq0$.  The eigenfunction associated with $\lambda_m(q)$ has
exactly $m-1$ simple zeros in $(0,R)$.
\end{proposition}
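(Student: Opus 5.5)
The plan is to combine the abstract form theory already recorded with the endpoint Lemma~\ref{lem:sector-volterra-endpoint}, and then to run a Sturm-type argument on $(0,R]$.

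\emph{Discreteness and divergence.} The form $\mathfrak h_q$ is closed and bounded below by \eqref{eq:form-bound}. By the compact embedding \eqref{eq:sector-compact-embedding}, $\mathcal H_{\ell,0}$ embeds compactly into $L_d^2$. Hence $H_{q,\ell}$ has compact resolvent, and its spectrum is a sequence of eigenvalues of finite multiplicity tending to $+\infty$.

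\emph{Simplicity.} Let $u_1,u_2\in\mathcal H_{\ell,0}$ be eigenfunctions for the same $\lambda$. Each solves \eqref{eq:generic-sector-inhomogeneous} with $f=q-\lambda\in L_d^p$ and $F=0$. By the lemma, $u_j=r^\ell V_j$ with $V_j(0)=a_j\ne0$. Then $a_2u_1-a_1u_2$ is a Friedrichs solution with $V(0)=0$, so the last assertion of the lemma forces it to vanish. Thus every eigenspace is one-dimensional.

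\emph{Regularity and endpoint asymptotics.} On $[a,R]$ with $a>0$, the equation $(r^{d-1}E')'=r^{d-1}(\gamma r^{-2}+q-\lambda)E$ has an integrable right-hand side. Therefore $r^{d-1}E'$ is absolutely continuous, which gives $E\in C^1([a,R])$. Continuity at $0$ comes from $E=r^\ell V$ with $V$ continuous. The derivative asymptotics follow from $E'=\ell r^{\ell-1}V+r^\ell V'$ together with $rV'(r)\to0$ from \eqref{eq:sector-volterra-estimates}:
\[
 r^{1-\ell}E'(r)=\ell V(r)+rV'(r)\longrightarrow \ell a_m .
\]

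\emph{Zero count.} This is the main obstacle, since a Sturm comparison or Prüfer argument is usually initialized with regular Cauchy data at $r=0$. I would use the Prüfer angle for $V$ in the effective equation \eqref{eq:effective-dimensional-equation}, writing $P(r)=r^{d+2\ell-1}$:
\[
 V=\rho\sin\theta,\qquad PV'=\rho\cos\theta .
\]
The eigenfunction and $V_\lambda$ share their zeros in $(0,R)$, since $r^\ell>0$ there.

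1. \emph{Initialization at the singular endpoint.} By the Volterra normalization, $V_\lambda(0)=1$ and $PV_\lambda'\to0$. So we may set $\theta(0)=\pi/2$, and $\theta$ is well defined and continuous in $(r,\lambda)$ on $[0,R]$. This continuity needs the Volterra fixed point on $[0,r_0]$ to depend continuously on $\lambda$, which follows from the contraction being uniform for $\lambda$ in compact sets.
2. \emph{Behaviour at zeros.} At zeros of $V$ we have $\theta'=1/P>0$, so zeros are simple and $\theta$ crosses multiples of $\pi$ only upward.
3. \emph{Monotonicity in $\lambda$.} Via the Lagrange identity with zero endpoint flux (as in the lemma), $\theta(r,\lambda)$ is strictly increasing in $\lambda$ for $r>0$.
4. \emph{Limits in $\lambda$.} We need $\theta(R,\lambda)\to\pi/2$ as $\lambda\to-\infty$ and $\theta(R,\lambda)\to\infty$ as $\lambda\to+\infty$. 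For large $|\lambda|$ this comes from comparison on $[\delta,R]$, combined with smallness on $[0,\delta]$ from the estimate $O(r^{2-d/p})$.

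Eigenvalues are exactly the solutions of $\theta(R,\lambda)\in\pi\mathbb N$. Ordering them gives that the $m$th eigenvalue satisfies $\theta(R)=m\pi$, so $V$ has exactly $m-1$ interior zeros, each simple.

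As a fallback for the count, I would use the variational characterization: the min–max principle gives at most $m-1$ zeros by Courant's argument, using restriction to nodal domains in $\mathcal H_{\ell,0}$. Sturm comparison between consecutive eigenfunctions, with a Wronskian argument whose endpoint term vanishes because $P(V_jV_k'-V_kV_j')\to0$ at $0$ by \eqref{eq:sector-volterra-estimates}, then gives at least $m-1$ zeros.
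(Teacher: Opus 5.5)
Your argument is correct in outline but departs from the paper in two places. For simplicity, the paper uses the weighted Wronskian $r^{d-1}(E_1E_2'-E_1'E_2)$, which is constant and vanishes at the regular endpoint $R$. You work at the singular endpoint instead: $a_2u_1-a_1u_2$ is a Friedrichs solution with $V(0)=0$, so it vanishes by Lemma~\ref{lem:sector-volterra-endpoint}. Both are valid. For the zero count, the paper gives no argument of its own; it cites the singular oscillation theorem in Zettl, with $\int_0^R r|q|\,\mathrm dr<\infty$ as the endpoint hypothesis. Your Prüfer argument on the Volterra branch makes the count self-contained, as does your fallback (a Courant upper bound plus Sturm comparison with vanishing Wronskian at $0$). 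It also shows exactly which endpoint facts enter: zero flux fixes $\theta(0)=\pi/2$, and it removes the boundary term in $\partial_\lambda\theta=\rho^{-2}\int_0^rPV^2$. The paper's citation is shorter, but it relies on matching the Friedrichs condition with the boundary condition in Zettl's singular theory.

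One claim in step~4 is false, though it is inessential. As $\lambda\to-\infty$, $\theta(R,\lambda)\to0$, not $\pi/2$. Already for $q\equiv0$, $V_\lambda$ is a modified Bessel function growing like $e^{\sqrt{|\lambda|}\,r}$, so $PV_\lambda'/V_\lambda\to+\infty$ and $\tan\theta\to0^+$. Your proposed justification through $V=a+O(r^{2-d/p})$ also does not work uniformly. The implied constant contains $\|q-\lambda\|_{L_d^p}$, so the smallness interval $[0,\delta]$ must shrink like $|\lambda|^{-1/2}$.

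What the count actually needs is only that $\theta(R,\lambda)<\pi$ for some $\lambda$. This follows from the form bound. Suppose $V_\lambda(r_0)=0$ with $r_0\in(0,R]$. Then $u=r^\ell V_\lambda\ind_{(0,r_0)}\in\mathcal H_{\ell,0}$ satisfies $\mathfrak h_q[u]=\lambda\|u\|_{L_d^2}^2$, with no boundary term at $0$ by the endpoint asymptotics of Lemma~\ref{lem:sector-volterra-endpoint}. This is impossible when $\lambda$ lies below the uniform lower bound furnished by \eqref{eq:form-bound}. Hence for such $\lambda$ we have $V_\lambda>0$ on $(0,R]$ and $\theta(R,\lambda)\in(0,\pi)$.

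The upper limit $\theta(R,\lambda)\to+\infty$ need not be proved separately. Compact resolvent on the infinite-dimensional space $L_d^2$ already gives infinitely many eigenvalues, so the strictly increasing continuous function $\theta(R,\cdot)$ must reach every $m\pi$. With this repair, or simply through your fallback argument, which is complete as stated, the proof goes through.
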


\begin{proof}
Since the quadratic form is closed and lower semibounded, and its
form domain is compactly embedded into the underlying weighted
\(L^2\)-space, the associated self-adjoint operator has compact
resolvent.
Let \(E\) be a nontrivial eigenfunction corresponding to the angular
momentum \(\ell\), and set
\[
v(r):=r^{(d-1)/2}E(r).
\]
Then the radial eigenvalue equation is transformed into
\[
-v''(r)
+\left(
\frac{\nu^2-\frac14}{r^2}+q(r)
\right)v(r)
=\lambda v(r).
\]
To determine the behavior at the singular endpoint, write \(E(r)=r^\ell V(r)\).
A direct substitution into the radial equation gives
\[
V''(r)+\frac{d+2\ell-1}{r}V'(r)
=(q(r)-\lambda)V(r).
\]
Note that Lemma~\ref{lem:sector-volterra-endpoint} applies with \(f=q-\lambda\) and \(F=0\). Consequently,
there exists \(a_m\in\mathbb R\) such that
\[
V(r)\longrightarrow a_m,
\qquad
rV'(r)\longrightarrow0
\quad\text{as }r\downarrow0.
\]
Moreover, \(a_m\neq0\). Indeed, if \(a_m=0\), the endpoint uniqueness
statement in Lemma~\ref{lem:sector-volterra-endpoint} would imply \(V\equiv0\), and hence
\(E\equiv0\), contradicting the nontriviality of the eigenfunction.
It follows that
\[
r^{-\ell}E(r)=V(r)\longrightarrow a_m,
\]
and, since
\[
E'(r)=\ell r^{\ell-1}V(r)+r^\ell V'(r),
\]
we also have
\[
r^{1-\ell}E'(r)
=\ell V(r)+rV'(r)
\longrightarrow \ell a_m.
\]
This proves \eqref{eq:sector-endpoint-asymptotics} and yields the continuous extension of
\(E\) to the singular endpoint. Finally, standard local regularity
for the radial equation away from \(r=0\) gives
\[
E\in C^1([a,R])
\qquad\text{for every }a\in(0,R).
\]
If two eigenfunctions correspond to the same eigenvalue, their weighted
Wronskian
\[
 r^{d-1}(E_1E_2'-E_1'E_2)
\]
is constant and vanishes at $R$; local uniqueness on every interval
$[a,R]$ implies linear dependence.  Interior zeros are simple by the same
local uniqueness.  Finally, the singular oscillation theorem for separated self-adjoint
Sturm--Liouville problems applies: the coefficients are real, the right
endpoint is regular with the Dirichlet condition, the left endpoint carries
the Friedrichs condition, and \eqref{eq:rq-integrable} supplies the required
Bessel-endpoint integrability.  The oscillation theory and the corresponding
spectral summary are given in \cite[Ch.~6, pp.~131--140; Ch.~10,
pp.~208--211]{Zettl}.  It follows that the eigenfunction associated with the
$m$th eigenvalue has exactly $m-1$ interior zeros.
\end{proof}

\subsubsection{Weak convergence of fixed-sector eigenpairs}

The following compactness observation is useful because weak convergence of the potentials is upgraded, through the compact Sobolev embedding, to uniform convergence of the corresponding multiplication forms on bounded subsets of the energy space.

\begin{lemma}\label{lem:uniform-form-convergence}
Assume \eqref{eq:main-p-assumption} and $q_n\weakto q$ in $L_d^p$.  Then
\begin{equation}\label{eq:uniform-form-convergence}
 \sup_{\substack{\|u\|_{ \mathcal H_{\ell,0}}\le1\\ \|v\|_{ \mathcal H_{\ell,0}}\le1}}
 \left|\int_0^R (q_n-q)u v\dd\mu_d\right|\longrightarrow0.
\end{equation}
\end{lemma}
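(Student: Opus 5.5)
The plan is the standard "weak times compact" argument. By the compact embedding \eqref{eq:sector-compact-embedding}, the closed unit ball $B$ of $\mathcal H_{\ell,0}$ is relatively compact in $L_d^{2p'}$. By H\"older's inequality, the set $K:=\{uv:u,v\in B\}$ is then relatively compact in $L_d^{p'}$, the dual of $L_d^p$. On the other hand, a weakly convergent sequence acts uniformly on relatively compact subsets of the dual. Combining these two facts gives \eqref{eq:uniform-form-convergence}.

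\textbf{Step 1 (compactness of products).} For $u_1,u_2,v_1,v_2\in B$, H\"older's inequality in $L_d^{2p'}$ gives
\[
 \|u_1v_1-u_2v_2\|_{L_d^{p'}}\le \|u_1-u_2\|_{L_d^{2p'}}\|v_1\|_{L_d^{2p'}}+\|u_2\|_{L_d^{2p'}}\|v_1-v_2\|_{L_d^{2p'}}.
\]
Let $C_0$ denote the norm of the embedding, so that $\|w\|_{L_d^{2p'}}\le C_0$ on $B$. Fix $\eta>0$. Since $B$ is relatively compact in $L_d^{2p'}$, it has a finite $\eta$-net $\{w_1,\dots,w_N\}\subset L_d^{2p'}$. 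Then $\{w_jw_k\}$ is a finite $C\eta$-net for $K$ in $L_d^{p'}$, so $K$ is totally bounded there.

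\textbf{Step 2 (uniformity).} Weakly convergent sequences are bounded, so $M:=\sup_n\|q_n-q\|_{L_d^p}<\infty$. For any $u,v\in B$, choose $j,k$ with $\|uv-w_jw_k\|_{L_d^{p'}}\le C\eta$. Then
\[
 \Bigl|\int_0^R(q_n-q)uv\dd\mu_d\Bigr|\le MC\eta+\max_{j,k}\Bigl|\int_0^R(q_n-q)w_jw_k\dd\mu_d\Bigr|.
\]
Each $w_jw_k$ lies in $L_d^{p'}=(L_d^p)^*$, since $1<p<\infty$. Hence each of the finitely many terms in the maximum tends to $0$ as $n\to\infty$. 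Taking $\limsup_{n\to\infty}$ and then letting $\eta\downarrow0$ proves \eqref{eq:uniform-form-convergence}.

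\textbf{Main obstacle.} The only nontrivial input is the compact embedding \eqref{eq:sector-compact-embedding}, which the paper has already asserted. If its justification needs to be spelled out, I would argue as follows. For fixed $\ell$, the map $u\mapsto u(|x|)Y_{\ell,k}(x/|x|)$ is an isometry into $H_0^1(B_R)$ by \eqref{eq:sector-isometry}. The embedding $H_0^1(B_R)\hookrightarrow L^{2p'}(B_R)$ is compact because $2p'<2^*=2d/(d-2)$; this inequality is equivalent to $p>d/2$, and when $d=2$ every finite exponent is allowed. The $L^{2p'}(B_R)$ norm of the separated function is comparable to $\|u\|_{L_d^{2p'}}\|Y_{\ell,k}\|_{L^{2p'}(\mathbb S^{d-1})}$. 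Since $Y_{\ell,k}\not\equiv0$, compactness transfers to the profile space.
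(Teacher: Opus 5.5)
Your proposal is correct and follows the same route as the paper: the compact embedding \eqref{eq:sector-compact-embedding} plus H\"older makes the product set $\{uv\}$ relatively compact in $L_d^{p'}$, and the bounded, pointwise-null functionals $f\mapsto\int_0^R(q_n-q)f\dd\mu_d$ then converge to zero uniformly on that set. Your finite-net argument in Step~2 simply writes out the standard proof of the uniform-convergence-on-compacts fact that the paper quotes. Your justification of the compact embedding is also sound: $2p'<2d/(d-2)$ is equivalent to $p>d/2$, and the $L^{2p'}(B_R)$ norm of $u(|x|)Y_{\ell,k}(x/|x|)$ factors exactly in polar coordinates.
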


\begin{proof}
By the compact embedding \eqref{eq:sector-compact-embedding}, $ \mathcal H_{\ell,0}\hookrightarrow L_d^{2p'}$.  Hence the set
\[
 \mathcal K:=\{uv:\ \|u\|_{ \mathcal H_{\ell,0}}\le1,\ \|v\|_{ \mathcal H_{\ell,0}}\le1\}
\]
is relatively compact in $L_d^{p'}$: indeed, the product map
$L_d^{2p'}\times L_d^{2p'}\to L_d^{p'}$ is continuous, and the image of the product of two relatively compact sets is relatively compact.  The functionals
\[
 \ell_n(f):=\int_0^R(q_n-q)f\dd\mu_d
\]
are uniformly bounded on $L_d^{p'}$ and converge pointwise to zero.
 A uniformly bounded sequence of linear functionals which converges pointwise on a Banach space converges
 uniformly on every compact subset.  Applying this to the closure of $\mathcal K$ proves \eqref{eq:uniform-form-convergence}.
\end{proof}

We next record the compactness properties of fixed-sector eigenpairs used below.
\begin{proposition}\label{prop:eigenpair-compactness}
Assume \eqref{eq:main-p-assumption} and let $q_n\weakto q$ in $L_d^p$.  Fix $m\ge1$ and choose normalized eigenfunctions $E_{m,n}:=E_m(\cdot;q_n)$ so that, after discarding finitely many terms if necessary, $\langle E_{m,n},E_m(\cdot;q)\rangle_{L_d^2}\ge0$.  Then
\begin{align}
 \lambda_m(q_n)&\longrightarrow\lambda_m(q),\\
 E_{m,n}&\longrightarrow E_m(\cdot;q)\quad\text{strongly in } \mathcal H_{\ell,0},\label{eq:H1-conv}
\end{align}
and, for every $a\in(0,R)$,
\begin{equation}\label{eq:C1-conv}
 E_{m,n}\longrightarrow E_m(\cdot;q)
 \quad\text{in }C^1([a,R]).
\end{equation}
\end{proposition}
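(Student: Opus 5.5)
The plan is to combine min--max characterization with Lemma~\ref{lem:uniform-form-convergence}, and then to upgrade the convergence using the Volterra representation of Lemma~\ref{lem:sector-volterra-endpoint}. Since $q_n\weakto q$, the norms $\|q_n\|_{L_d^p}\le M$ are uniformly bounded. The form bound \eqref{eq:form-bound} then gives uniform lower bounds and uniform coercivity: $\mathfrak h_{q_n}[u]+C\|u\|^2_{L_d^2}\ge c\|u\|^2_{\mathcal H_{\ell,0}}$ with $c,C$ independent of $n$.

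\textbf{Eigenvalues.} I use the Courant--Fischer formula $\lambda_m(q)=\min_{\dim W=m}\max_{u\in W,\|u\|_{L^2_d}=1}\mathfrak h_q[u]$, with $W\subset\mathcal H_{\ell,0}$. Restricting to $u$ with $\mathfrak h_q[u]\le\Lambda$ gives $\|u\|_{\mathcal H_{\ell,0}}$ bounded, by coercivity. Lemma~\ref{lem:uniform-form-convergence} then gives $\sup|\mathfrak h_{q_n}[u]-\mathfrak h_q[u]|\to0$ on such sets. Testing with the span of the first $m$ eigenfunctions of $q$ gives $\limsup\lambda_m(q_n)\le\lambda_m(q)$. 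The symmetric argument, using the spans for $q_n$ (bounded in $\mathcal H_{\ell,0}$ by the limsup bound and coercivity), gives $\liminf\ge$.

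\textbf{Eigenfunctions in $\mathcal H_{\ell,0}$.} The $E_{m,n}$ are bounded in $\mathcal H_{\ell,0}$, so a subsequence converges weakly in $\mathcal H_{\ell,0}$ and strongly in $L_d^2$ and $L_d^{2p'}$ to some $E$ with $\|E\|_{L_d^2}=1$. Passing to the limit in $\mathfrak h_{q_n}[E_{m,n},\varphi]=\lambda_m(q_n)\langle E_{m,n},\varphi\rangle$, using $\int q_nE_{m,n}\varphi-\int qE\varphi\to0$ (Lemma~\ref{lem:uniform-form-convergence} plus strong $L^{2p'}$ convergence), shows that $E$ is an eigenfunction for $\lambda_m(q)$. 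By simplicity, $E=\pm E_m(\cdot;q)$, and the sign normalization forces $+$. Strong convergence follows from convergence of norms: $\|E_{m,n}'\|^2+\gamma\|r^{-1}E_{m,n}\|^2=\lambda_m(q_n)-\int q_nE_{m,n}^2\to\lambda_m(q)-\int qE^2$, again by Lemma~\ref{lem:uniform-form-convergence}. Uniqueness of the limit upgrades this to the full sequence.

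\textbf{$C^1([a,R])$ convergence.} This is the step I expect to need the most care. I would write the equation on $[a,R]$ in flux form, $(r^{d-1}E_{m,n}')'=r^{d-1}(\gamma r^{-2}+q_n-\lambda_m(q_n))E_{m,n}$. By the one-dimensional embedding on $[a,R]$, where the weight is harmless, $\mathcal H_{\ell,0}$ convergence gives uniform convergence of $E_{m,n}$ on $[a,R]$. Integrating the flux equation from $R$ gives
\[
r^{d-1}E_{m,n}'(r)=R^{d-1}E_{m,n}'(R)-\int_r^R t^{d-1}\bigl(\gamma t^{-2}+q_n-\lambda_m(q_n)\bigr)E_{m,n}\dd t .
\]
The integral converges uniformly in $r$, because $q_n\weakto q$ tested against $\ind_{[r,R]}E$ is equicontinuous in $r$: the family $\{\ind_{[r,R]}E\}$ is compact in $L_d^{p'}$, and $q_n(E_{m,n}-E)\to0$ in $L^1$. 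To control the boundary flux $E_{m,n}'(R)$, I would integrate the identity once more over $[a,R]$ and use $E_{m,n}(R)=0$ together with the convergence of $E_{m,n}(a)$, which determines $E_{m,n}'(R)$ as a convergent combination. This yields uniform convergence of $E_{m,n}'$ on $[a,R]$.
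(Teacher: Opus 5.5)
Your proof is correct. The eigenvalue and $\mathcal H_{\ell,0}$ steps follow the paper's argument: form-norm convergence from Lemma~\ref{lem:uniform-form-convergence} combined with min--max, a weak subsequential limit identified by simplicity and the sign choice, and strong convergence from convergence of the form energies. You additionally spell out the two-sided Courant--Fischer comparison, which the paper compresses into one sentence.

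The genuine difference is in the $C^1([a,R])$ step. The paper argues by compactness. The uniform $L^\infty(a,R)$ bound and the equation place $E_{m,n}$ in a bounded set of $W^{2,s}(a,R)$, $s=\min\{p,2\}$. The compact embedding $W^{2,s}(a,R)\hookrightarrow C^1([a,R])$ then gives convergent subsequences, and the already identified limit upgrades this to the whole sequence. At this stage the paper only needs $\sup_n\|q_n\|_{L_d^p}<\infty$, and it avoids any boundary-flux bookkeeping.

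You instead prove convergence of $r^{d-1}E_{m,n}'$ directly from the integrated flux equation. You use weak convergence of $q_n$ uniformly on the compact family $\{\ind_{[r,R]}E\}_{r\in[a,R]}\subset L_d^{p'}$, and you recover $E_{m,n}'(R)$ from a second integration using $E_{m,n}(R)=0$ and $E_{m,n}(a)\to E(a)$. This is more explicit: it gives a representation formula and uniform control of the derivative without passing to subsequences. The price is the extra step of determining the boundary flux.

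Both arguments are complete. The Volterra lemma you announce in your plan is never actually used, since everything happens on $[a,R]$, away from the singular endpoint.
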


\begin{proof}
By Lemma~\ref{lem:uniform-form-convergence}, the closed forms $\mathfrak h_{q_n}$ converge to $\mathfrak h_q$ in form norm on the common domain $ \mathcal H_{\ell,0}$.  The uniform lower bound furnished by \eqref{eq:form-bound} and the min--max principle therefore give
\[
 \lambda_j(q_n)\longrightarrow\lambda_j(q),\qquad 1\le j\le m.
\]
The eigenvalue identity and the uniform form bound show that $E_{m,n}$ is bounded in $ \mathcal H_{\ell,0}$.  Along any subsequence,
\[
 E_{m,n_k}\weakto E_*\text{ in } \mathcal H_{\ell,0},
 \qquad
 E_{m,n_k}\to E_*\text{ in }L_d^{2p'}\cap L_d^2.
\]
The strong $L_d^2$ convergence preserves the normalization.  For every $\phi\in \mathcal H_{\ell,0}$,
\begin{align*}
 &\int_0^R q_{n_k}E_{m,n_k}\phi\dd\mu_d
 -\int_0^R qE_*\phi\dd\mu_d\\
 &\quad=
 \int_0^R q_{n_k}(E_{m,n_k}-E_*)\phi\dd\mu_d
 +\int_0^R(q_{n_k}-q)E_*\phi\dd\mu_d\longrightarrow0.
\end{align*}
Thus $E_*$ is a normalized eigenfunction of $H_{q,\ell}$ associated with $\lambda_m(q)$.  Fixed-sector simplicity implies $E_*=\pm E_m(\cdot;q)$.  After the prescribed sign choice, every subsequence has the same limit.
Moreover,
\begin{align*}
 \int_0^R q_nE_{m,n}^2\dd\mu_d-
 \int_0^R qE_m^2\dd\mu_d
 &=\int_0^R q_n(E_{m,n}^2-E_m^2)\dd\mu_d\\
 &\quad+\int_0^R(q_n-q)E_m^2\dd\mu_d\longrightarrow0.
\end{align*}
Together with the convergence of the eigenvalues and the normalization, this yields convergence of the fixed-sector form energies.  Weak convergence in $ \mathcal H_{\ell,0}$ plus convergence of the $ \mathcal H_{\ell,0}$ norms proves \eqref{eq:H1-conv}.

Fix $a>0$ and put $s=\min\{p,2\}>1$.  On $[a,R]$ the fixed-sector equation is nondegenerate.  The strong $H^1(a,R)$ convergence gives a uniform $L^\infty(a,R)$ bound, while
\[
 E_{m,n}''=-\frac{d-1}{r}E_{m,n}'+\left(\frac{\gamma}{r^2}+q_n-\lambda_m(q_n)\right)E_{m,n}
\]
shows that $E_{m,n}$ is bounded in $W^{2,s}(a,R)$.  Since
$W^{2,s}(a,R)\hookrightarrow C^1([a,R])$ compactly, every subsequence has a further subsequence converging in $C^1([a,R])$ to $E_m(\cdot;q)$.  Uniqueness of the limit proves \eqref{eq:C1-conv} for the full sequence.
\end{proof}

\subsection{Fixed-sector nodes as nonlinear functionals}

Let $T=T_{i,m}(q)$ and normalize $E=E_m(\cdot;q)$ by
\begin{equation}\label{eq:normalization}
 \int_0^R E(r)^2\dd\mu_d(r)=1.
\end{equation}
Set
\begin{align}\label{eq:A-B-Gamma}
 A=A_{i,m}(q)&:=\int_0^T E^2\dd\mu_d,
 &B=B_{i,m}(q)&:=\int_T^R E^2\dd\mu_d=1-A,\\
 \Gamma=\Gamma_{i,m}(q)&:=T^{d-1}E'(T)^2.\notag
\end{align}
Both $A$ and $B$ are positive, and $\Gamma>0$ because every interior fixed-sector node is simple.

\begin{proposition}\label{prop:eigenpair-differentiability}
Fix $q\in L_d^p$ and a normalized fixed-sector eigenpair
$(\lambda,E)=(\lambda_m(q),E_m(\cdot;q))$.  There exists a neighborhood $\mathcal U$ of $q$ in $L_d^p$ and a locally sign-fixed choice
\[
 \widetilde q\longmapsto
 (\lambda_m(\widetilde q),E_m(\cdot;\widetilde q))
 \in\R\times \mathcal H_{\ell,0}
\]
which is continuously Fr\'echet differentiable.  For every $a\in(0,R)$ and $s=\min\{p,2\}>1$, the same map is continuously Fr\'echet differentiable with values in $\R\times W^{2,s}(a,R)$, and hence with values in $\R\times C^1([a,R])$.
For $h\in L_d^p$, write
\begin{equation}\label{eq:Zdef}
 \dot\lambda=D\lambda_m(q)[h],\qquad Z=DE_m(q)[h].
\end{equation}
Then
\begin{equation}\label{eq:Hellmann-Feynman}
 \dot\lambda=\int_0^R h(r)E(r)^2\dd\mu_d(r),
\end{equation}
and $Z$ is the unique solution orthogonal to $E$ of
\begin{equation}\label{eq:linearized-eigenfunction}
 \begin{cases}
 -\big(r^{d-1}Z'\big)'+\gamma r^{d-3}Z+r^{d-1}(q-\lambda)Z
 =r^{d-1}(\dot\lambda-h)E,\\
 Z\text{ satisfies the Friedrichs condition at }0,\qquad Z(R)=0,\\
 \displaystyle \int_0^R ZE\dd\mu_d=0.
 \end{cases}
\end{equation}
\end{proposition}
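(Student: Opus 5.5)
We apply the implicit function theorem to the eigenvalue problem in the operator formulation. Let $\mathcal H:=\mathcal H_{\ell,0}$ with dual $\mathcal H^*$, and let $J:\mathcal H\to\mathcal H^*$ be the Riesz-type map induced by the $L_d^2$ pairing, $\langle Ju,\phi\rangle=\int_0^R u\phi\dd\mu_d$. Define
\[
 \Phi:L_d^p\times\R\times\mathcal H\to\mathcal H^*\times\R,\qquad
 \Phi(\widetilde q,\mu,w):=\Bigl(\mathfrak h_{\widetilde q}[w,\cdot]-\mu\,\langle Jw,\cdot\rangle,\ \int_0^R w^2\dd\mu_d-1\Bigr).
\]

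\textbf{Smoothness of $\Phi$.} The map $\Phi$ is polynomial in $(\widetilde q,\mu,w)$. The only nontrivial term is the trilinear map $(\widetilde q,w,\phi)\mapsto\int\widetilde q w\phi\dd\mu_d$. By H\"older's inequality and \eqref{eq:sector-compact-embedding}, this map is bounded by $\|\widetilde q\|_{L_d^p}\|w\|_{L_d^{2p'}}\|\phi\|_{L_d^{2p'}}$. A bounded multilinear map is $C^\infty$, so $\Phi$ is $C^\infty$, and $\Phi(q,\lambda,E)=0$.

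\textbf{Invertibility of the partial derivative.} The derivative in $(\mu,w)$ at $(q,\lambda,E)$ is
\[
 L(\dot\mu,Z)=\Bigl(\mathfrak h_q[Z,\cdot]-\lambda\langle JZ,\cdot\rangle-\dot\mu\langle JE,\cdot\rangle,\ 2\int ZE\dd\mu_d\Bigr).
\]
Let $A:=\mathfrak h_q-\lambda(\cdot,\cdot)_{L_d^2}:\mathcal H\to\mathcal H^*$. By \eqref{eq:form-bound}, $A+c J$ is coercive for large $c$, and $J$ is compact by the compact embedding, so $A$ is Fredholm of index zero. Simplicity from Proposition~\ref{prop:sector-spectral-oscillation} gives $\ker A=\operatorname{span}\{E\}$, and $\operatorname{Ran}A=\{g:\langle g,E\rangle=0\}$ by self-adjointness. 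A standard bordering argument then shows $L$ is an isomorphism:
\begin{itemize}
\item[(i)] If $L(\dot\mu,Z)=0$, pairing the first component with $E$ gives $\dot\mu=0$. Hence $Z\in\ker A$, and the orthogonality condition forces $Z=0$.
\item[(ii)] For surjectivity, given $(g,c)$, choose $\dot\mu=-\langle g,E\rangle$. Then solve $AZ=g+\dot\mu JE$, which lies in the range, and adjust $Z$ by a multiple of $E$ to meet the scalar constraint.
\end{itemize}
The implicit function theorem then yields a $C^1$ (indeed $C^\infty$) map $\widetilde q\mapsto(\mu(\widetilde q),w(\widetilde q))$ near $q$.

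\textbf{Identification with the $m$th eigenpair.} Since $w(\widetilde q)$ is a normalized eigenfunction with eigenvalue $\mu(\widetilde q)$, we must check that this is the $m$th one. By Proposition~\ref{prop:eigenpair-compactness} (strong convergence implies weak convergence), $\lambda_j(\widetilde q)$ depends continuously on $\widetilde q$, and the eigenvalues are simple and separated. Hence for $\widetilde q$ close to $q$, $\mu(\widetilde q)$ lies in an isolated window containing only $\lambda_m(\widetilde q)$, so $\mu=\lambda_m$. Moreover $w$ is close to $E$, so it is the sign choice with $\langle w,E\rangle>0$; this is the locally sign-fixed branch.

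\textbf{Derivative formulas.} Differentiating $\Phi(\widetilde q,\mu,w)=0$ in the direction $h$ gives
\[
 A Z=\dot\lambda JE-\int hE\,\cdot,\qquad \int ZE\dd\mu_d=0.
\]
Testing the first identity with $E$ and using $AE=0$ gives $\dot\lambda=\int hE^2\dd\mu_d$, which is \eqref{eq:Hellmann-Feynman}. The weak equation is exactly \eqref{eq:linearized-eigenfunction} in divergence form. Since $Z\in\mathcal H_{\ell,0}$, it satisfies the Friedrichs condition at $0$ and $Z(R)=0$. Uniqueness follows from $\ker A=\operatorname{span}\{E\}$ together with the orthogonality condition. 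For the endpoint branch description one can also invoke Lemma~\ref{lem:sector-volterra-endpoint} with $f=q-\lambda$ and $F=r^{-\ell}(\dot\lambda-h)E=(\dot\lambda-h)V\in L_d^p$, but it is not needed for the statement.

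\textbf{Upgrade to $W^{2,s}(a,R)$.} This is the step I expect to require the most care. Write the equation on $[a,R]$ as
\[
 w''=-\frac{d-1}{r}w'+\Bigl(\frac{\gamma}{r^2}+\widetilde q-\mu\Bigr)w.
\]
The map $(\widetilde q,\mu,w)\mapsto w''$ is $C^1$ from $L_d^p\times\R\times H^1(a,R)$ to $L^s(a,R)$. This uses that $H^1(a,R)\subset L^\infty$ and that the product $L^p\times L^\infty\to L^s$ is bounded, with $s=\min\{p,2\}$ so that the term $w'\in L^2$ is also controlled. Moreover, restriction $\mathcal H_{\ell,0}\to H^1(a,R)$ is bounded because $r\ge a$ there. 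Composing these maps gives $C^1$ dependence in $W^{2,s}(a,R)$. The embedding $W^{2,s}(a,R)\hookrightarrow C^1([a,R])$ then gives the final claim.
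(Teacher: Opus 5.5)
Your proposal is correct and follows essentially the same route as the paper: an implicit function theorem in $\mathcal H_{\ell,0}$ with a Fredholm-index-zero bordering argument, identification of the branch with the sign-fixed $m$th eigenpair, and the Hellmann--Feynman formula by testing the linearized equation against $E$. Your $W^{2,s}(a,R)$ upgrade packages the paper's explicit remainder and difference-equation estimates as a composition with the polynomial map $(\widetilde q,\mu,w)\mapsto -\frac{d-1}{r}w'+(\frac{\gamma}{r^2}+\widetilde q-\mu)w$. This shortcut is legitimate once you note that $C^1$ dependence of $w$ in $W^{1,s}(a,R)$, together with $C^1$ dependence of $w''$ in $L^s(a,R)$, gives $C^1$ dependence in $W^{2,s}(a,R)$.
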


\begin{proof}
Let $J:\mathcal H_{\ell,0}\to(\mathcal H_{\ell,0})^*$ denote the natural
$L_d^2$ embedding,
\[
 \langle Ju,\phi\rangle=\langle u,\phi\rangle_{L_d^2},
 \qquad u,\phi\in\mathcal H_{\ell,0}.
\]
For $\widetilde q\in L_d^p$, let $M_{\widetilde q}:\mathcal H_{\ell,0}
\to(\mathcal H_{\ell,0})^*$ be the multiplication-form operator
\[
 \langle M_{\widetilde q}u,\phi\rangle
 :=\int_0^R\widetilde q\,u\phi\dd\mu_d.
\]
Thus, if $A_0$ denotes the free fixed-sector form operator, then
$A_{\widetilde q}=A_0+M_{\widetilde q}$ and
\(\langle A_{\widetilde q}u,\phi\rangle
 =\mathfrak h_{\widetilde q}[u,\phi].\)
By H\"older's inequality and
$\mathcal H_{\ell,0}\hookrightarrow L_d^{2p'}$,
\[
 \|M_{\widetilde q}u\|_{(\mathcal H_{\ell,0})^*}
 \le C\|\widetilde q\|_{L_d^p}\|u\|_{\mathcal H_{\ell,0}}.
\]
Hence $(\widetilde q,u)\mapsto M_{\widetilde q}u$ is a continuous
bilinear map. Define
\[
 \mathscr G(\widetilde q,u,\eta)
 :=\left(A_{\widetilde q}u-\eta Ju,
 \frac12\big(\|u\|_{L_d^2}^2-1\big)\right).
\]
Then
\[
 \mathscr G:L_d^p\times\mathcal H_{\ell,0}\times\R
 \longrightarrow(\mathcal H_{\ell,0})^*\times\R
\]
is continuously Fr\'echet differentiable.  At the normalized eigenpair
$(q,E,\lambda)$, its derivative with respect to $(u,\eta)$ is
\[
 \mathcal L(Z,\xi)
 :=D_{(u,\eta)}\mathscr G(q,E,\lambda)[Z,\xi]
 =\left((A_q-\lambda J)Z-\xi JE,
 \langle Z,E\rangle_{L_d^2}\right).
\]
We prove that $\mathcal L$ is an isomorphism. Set
\[
 X:=\{Z\in\mathcal H_{\ell,0}:\langle Z,E\rangle_{L_d^2}=0\},
 \qquad
 Y:=\{F\in(\mathcal H_{\ell,0})^*:F(E)=0\}.
\]
We first claim that
\begin{equation}\label{eq:reduced-isomorphism}
 A_q-\lambda J:X\longrightarrow Y
\end{equation}
is a bounded isomorphism.  By the uniform form bound, one may choose
$c>0$ so large that
$A_q+cJ:\mathcal H_{\ell,0}\to(\mathcal H_{\ell,0})^*$ is coercive;
hence it is an isomorphism by the Lax--Milgram theorem.  The map $J$ is
compact, since it factors through the compact embedding
$\mathcal H_{\ell,0}\hookrightarrow L_d^2$.  Therefore
\[
 A_q-\lambda J=(A_q+cJ)-(\lambda+c)J
\]
is Fredholm of index zero.  Since $\lambda$ is a simple fixed-sector
eigenvalue,
\[
 \ker(A_q-\lambda J)=\operatorname{span}\{E\}.
\]
Moreover, symmetry gives
\[
 \langle(A_q-\lambda J)Z,E\rangle
 =\langle Z,(A_q-\lambda J)E\rangle=0
\]
for all $Z\in \mathcal H_{\ell,0}$,
so $\operatorname{Ran}(A_q-\lambda J)\subset Y$.  The Fredholm index is
zero and the kernel is one-dimensional, hence the range has codimension
one.  Since $Y$ also has codimension one, it follows that
\[
 \operatorname{Ran}(A_q-\lambda J)=Y.
\]
Restricting the domain to $X$ removes the kernel; surjectivity onto $Y$
follows by subtracting the $E$-component of any preimage.  This proves
\eqref{eq:reduced-isomorphism}.  The bounded inverse theorem therefore gives
\begin{equation}\label{eq:reduced-resolvent-bound}
 \|Z\|_{\mathcal H_{\ell,0}}
 \le C\|(A_q-\lambda J)Z\|_{(\mathcal H_{\ell,0})^*},
 \qquad Z\in X.
\end{equation}
To prove invertibility of the full linearization, let
$(F,a)\in(\mathcal H_{\ell,0})^*\times\R$ be given and solve
\[
 (A_q-\lambda J)Z-\xi JE=F,
 \qquad
 \langle Z,E\rangle_{L_d^2}=a.
\]
Since $\|E\|_{L_d^2}=1$, write $Z=aE+Z_\perp$ with $Z_\perp\in X$.
Pairing the first equation with $E$ yields
\(-\xi=F(E)\),
and therefore $F+\xi JE\in Y$.  By \eqref{eq:reduced-isomorphism},
\[
 Z_\perp
 =\big((A_q-\lambda J)|_X\big)^{-1}(F+\xi JE)
\]
is uniquely determined.  Furthermore,
\[
 |\xi|\le C\|F\|_{(\mathcal H_{\ell,0})^*},
 \qquad
 \|Z_\perp\|_{\mathcal H_{\ell,0}}
 \le C\|F\|_{(\mathcal H_{\ell,0})^*},
\]
so
\[
 \|Z\|_{\mathcal H_{\ell,0}}+|\xi|
 \le C\big(\|F\|_{(\mathcal H_{\ell,0})^*}+|a|\big).
\]
Thus $\mathcal L$ is a bounded isomorphism.
The Banach-space implicit function theorem now gives neighborhoods of $q$
and $(E,\lambda)$ and a unique continuously Fr\'echet differentiable local
branch
\[
 \widetilde q\longmapsto (u(\widetilde q),\eta(\widetilde q))
\]
of normalized eigenpairs satisfying
$\mathscr G(\widetilde q,u(\widetilde q),\eta(\widetilde q))=0$ and
$(u(q),\eta(q))=(E,\lambda)$.  By
Proposition~\ref{prop:eigenpair-compactness}, the locally sign-fixed
$m$th normalized eigenpair converges to $(E,\lambda)$ in
$\mathcal H_{\ell,0}\times\R$ as $\widetilde q\to q$ in $L_d^p$.
After shrinking the neighborhood if necessary and choosing the sign by
$\langle E_m(\cdot;\widetilde q),E\rangle_{L_d^2}>0$, this eigenpair lies
in the implicit-function neighborhood.  The local uniqueness in the
implicit function theorem therefore identifies the branch with
\[
 (u(\widetilde q),\eta(\widetilde q))
 =(E_m(\cdot;\widetilde q),\lambda_m(\widetilde q)).
\]
This proves the asserted $C^1$ dependence with values in
$\mathcal H_{\ell,0}\times\R$.
Let $h\in L_d^p$ and set
$\dot\lambda=D\lambda_m(q)[h]$ and $Z=DE_m(q)[h]$.  Differentiating the
identity $\mathscr G(q,E,\lambda)=0$ in the direction $h$ gives
\begin{equation}\label{eq:linearized-form-equation}
 (A_q-\lambda J)Z=(\dot\lambda J-M_h)E,
 \qquad
 \langle Z,E\rangle_{L_d^2}=0.
\end{equation}
Pairing the first equation with $E$ and using symmetry and
$(A_q-\lambda J)E=0$ yields
\[
 0=\dot\lambda\|E\|_{L_d^2}^2
   -\int_0^R hE^2\dd\mu_d,
\]
which proves \eqref{eq:Hellmann-Feynman}.  Equation
\eqref{eq:linearized-form-equation}, written in radial weak form, is exactly
\eqref{eq:linearized-eigenfunction}.  Since $Z\in\mathcal H_{\ell,0}$,
the Friedrichs condition at $0$ and the Dirichlet condition at $R$ are
already encoded in the form domain.  Finally, the right-hand side of
\eqref{eq:linearized-form-equation} belongs to $Y$ by
\eqref{eq:Hellmann-Feynman}, so \eqref{eq:reduced-isomorphism} gives the
claimed uniqueness of the solution orthogonal to $E$.

It remains to prove differentiability in the stronger local topology.  Fix
$a\in(0,R)$, put $b=a/2$, and let $s=\min\{p,2\}>1$.  On $[b,R]$ the
weighted and unweighted Lebesgue norms are equivalent, and
$\mathcal H_{\ell,0}$ controls the usual $H^1(b,R)$ norm.  In particular,
$H^1(b,R)\hookrightarrow L^\infty(b,R)$.  From the boundedness of the
Fr\'echet derivative in $\mathcal H_{\ell,0}\times\R$,
\[
 \|Z\|_{\mathcal H_{\ell,0}}+|\dot\lambda|
 \le C\|h\|_{L_d^p}.
\]
On $[b,R]$, \eqref{eq:linearized-eigenfunction} gives, in the
distributional sense,
\[
 Z''=-\frac{d-1}{r}Z'
 +\left(\frac{\gamma}{r^2}+q-\lambda\right)Z
 -(\dot\lambda-h)E.
\]
Here $Z,E\in H^1(b,R)\subset L^\infty(b,R)$, $q,h\in L^p(b,R)$,
and $s\le p,2$.  Hence every term on the right belongs to $L^s(b,R)$ and
\begin{equation}\label{eq:local-derivative-bound}
 \|Z\|_{W^{2,s}(a,R)}\le C_a\|h\|_{L_d^p}.
\end{equation}
Thus the derivative takes values in $W^{2,s}(a,R)$.

We next prove Fr\'echet differentiability in this topology.  For
$h\to0$ in $L_d^p$, write
\[
 E_h=E_m(\cdot;q+h),\qquad
 \lambda_h=\lambda_m(q+h),\qquad
 R_h=E_h-E-Z,
\]
and
\[
 \rho_h=\lambda_h-\lambda-\dot\lambda.
\]
The already established differentiability in
$\mathcal H_{\ell,0}\times\R$ gives
\begin{equation}\label{eq:H-remainder-small}
 \|R_h\|_{\mathcal H_{\ell,0}}=o(\|h\|_{L_d^p}),
 \qquad
 |\rho_h|=o(\|h\|_{L_d^p}).
\end{equation}
Subtracting the equations for $E_h$, $E$, and $Z$ gives
\begin{equation}\label{eq:eigenpair-remainder}
 (A_q-\lambda J)R_h
 =\rho_hJE+(\dot\lambda+\rho_h)J(E_h-E)-M_h(E_h-E).
\end{equation}
Equivalently, on $[b,R]$,
\[
 -R_h''-\frac{d-1}{r}R_h'
 +\left(\frac{\gamma}{r^2}+q-\lambda\right)R_h
 =F_h,
\]
where
\[
 F_h:=\rho_hE+(\dot\lambda+\rho_h-h)(E_h-E).
\]
Since the $\mathcal H_{\ell,0}$-valued eigenpair map is differentiable,
\[
 \|E_h-E\|_{H^1(b,R)}=O(\|h\|_{L_d^p}),
 \qquad
 \|R_h\|_{H^1(b,R)}=o(\|h\|_{L_d^p}).
\]
The one-dimensional Sobolev embedding therefore gives
\[
 \|E_h-E\|_{L^\infty(b,R)}=O(\|h\|_{L_d^p}),
 \qquad
 \|R_h\|_{L^\infty(b,R)}=o(\|h\|_{L_d^p}).
\]
Together with $|\dot\lambda|=O(\|h\|_{L_d^p})$ and
\eqref{eq:H-remainder-small}, this implies \(\|F_h\|_{L^s(b,R)}=o(\|h\|_{L_d^p})\).
Moreover,
\[
 \|R_h'\|_{L^s(b,R)}+\|R_h\|_{L^s(b,R)}
 =o(\|h\|_{L_d^p}),
\]
and, since $q\in L^p(b,R)$ and $R_h\in L^\infty(b,R)$, \(\|qR_h\|_{L^s(b,R)}=o(\|h\|_{L_d^p})\).
The differential equation therefore yields directly
\[
 \|R_h''\|_{L^s(b,R)}=o(\|h\|_{L_d^p}),
\]
so
\begin{equation}\label{eq:local-W2s-remainder}
 \|R_h\|_{W^{2,s}(a,R)}=o(\|h\|_{L_d^p}).
\end{equation}
This proves Fr\'echet differentiability of the eigenfunction map with values
in $W^{2,s}(a,R)$.

It remains only to prove continuity of this derivative.  Let $q_n\to q$ in
$L_d^p$, choose the same local sign convention, and write
\[
 E_n=E_m(\cdot;q_n),\qquad \lambda_n=\lambda_m(q_n),
\]
and, for $\|h\|_{L_d^p}\le1$,
\[
 Z_n=DE_m(q_n)[h],\qquad Z=DE_m(q)[h],
\]
\[
 \dot\lambda_n=\int_0^RhE_n^2\dd\mu_d,
 \qquad
 \dot\lambda=\int_0^RhE^2\dd\mu_d.
\]
The $C^1$ dependence already obtained by the implicit function theorem in $\mathcal H_{\ell,0}$ gives
\begin{equation}\label{eq:H-derivative-continuity}
 \sup_{\|h\|_{L_d^p}\le1}
 \|Z_n-Z\|_{\mathcal H_{\ell,0}}\longrightarrow0.
\end{equation}
Also, by \eqref{eq:Hellmann-Feynman} we have
\[
 \sup_{\|h\|_{L_d^p}\le1}|\dot\lambda_n-\dot\lambda|
 \le\|E_n^2-E^2\|_{L_d^{p'}}\longrightarrow0,
\]
so the eigenvalue derivative is continuous in operator norm.  On $[b,R]$
the difference $W_n:=Z_n-Z$ satisfies
\begin{align}
 &(A_q-\lambda J)W_n\notag\\
 &\quad=(\dot\lambda_n-\dot\lambda)JE_n
 +(\dot\lambda J-M_h)(E_n-E)
 -M_{q_n-q}Z_n+(\lambda_n-\lambda)JZ_n.
 \label{eq:linearized-difference-equation}
\end{align}
Because $E_n\to E$ in $\mathcal H_{\ell,0}$, the functions $E_n$ are
uniformly bounded in $L^\infty(b,R)$.  Moreover,
\eqref{eq:H-derivative-continuity} implies that $Z_n$ is uniformly bounded
in $H^1(b,R)$, uniformly for $\|h\|_{L_d^p}\le1$.  Hence the right-hand
side of \eqref{eq:linearized-difference-equation}, viewed as a function on
$[b,R]$, tends to zero in $L^s(b,R)$ uniformly for
$\|h\|_{L_d^p}\le1$.  From \eqref{eq:H-derivative-continuity},
\[
 \sup_{\|h\|_{L_d^p}\le1}
 \big(\|W_n'\|_{L^s(b,R)}+\|W_n\|_{L^\infty(b,R)}\big)
 \longrightarrow0.
\]
Since $q\in L^p(b,R)$, the differential equation for $W_n$ then gives
\[
 \sup_{\|h\|_{L_d^p}\le1}
 \|W_n''\|_{L^s(b,R)}\longrightarrow0.
\]
Consequently,
\[
 \sup_{\|h\|_{L_d^p}\le1}
 \|DE_m(q_n)[h]-DE_m(q)[h]\|_{W^{2,s}(a,R)}\longrightarrow0.
\]
Thus the derivative is continuous as a map into
$\mathcal L(L_d^p,W^{2,s}(a,R))$.  Since the same argument applies at every
point of the implicit-function neighborhood, the eigenpair map is
continuously Fr\'echet differentiable with values in
$\R\times W^{2,s}(a,R)$.  Finally,
$W^{2,s}(a,R)\hookrightarrow C^1([a,R])$ for $s>1$, which gives the last
assertion.
\end{proof}

\begin{lemma}\label{lem:endpoint-regularity}
Let $E$ and $Z$ be as in Proposition~\ref{prop:eigenpair-differentiability}.
Then, for suitable constants $a_E$ and $a_Z$,
\begin{align}
 E(r)&=a_Er^\ell+o(r^\ell),&
 E'(r)&=\ell a_Er^{\ell-1}+o(r^{\ell-1}),\label{eq:E-sector-asymptotics}\\
 Z(r)&=a_Zr^\ell+o(r^\ell),&
 Z'(r)&=\ell a_Zr^{\ell-1}+o(r^{\ell-1})\label{eq:Z-sector-asymptotics}
\end{align}
as $r\downarrow0$, with the derivative statements interpreted as
$rE'(r)\to0$ and $rZ'(r)\to0$ when $\ell=0$.  Consequently,
\begin{equation}\label{eq:Wronskian-origin}
 \lim_{r\downarrow0}
 r^{d-1}\big(Z(r)E'(r)-E(r)Z'(r)\big)=0.
\end{equation}
\end{lemma}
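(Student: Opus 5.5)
The plan is to obtain both asymptotic statements from Lemma~\ref{lem:sector-volterra-endpoint}, and then the Wronskian limit by substituting them.

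For $E$, there is nothing new to prove. Proposition~\ref{prop:sector-spectral-oscillation} already gives \eqref{eq:sector-endpoint-asymptotics} with $a_E:=a_m$. Writing $E=r^\ell V_E$, we have $V_E\to a_E$ and $rV_E'\to0$. Since $E'=\ell r^{\ell-1}V_E+r^\ell V_E'$, this is \eqref{eq:E-sector-asymptotics}. When $\ell=0$ the same computation gives $rE'=rV_E'\to0$.

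For $Z$, the first step is to put \eqref{eq:linearized-eigenfunction} into the form \eqref{eq:generic-sector-inhomogeneous}. Dividing by $r^{d-1}$, $Z\in\mathcal H_{\ell,0}$ is a weak Friedrichs solution of
\[
 -Z''-\frac{d-1}{r}Z'+\frac{\gamma}{r^2}Z+(q-\lambda)Z=r^\ell F,
 \qquad F:=(\dot\lambda-h)V_E.
\]
We take $f=q-\lambda\in L_d^p$. The one point to check is that $F\in L_d^p$ near $0$. This holds because $V_E$ is bounded near $0$ by the Volterra representation of $E$, and both $h$ and constants lie in $L_d^p$ on the bounded interval.

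If the lemma is applied on a small interval $(0,r_0)$, then $Z\in\mathcal H_{\ell,0}$ restricted there is all the lemma actually uses: it uses only finite form energy near the origin and the embedding into $L_d^{p'}$. The lemma then yields $Z=r^\ell V_Z$ with $V_Z\to a_Z:=V_Z(0)$ and $rV_Z'\to0$. Here $a_Z$ may vanish, which is harmless. Differentiating $Z=r^\ell V_Z$ exactly as for $E$ gives \eqref{eq:Z-sector-asymptotics}.

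For the Wronskian, substitute the factorizations:
\[
 ZE'-EZ'=r^{2\ell}\bigl(V_ZV_E'-V_EV_Z'\bigr),
\]
because the $\ell r^{2\ell-1}V_ZV_E$ terms cancel. Therefore
\[
 r^{d-1}(ZE'-EZ')=r^{d+2\ell-2}\bigl(V_Z\cdot rV_E'-V_E\cdot rV_Z'\bigr).
\]
The factors $V_Z$ and $V_E$ are bounded, and $rV_E'\to0$, $rV_Z'\to0$. Since $d+2\ell-2\ge0$, the prefactor is bounded near $0$, so the whole expression tends to $0$.

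The borderline case $d=2$, $\ell=0$ needs no separate argument, since there the prefactor equals $1$ and the bracket still tends to $0$. For sharper information one can use \eqref{eq:sector-volterra-estimates}: $rV'=O(r^{2-d/p})$.

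I expect the main obstacle to be the justification that $Z$, obtained abstractly in $\mathcal H_{\ell,0}$, satisfies the localized weak equation near $0$ with $F\in L_d^p$. The point is that the right-hand side carries the full factor $r^\ell$ through $E=r^\ell V_E$. This is exactly why the linearized solution stays on the Friedrichs branch, and it is the step I would write out carefully, after which the rest is routine.
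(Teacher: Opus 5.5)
Your proposal is correct and matches the paper's proof step by step. You obtain the $E$-asymptotics from the Volterra factorization, apply Lemma~\ref{lem:sector-volterra-endpoint} to $Z$ with $f=q-\lambda$ and $F=(\dot\lambda-h)V_E$, and conclude from the factorized Wronskian $r^{d-1}(ZE'-EZ')=r^{d+2\ell-1}(V_ZV_E'-V_EV_Z')$; the paper does the same, differing only cosmetically in applying the Volterra lemma to $E$ directly rather than going through Proposition~\ref{prop:sector-spectral-oscillation}.
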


\begin{proof}
Write $E=r^\ell V$.  Lemma~\ref{lem:sector-volterra-endpoint}, applied with
$f=q-\lambda$ and $F=0$, gives
$V(r)\to a_E$ and $rV'(r)\to0$, which is exactly
\eqref{eq:E-sector-asymptotics}.
Next write $Z=r^\ell Y$.  Dividing
\eqref{eq:linearized-eigenfunction} by $r^\ell$ gives
\[
 -\big(r^{d+2\ell-1}Y'\big)'
 +r^{d+2\ell-1}(q-\lambda)Y
 =r^{d+2\ell-1}(\dot\lambda-h)V.
\]
The function $V$ is bounded near zero and
$(\dot\lambda-h)V\in L_d^p$; hence
Lemma~\ref{lem:sector-volterra-endpoint} applies with
$F=(\dot\lambda-h)V$.  It gives $Y(r)\to a_Z$ and $rY'(r)\to0$, proving
\eqref{eq:Z-sector-asymptotics}.
Finally, direct substitution yields
\[
 r^{d-1}(ZE'-EZ')
 =r^{d+2\ell-1}(YV'-VY').
\]
The boundedness of $V,Y$ and the limits $rV'(r),rY'(r)\to0$ show that the
right-hand side is $o(r^{d+2\ell-2})$, and therefore tends to zero because
$d+2\ell-2\ge0$; in the borderline case $d=2$, $\ell=0$, it is still
$o(1)$.  This proves \eqref{eq:Wronskian-origin}.
\end{proof}

\begin{lemma}\label{lem:node-derivative}
Let $T=T_{i,m}(q)$.  Then
\begin{equation}\label{eq:Tdot-first}
 D T_{i,m}(q)[h]
 =\frac{1}{T^{d-1}E'(T)^2}
 \left(
 \int_0^T hE^2\dd\mu_d
 -A\int_0^R hE^2\dd\mu_d
 \right).
\end{equation}
Equivalently,
\begin{equation}\label{eq:node-derivative-main}
 D T_{i,m}(q)[h]
 =\int_0^R g_{i,m}(r;q)h(r)\dd\mu_d(r),
\end{equation}
where
\begin{equation*}
 g_{i,m}(r;q)
 =\frac{B}{\Gamma}E(r)^2\ind_{(0,T)}(r)
 -\frac{A}{\Gamma}E(r)^2\ind_{(T,R)}(r)
\end{equation*}
with $\Gamma$ defined in \eqref{eq:A-B-Gamma}.
\end{lemma}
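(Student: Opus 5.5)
The plan is the classical implicit-function/Lagrange-identity argument, with the singular endpoint handled by Lemma~\ref{lem:endpoint-regularity}.

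\emph{Step 1: differentiability of the node.} By Proposition~\ref{prop:eigenpair-differentiability}, the map $\widetilde q\mapsto E_m(\cdot;\widetilde q)$ is $C^1$ into $C^1([a,R])$ for any $a\in(0,T)$. Consider
\[
\Phi(\widetilde q,t)=E_m(t;\widetilde q)
\]
near $(q,T)$. It is $C^1$, satisfies $\Phi(q,T)=0$, and has $\partial_t\Phi(q,T)=E'(T)\neq0$ because the node is simple. The implicit function theorem therefore gives a $C^1$ local zero $t(\widetilde q)$. By the $C^1([a,R])$ convergence in Proposition~\ref{prop:eigenpair-compactness}, the zeros of $E_m(\cdot;\widetilde q)$ stay near those of $E$ and keep their count and order. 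Hence $t(\widetilde q)=T_{i,m}(\widetilde q)$, and
\[
DT_{i,m}(q)[h]=-\frac{Z(T)}{E'(T)}.
\]

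\emph{Step 2: computing $Z(T)$.} I will use the Lagrange identity on $(0,T)$. The functions $E$ and $Z$ satisfy
\[
-(r^{d-1}E')'+\gamma r^{d-3}E+r^{d-1}(q-\lambda)E=0
\]
and \eqref{eq:linearized-eigenfunction}, respectively. Multiply the $Z$-equation by $E$, the $E$-equation by $Z$, subtract, and integrate over $(\delta,T)$. This gives
\[
\Bigl[r^{d-1}(ZE'-EZ')\Bigr]_\delta^T=\int_\delta^T(\dot\lambda-h)E^2\dd\mu_d .
\]
Both sides make sense because $E,Z\in C^1$ away from $0$ and $hE^2\in L^1_d$.

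\emph{Step 3: the endpoint limit (main obstacle).} Letting $\delta\downarrow0$, the boundary term at $0$ vanishes by \eqref{eq:Wronskian-origin}; this is exactly where the Friedrichs-branch analysis is needed. At $T$ we have $E(T)=0$, so
\[
T^{d-1}Z(T)E'(T)=\dot\lambda A-\int_0^T hE^2\dd\mu_d .
\]
Dividing by $-E'(T)^2T^{d-1}$ and using \eqref{eq:Hellmann-Feynman} for $\dot\lambda$ yields \eqref{eq:Tdot-first}.

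\emph{Step 4: the gradient form.} Write
\[
\int_0^T hE^2-A\int_0^R hE^2=(1-A)\int_0^T hE^2-A\int_T^R hE^2 .
\]
Since $B=1-A$, this gives \eqref{eq:node-derivative-main} with the stated $g_{i,m}$.

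Beyond the endpoint Wronskian, the only delicate point is identifying the IFT zero with the $i$-th node. That requires ruling out nodes escaping to $0$ or colliding. The endpoint asymptotics \eqref{eq:sector-endpoint-asymptotics} with $a_m\neq0$, together with the fixed zero count from Proposition~\ref{prop:sector-spectral-oscillation}, handle this.
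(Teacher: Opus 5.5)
Your proposal is correct and follows the paper's proof essentially step for step. The scalar implicit function theorem gives $DT_{i,m}(q)[h]=-Z(T)/E'(T)$. The Lagrange identity integrated over $(0,T)$, with the origin Wronskian removed by Lemma~\ref{lem:endpoint-regularity} and combined with \eqref{eq:Hellmann-Feynman}, gives \eqref{eq:Tdot-first}, and splitting at $T$ with $B=1-A$ gives the gradient form.

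One minor point: the endpoint asymptotics \eqref{eq:sector-endpoint-asymptotics} are not needed to identify the implicit-function zero with $T_{i,m}(\widetilde q)$. The $C^1([a,R])$ convergence together with the exact count of $m-1$ interior zeros already does this, as in the proof of Theorem~\ref{thm:node-calculus}.
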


\begin{proof}
By Proposition~\ref{prop:eigenpair-differentiability}, the map
$(r,\widetilde q)\mapsto E_m(r;\widetilde q)$ is $C^1$ in a neighborhood of $(T,q)$.  Since $E(T;q)=0$ and $E'(T;q)\ne0$, the scalar implicit function theorem gives
\begin{equation}\label{eq:Tdot-implicit}
 D T_{i,m}(q)[h]=-\frac{Z(T)}{E'(T)},
\end{equation}
where $Z$ was defined in \eqref{eq:Zdef}.
Multiply \eqref{eq:linearized-eigenfunction} by $E$, subtract the eigenvalue equation multiplied by $Z$, and use the Lagrange identity.  One obtains

\[
 \left[r^{d-1}(ZE'-EZ')\right]'
 =r^{d-1}(\dot\lambda-h)E^2.
\]
Lemma~\ref{lem:endpoint-regularity} shows that the singular endpoint
contributes no boundary term to this identity.  Thus the calculation below
is valid on the complete Friedrichs sector, not merely on intervals bounded
away from the origin.
Integrating from $0$ to $T$ and using \eqref{eq:Wronskian-origin}, we obtain,
since $E(T)=0$,
\[
 T^{d-1}Z(T)E'(T)
 =\dot\lambda A-\int_0^T hE^2\dd\mu_d.
\]
Together with \eqref{eq:Tdot-implicit} and \eqref{eq:Hellmann-Feynman}, this proves \eqref{eq:Tdot-first}.
Splitting the total integral at $T$ gives \eqref{eq:node-derivative-main}. The proof is complete.
\end{proof}

The preceding endpoint analysis is the point at which the singular problem
departs most sharply from the regular finite-interval theory.  Formula
\eqref{eq:node-derivative-main} is obtained only after proving
\eqref{eq:Wronskian-origin}; in a regular problem the corresponding boundary
term is disposed of directly by the imposed endpoint condition.  Here its
vanishing is a theorem about the Friedrichs asymptotics of both the
eigenfunction and its potential derivative.  Thus the nodal calculus below is
a genuinely singular-endpoint result, not a regular formula applied away from
$r=0$ and then extended formally.

We now summarize the weak sequential continuity and Fr\'echet differentiability of the fixed-sector nodal maps.
\begin{theorem}\label{thm:node-calculus}
Under \eqref{eq:main-p-assumption}, the map
\(T_{i,m}:L_d^p\longrightarrow(0,R)\)
is weakly sequentially continuous and continuously Fr\'echet differentiable.  If $q_n\weakto q$ in $L_d^p$, then
\(T_{i,m}(q_n)\longrightarrow T_{i,m}(q).\)
Its derivative is given by \eqref{eq:node-derivative-main}, with
$g_{i,m}(\cdot;q)\in L_d^{p'}$.  In particular,
$D T_{i,m}(q)\ne0$ and
\begin{equation}\label{eq:translation-orthogonality}
 \int_0^R g_{i,m}(r;q)\dd\mu_d(r)=0,
\end{equation}
which agrees with the translation invariance $T_{i,m}(q+c)=T_{i,m}(q)$.
\end{theorem}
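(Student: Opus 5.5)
The theorem collects several facts, and I would prove them in order: weak sequential continuity, Fr\'echet differentiability with the gradient formula, continuity of the derivative, and finally the non-vanishing of $DT_{i,m}(q)$ together with \eqref{eq:translation-orthogonality}.

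\emph{Weak sequential continuity.} Let $q_n\weakto q$ in $L_d^p$ and put $T=T_{i,m}(q)$. The sequence $(q_n)$ is bounded, so Proposition~\ref{prop:eigenpair-compactness} applies. After a sign choice it gives $E_{m,n}\to E$ in $C^1([a,R])$ for every $a>0$. Since $E$ has exactly $m-1$ simple zeros in $(0,R)$, a standard argument shows that for large $n$ the zeros of $E_{m,n}$ in $[a,R]$ lie in small neighbourhoods of the zeros of $E$, one per neighbourhood. This uses the intermediate value theorem together with $E'(T_j)\neq0$ and uniform convergence of derivatives.

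The delicate point is to exclude zeros of $E_{m,n}$ drifting into $(0,a)$. I would handle this with the zero count rather than an endpoint estimate. Choose $a$ smaller than the first node of $E$ and such that $E$ does not vanish on $(0,a]$; this is possible because $a_m\neq0$ in \eqref{eq:sector-endpoint-asymptotics}. For large $n$, $E_{m,n}$ then has exactly $m-1$ zeros in $[a,R)$, one near each node of $E$. By Proposition~\ref{prop:sector-spectral-oscillation} it has exactly $m-1$ zeros in $(0,R)$ in total, so it has none in $(0,a)$. Hence the $i$th zero of $E_{m,n}$ is the one near $T$, and $T_{i,m}(q_n)\to T$.

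\emph{Differentiability and the gradient.} Lemma~\ref{lem:node-derivative} already gives Fr\'echet differentiability at each $q$ with gradient $g_{i,m}$. The ingredients are the $C^1$ dependence of $E_m$ in $C^1([a,R])$ from Proposition~\ref{prop:eigenpair-differentiability} and the implicit function theorem applied to $\widetilde q\mapsto E_m(\cdot;\widetilde q)$ near $(T,q)$. The local sign choice does not affect the zero set. The implicit function theorem also makes $T_{i,m}$ a $C^1$ map, since the derivative $-Z(T)/E'(T)$ is a composition of continuous maps. Concretely, $\widetilde q\mapsto T_{i,m}(\widetilde q)$ is continuous, and $\widetilde q\mapsto E'_m(T_{i,m}(\widetilde q);\widetilde q)$ and $\widetilde q\mapsto DE_m(\widetilde q)[\cdot](T_{i,m}(\widetilde q))$ are continuous in operator norm by the $C^1([a,R])$-valued $C^1$ statement.

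Alternatively, I would verify continuity of $q\mapsto g_{i,m}(\cdot;q)$ in $L_d^{p'}$ directly. On $L_d^p$ we have $E_m(\cdot;\widetilde q)^2\to E^2$ in $L_d^{p'}$, by strong $\mathcal H_{\ell,0}$ convergence and the embedding into $L_d^{2p'}$. The coefficients $A$ and $\Gamma$ are continuous by the node continuity above and $C^1$ convergence near $T$. The indicator cut at a moving $T$ changes the $L_d^{p'}$ norm by the integral of $E^{2p'}$ over a shrinking interval, which tends to zero. The membership $g_{i,m}\in L_d^{p'}$ follows from $E\in L_d^{2p'}$.

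\emph{Non-vanishing and translation orthogonality.} Integrating $g_{i,m}$ against $\dd\mu_d$ gives $(BA-AB)/\Gamma=0$, which is \eqref{eq:translation-orthogonality}. Constants lie in $L_d^p$ on the bounded interval, and $H_{q+c,\ell}=H_{q,\ell}+c$ has the same eigenfunctions, so this is consistent with translation invariance. For $DT_{i,m}(q)\neq0$, take $h=\ind_{(0,T)}\in L_d^p$. Then $DT_{i,m}(q)[h]=BA/\Gamma>0$, since $A,B>0$: $E$ is nontrivial on each side of $T$.

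I expect the main obstacle to be the exclusion of nodes escaping into the singular endpoint in the continuity step. The zero-counting argument above resolves it, using $a_m\neq0$.
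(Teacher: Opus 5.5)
Your proposal is correct and follows essentially the same route as the paper. The gradient formula comes from Lemma~\ref{lem:node-derivative}. Continuity of the derivative is checked, exactly as in your ``alternative'' paragraph, by showing $g_{i,m}(\cdot;q_n)\to g_{i,m}(\cdot;q)$ in $L_d^{p'}$ through convergence of $E_n^2$, $A_n$, $\Gamma_n$ and the moving-indicator estimate. Weak sequential continuity is obtained as you do it: from $C^1([a,R])$ convergence (Proposition~\ref{prop:eigenpair-compactness}), persistence of the simple zeros, and the exact count of $m-1$ interior zeros, which rules out nodes escaping toward $r=0$.

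Your direct implicit-function argument for $C^1$ regularity and your test function $h=\ind_{(0,T)}$, giving $DT_{i,m}(q)[h]=AB/\Gamma>0$, are valid minor variants that spell out what the paper states briefly. The appeal to $a_m\neq0$ when choosing $a$ is superfluous, since any $a$ below the first node works.
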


\begin{proof}
The derivative formula follows from Lemma~\ref{lem:node-derivative}.  Since
$E\in \mathcal H_{\ell,0}\hookrightarrow L_d^{2p'}$, we have $E^2\in L_d^{p'}$, and therefore $g_{i,m}(\cdot;q)\in L_d^{p'}$.  Because $A,B,\Gamma>0$, the derivative is nonzero.  Moreover,
\[
 \int_0^R g_{i,m}\dd\mu_d
 =\frac{B}{\Gamma}A-\frac{A}{\Gamma}B=0.
\]
We next verify continuity of the derivative in operator norm.
Suppose $q_n\to q$ strongly in $L_d^p$, and use locally sign-fixed normalized eigenfunctions.
By Proposition~\ref{prop:eigenpair-differentiability} and the implicit-function argument in
Lemma~\ref{lem:node-derivative}, one gets
\[
 E_n\to E\text{ in }L_d^{2p'},
 \qquad
 T_n\to T,
 \qquad
 E_n'(T_n)\to E'(T),
\]
where $T=T_{i,m}(q)$ and $E=E_m(\cdot;q)$.
Hence $A_n\to A$, $B_n\to B$, and $\Gamma_n\to\Gamma$.  Furthermore,
\begin{align*}
 &\|E_n^2\ind_{(0,T_n)}-E^2\ind_{(0,T)}\|_{L_d^{p'}}\\
 &\quad\le
 \|E_n^2-E^2\|_{L_d^{p'}}
 +\|E^2(\ind_{(0,T_n)}-\ind_{(0,T)})\|_{L_d^{p'}}
 \longrightarrow0,
\end{align*}
and the same holds on the right interval.  Thus
$g_{i,m}(\cdot;q_n)\to g_{i,m}(\cdot;q)$ in $L_d^{p'}$, proving continuous Fr\'echet differentiability.

Finally, let $q_n\weakto q$.  Choose disjoint intervals around the $m-1$ simple zeros of $E_m(\cdot;q)$, all contained in $[a,R)$ for some $a>0$.  By Proposition~\ref{prop:eigenpair-compactness}, the sign-fixed eigenfunctions converge in $C^1([a,R])$.  Each simple zero therefore persists uniquely in its chosen interval.  Since every $E_m(\cdot;q_n)$ has exactly $m-1$ interior zeros, no additional zero can occur outside these intervals.  Their ordering is preserved, and hence
$T_{i,m}(q_n)\to T_{i,m}(q)$. The proof is complete.
\end{proof}

\begin{corollary}\label{cor:submersion}
For every $q\in L_d^p$, the level set
\[
 \{\widetilde q:T_{i,m}(\widetilde q)=T_{i,m}(q)\}
\]
is, locally near $q$, a $C^1$ submanifold of codimension one.  Furthermore, $T_{i,m}$ maps a neighborhood of $q$ onto an open interval containing $T_{i,m}(q)$.
\end{corollary}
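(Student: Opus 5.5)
The plan is to apply the Banach-space submersion (regular value) theorem to the $C^1$ functional $T_{i,m}:L_d^p\to(0,R)$ furnished by Theorem~\ref{thm:node-calculus}. Everything needed for this has been established there: $T_{i,m}$ is continuously Fr\'echet differentiable near $q$, and its derivative
\[
 DT_{i,m}(q)[h]=\int_0^R g_{i,m}(r;q)h(r)\dd\mu_d(r)
\]
is a nonzero bounded linear functional on $L_d^p$, since $g_{i,m}(\cdot;q)\in L_d^{p'}$ and $A,B,\Gamma>0$.

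First I would produce a concrete direction $h_0\in L_d^p$ with $DT_{i,m}(q)[h_0]=1$. A natural choice is $h_0=c\,\ind_{(0,T)}$ with $T=T_{i,m}(q)$. Then
\[
 DT_{i,m}(q)[h_0]=c\,\frac{B}{\Gamma}\,A,
\]
which is positive for $c>0$, so a suitable $c$ normalizes it to $1$. Let $K:=\ker DT_{i,m}(q)$, a closed hyperplane. The splitting $L_d^p=K\oplus\R h_0$ is topological, with projection $h\mapsto h-DT_{i,m}(q)[h]\,h_0$. Thus the kernel is complemented, which is the only point where Banach-space (rather than Hilbert) structure might cause trouble, and here it is automatic for codimension one.

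Next I would define
\[
 \Phi(k,t):=T_{i,m}(q+k+th_0)
\]
on a neighborhood of $0$ in $K\times\R$. This map is $C^1$ and satisfies $\partial_t\Phi(0,0)=1$. The implicit function theorem then yields a $C^1$ map $k\mapsto t(k)$ with $t(0)=0$ and $Dt(0)=0$ such that, near $q$,
\[
 \{T_{i,m}=T_{i,m}(q)\}=\{q+k+t(k)h_0\}.
\]
This is a graph over $K$, hence a $C^1$ submanifold of codimension one.

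For the openness statement, consider the scalar curve $t\mapsto T_{i,m}(q+th_0)$. It is $C^1$ with derivative $1$ at $t=0$, so it is strictly increasing on a small interval $(-\delta,\delta)$. Its image is therefore an open interval containing $T_{i,m}(q)$, and it lies inside $T_{i,m}(\mathcal U)$ for any neighborhood $\mathcal U$ of $q$ once $\delta$ is small. Hence $T_{i,m}$ maps every neighborhood onto a set containing an open interval around $T_{i,m}(q)$. If the image of the neighborhood itself is required to be an open interval, I would take a convex (ball) neighborhood, on which continuity gives a connected image, and apply the same local surjectivity at each of its points.

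I do not expect a genuine obstacle: the nondegeneracy $DT_{i,m}(q)\neq0$, which is the substantive input, has already been proved, and the remaining steps are routine.
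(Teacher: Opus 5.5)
Your proposal is correct and follows the same route as the paper: Theorem~\ref{thm:node-calculus} gives a nonzero derivative $DT_{i,m}(q)$, the Banach-space submersion theorem gives the codimension-one level set, and local invertibility of a scalar curve $s\mapsto T_{i,m}(q+sh)$ gives the openness claim. The only difference is the choice of test direction: the paper takes $h=|g|^{p'-2}g$ with $g=g_{i,m}(\cdot;q)$, so that $DT_{i,m}(q)[h]=\|g\|_{L_d^{p'}}^{p'}>0$, while your $h_0=c\,\ind_{(0,T)}$ gives $cAB/\Gamma>0$; both choices work, and your connected-image argument on a ball is a harmless extra detail.
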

\begin{proof}
The derivative is a nonzero bounded linear functional by Theorem~\ref{thm:node-calculus}.
The Banach-space submersion theorem applies.  Explicitly, with $g=g_{i,m}(\cdot;q)$, choose
\[
 h=J_{p'}(g)=|g|^{p'-2}g\in L_d^p.
\]
Then
\[
 D T_{i,m}(q)[h]=\int_0^R |g|^{p'}\dd\mu_d>0,
\]
so the scalar function $s\mapsto T_{i,m}(q+sh)$ is locally invertible.
\end{proof}

\subsection{Existence of optimal potentials}

We next use the preceding continuity result to prove existence of optimal potentials.

\begin{theorem}\label{thm:existence}
Assume \eqref{eq:main-p-assumption}.  For every observed node $T_*\in(0,R)$, the set $\mathcal S_{T_*}$ given in \eqref{eq:admissible-set} is nonempty and problem \eqref{eq:RINP} possesses at least one optimal potential $\widehat q\in\mathcal S_{T_*}$.  Moreover, for every $q_0\in L_d^p$, nearby nodal level sets form a $C^1$ codimension-one foliation of a neighborhood of $q_0$.
\end{theorem}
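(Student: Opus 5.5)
Three things must be shown: $\mathcal S_{T_*}\neq\emptyset$, existence of a minimizer by the direct method, and the foliation statement.

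\emph{Nonemptiness.} The natural route is a connectedness argument for the image of $T_{i,m}$. By Theorem~\ref{thm:node-calculus}, $T_{i,m}$ is continuous on $L_d^p$, and $L_d^p$ is connected, so the image $I:=T_{i,m}(L_d^p)\subset(0,R)$ is an interval. By Corollary~\ref{cor:submersion}, $I$ is open. It remains to show $\inf I=0$ and $\sup I=R$. For this I would use step potentials $q_\kappa=\kappa\ind_{(0,\tau)}$ with $\kappa\to+\infty$ and $\tau$ fixed. As $\kappa$ grows, the eigenfunction is driven out of $(0,\tau)$ and the problem degenerates to the annulus $(\tau,R)$. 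The $m-1$ interior nodes therefore tend to the nodes of the $m$th Dirichlet eigenfunction of the sector operator on $(\tau,R)$ (Dirichlet at $\tau$). Letting $\tau\uparrow R$ pushes $T_{i,m}$ arbitrarily close to $R$.

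Symmetrically, $q_\kappa=\kappa\ind_{(\tau,R)}$ confines the mode to the ball of radius $\tau$, and letting $\tau\downarrow0$ forces $T_{i,m}\to0$. The $m$th mode is compressed into $(0,\tau)$, and rescaling shows its nodes are $\tau$ times the fixed nodes of the $m$th Friedrichs Dirichlet mode on $(0,1)$.

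The quantitative part of the confinement step is the main obstacle. I would handle it through the min--max characterization together with the form bound \eqref{eq:form-bound}, in three steps:
\begin{itemize}
\item The eigenvalues $\lambda_j(q_\kappa)$, $j\le m$, stay bounded by the annulus (or ball) Dirichlet eigenvalues, using test functions supported where $q_\kappa=0$.
\item Hence $\kappa\int_{\text{barrier}}E^2\dd\mu_d$ stays bounded, so the $L^2_d$-mass of $E$ in the barrier tends to zero.
\item Compactness in $\mathcal H_{\ell,0}$ then identifies the limit eigenpair, and the $C^1$ convergence argument used in Proposition~\ref{prop:eigenpair-compactness} (applied on compact subsets of the free region) transfers simple zeros to the limit.
\end{itemize}
A simpler alternative may suffice: it is enough to exhibit potentials with $T_{i,m}<\varepsilon$ and $T_{i,m}>R-\varepsilon$. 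This follows from a Sturm comparison showing that the free region must contain at least $m-1$ nodes. Either way, since $I$ is an interval containing values arbitrarily close to $0$ and to $R$, we get $I=(0,R)$, and hence $\mathcal S_{T_*}\neq\emptyset$.

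\emph{Existence of a minimizer.} Let $(q_n)\subset\mathcal S_{T_*}$ be a minimizing sequence for $\|q-q_0\|_{L_d^p}$. It is bounded in the reflexive space $L_d^p$ ($1<p<\infty$), so after passing to a subsequence, $q_n\weakto\widehat q$. Weak sequential continuity (Theorem~\ref{thm:node-calculus}) gives $T_{i,m}(\widehat q)=\lim T_{i,m}(q_n)=T_*$, so $\widehat q\in\mathcal S_{T_*}$. Weak lower semicontinuity of the norm then shows that $\widehat q$ is optimal.

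\emph{Foliation.} Near $q_0$, the map $T_{i,m}$ is a $C^1$ submersion by Corollary~\ref{cor:submersion}. Fix $h$ with $DT_{i,m}(q_0)[h]>0$ and split $L_d^p=\ker DT_{i,m}(q_0)\oplus\R h$. The implicit function theorem then gives local coordinates in which $T_{i,m}$ is the last coordinate. Consequently, the level sets $\{T_{i,m}=t\}$ for $t$ near $T_{i,m}(q_0)$ are the $C^1$ codimension-one leaves of a foliation of a neighborhood of $q_0$.
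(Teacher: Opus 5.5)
Your proof is correct. The existence and foliation parts coincide with the paper's: the direct method uses the weak sequential continuity of Theorem~\ref{thm:node-calculus}, and the foliation comes from Corollary~\ref{cor:submersion}, where your straightening chart is, if anything, more explicit than the paper's one-line citation.

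The genuine difference is in nonemptiness. The paper proves it constructively in Proposition~\ref{prop:global-feasibility}. It partitions $(0,R)$ with $T_*$ as the $i$th point and glues positive first Dirichlet eigenfunctions of the free sector expression on the subintervals, using alternating constants chosen to match derivatives. It then sets $q_*=\lambda_*-\kappa_j^2$ on the $j$th piece, so the glued function is a Friedrichs eigenfunction with exactly $m-1$ simple zeros, hence the $m$th one. You argue topologically instead: the image of $T_{i,m}$ is an interval, and barrier potentials push the node toward either end.

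Of your two options, the second (Sturm comparison) suffices and is cleaner. Take $\kappa$ larger than the min--max bound on $\lambda_m(q_\kappa)$, namely the $m$th Dirichlet eigenvalue of the free region, which does not depend on $\kappa$. On the barrier, the flux $r^{d-1}E'$ then has derivative with the sign of $E$. This rules out a sign interval bounded by two zeros, or by a zero and $R$. At the origin, the zero-flux branch $E=r^\ell V$ from Lemma~\ref{lem:sector-volterra-endpoint} makes $|V|$ increase on $(0,\tau)$. So no node lies in the barrier, and neither the $\kappa\to\infty$ compactness limit nor the openness from Corollary~\ref{cor:submersion} is needed.

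As for what each approach buys: your intermediate-value route avoids the matching computation, but it is nonconstructive and intrinsically scalar. The paper's gluing produces an explicit bounded piecewise constant potential realizing the whole nodal vector $(\tau_1,\dots,\tau_{m-1})$ at once. The paper reuses this for multi-node feasibility (Proposition~\ref{prop:multi-global-feasibility}) and for joint spectral--nodal feasibility (Theorem~\ref{thm:spectral-nodal}). Extending your approach to several nodes would need a degree-type argument.
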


\begin{proposition}\label{prop:global-feasibility}
For every $m\geq2$, $1\leq i\leq m-1$, and $T_*\in(0,R)$ there exists a
bounded radial piecewise constant potential $q_*$ such that
\(T_{i,m}(q_*)=T_*\).
More generally, every ordered vector
$0<\tau_1<\cdots<\tau_{m-1}<R$ is the complete nodal vector of the $m$th
eigenfunction in the fixed sector for some bounded piecewise constant
potential.
\end{proposition}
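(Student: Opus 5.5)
The plan is to build the potential explicitly, using a large piecewise constant well on each nodal subinterval, and to match the eigenvalue across interfaces by a scaling argument. Fix the target vector $0<\tau_1<\cdots<\tau_{m-1}<R$ and put $\tau_0=0$, $\tau_m=R$, $I_j=(\tau_{j-1},\tau_j)$. Fix a common target eigenvalue, say $\Lambda=0$. On each $I_j$ we look for a constant $c_j$ such that the sector equation
\[
-u''-\tfrac{d-1}{r}u'+\tfrac{\gamma}{r^2}u+c_ju=0
\]
has a solution with the following properties. On $I_1$ it is the Friedrichs branch of Lemma~\ref{lem:sector-volterra-endpoint} and vanishes first at $\tau_1$. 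On the middle intervals it vanishes exactly at both endpoints with no interior zero. On $I_m$ it vanishes at $\tau_{m-1}$ and at $R$ with no interior zero. In other words, $-c_j$ should be the first Dirichlet eigenvalue of the free sector operator on $I_j$, with the Friedrichs condition at $0$ when $j=1$. Set
\[
c_j:=-\mu_1(I_j),
\]
where $\mu_1(I_j)$ is the principal eigenvalue of $-(r^{d-1}u')'/r^{d-1}+\gamma r^{-2}u$ on $I_j$. For $j\ge2$ this is a regular Sturm--Liouville problem on an interval away from $0$, so it has a positive principal eigenfunction $\phi_j$. For $j=1$ the principal eigenfunction is $r^{-(d-2)/2}J_\nu(j_{\nu,1}r/\tau_1)$, which is positive on $(0,\tau_1)$.

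Next the eigenfunction is glued. Define $u=\alpha_j\phi_j$ on $I_j$, with signs alternating so that the sign changes at each $\tau_j$. Choose $|\alpha_j|$ recursively so that the one-sided derivatives match at each $\tau_j$, $j=1,\dots,m-1$. This is possible because every $\phi_j$ has a simple zero at both of its endpoints, so $\phi_j'(\tau_{j-1})\ne0$ and $\phi_j'(\tau_j)\ne0$ by Hopf-type simplicity (uniqueness for the regular ODE). The glued $u$ is then $C^1$ on $(0,R]$ and solves the equation with $q_*:=\sum_j c_j\ind_{I_j}$ and $\lambda=0$ weakly across the interfaces. It lies in $\mathcal H_{\ell,0}$ near $0$ because the Bessel branch does, and it satisfies $u(R)=0$. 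Hence $0$ is an eigenvalue of $H_{q_*,\ell}$ and $u$ is an eigenfunction. The potential $q_*$ is bounded, so it belongs to $L_d^p$.

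The index of the eigenvalue is then identified. By construction $u$ has exactly $m-1$ interior zeros, all of them simple, located at $\tau_1,\dots,\tau_{m-1}$. By Proposition~\ref{prop:sector-spectral-oscillation} the eigenvalues are simple and the $k$th eigenfunction has exactly $k-1$ zeros. Therefore $0=\lambda_m(q_*)$ and $u=\pm E_m(\cdot;q_*)$ up to normalization, which gives $T_{i,m}(q_*)=\tau_i$ for all $i$. The first statement follows by choosing any ordered vector with $\tau_i=T_*$.

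The step I expect to need the most care is the endpoint interval $I_1$ in the borderline cases. One has to check that the Bessel principal function is really the Friedrichs branch, that is, that it has the form $r^\ell V$ with $V(0)\neq0$, consistent with Lemma~\ref{lem:sector-volterra-endpoint}. One also has to confirm that gluing at the interfaces produces an element of the form domain, so that the weak eigen-equation holds globally; this uses the fact that $C^1$ matching removes the interface boundary terms in the form identity. Everything else reduces to the regular one-dimensional theory on intervals bounded away from $0$.
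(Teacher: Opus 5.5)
Your proposal is correct and follows essentially the same construction as the paper. Both take the principal eigenfunction of the free sector expression on each subinterval (Friedrichs condition on the first one), glue these with alternating-sign coefficients chosen recursively so that derivatives match at the interfaces, and let the potential equal the common eigenvalue minus the local principal eigenvalue on each piece. Both then use simplicity and Sturm oscillation to identify the glued function as the $m$th eigenfunction; your $\Lambda=0$ is just the paper's arbitrary $\lambda_*$ with a specific value.
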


\begin{proof}
Set $r_0=0$, $r_m=R$, and choose
$0<r_1<\cdots<r_{m-1}<R$, with $r_i=T_*$ in the one-node case.  On
$(0,r_1)$ take the positive first eigenfunction of
\[
 -\big(r^{d-1}\psi'\big)'+\gamma r^{d-3}\psi
 =\kappa_1^2r^{d-1}\psi
\]
with the Friedrichs condition at $0$ and the Dirichlet condition at $r_1$.
On each $(r_{j-1},r_j)$, $j\geq2$, take the positive first Dirichlet
eigenfunction $\phi_j$ of the same fixed-sector expression, with first
eigenvalue $\kappa_j^2$; denote the first-interval eigenfunction by
$\phi_1$.  Choose $c_1=1$ and recursively set
\begin{equation}\label{eq:gluing-coefficients}
 c_{j+1}:=c_j\frac{\phi_j'(r_j^-)}{\phi_{j+1}'(r_j^+)},
 \qquad 1\le j\le m-1.
\end{equation}
Because a positive first Dirichlet eigenfunction has negative derivative at
its right endpoint and positive derivative at its left endpoint, the signs
of the $c_j$ alternate.  The function
$\Psi=c_j\phi_j$ on $(r_{j-1},r_j)$ is continuous, vanishes at every
interface, and satisfies \(\Psi'(r_j^-)=\Psi'(r_j^+)\).
Thus its weighted flux $r^{d-1}\Psi'$ also matches, and integration by parts
on the subintervals shows that $\Psi$ is a global Friedrichs weak solution.
For any $\lambda_*\in\mathbb R$, set
\[
 q_*(r)=\lambda_*-\kappa_j^2,
 \qquad r\in(r_{j-1},r_j).
\]
The glued solution has exactly the simple zeros
$r_1,\ldots,r_{m-1}$.  Fixed-sector Sturm oscillation therefore identifies it
as an $m$th eigenfunction, proving both assertions.
\end{proof}

\begin{proof}[Proof of Theorem~\ref{thm:existence}]
By Proposition~\ref{prop:global-feasibility}, $\mathcal S_{T_*}$ is nonempty.  Choose a minimizing sequence $q_n\in\mathcal S_{T_*}$.  It is bounded in the reflexive space $L_d^p$, hence a subsequence converges weakly to some $\widehat q\in L_d^p$.  By Theorem~\ref{thm:node-calculus},
\[
 T_{i,m}(\widehat q)=\lim_{n\to\infty}T_{i,m}(q_n)=T_*,
\]
so $\widehat q\in\mathcal S_{T_*}$.  Weak lower semicontinuity gives
\[
 \|\widehat q-q_0\|_{L_d^p}
 \le\liminf_{n\to\infty}\|q_n-q_0\|_{L_d^p},
\]
and $\widehat q$ is a minimizer.  The local foliation statement is Corollary~\ref{cor:submersion}.
\end{proof}

\subsection{The critical equation for optimal potentials}

We now derive the Euler--Lagrange equation associated with the constrained minimization problem.
For $1<s<\infty$, define

\[
 \varphi_s(t):=|t|^{s-2}t.
\]

\begin{theorem}\label{thm:critical-system}
Let $\widehat q$ be a nontrivial minimizer of \eqref{eq:RINP}, namely $\widehat q\ne q_0$.  Put $T=T_*$, $E=E_m(\cdot;\widehat q)$, and let $A,B,\Gamma$ be defined by \eqref{eq:A-B-Gamma}.  Then there exist $\kappa\ne0$ and $\varepsilon=\sgn\kappa\in\{-1,1\}$ such that
\begin{equation}\label{eq:Lagrange-kernel}
 \varphi_p(\widehat q-q_0)=\kappa g_{i,m}(\cdot;\widehat q).
\end{equation}
Define the piecewise scaled eigenfunction
\begin{equation}\label{eq:scaled-U}
 U(r):=
 \begin{cases}
 \displaystyle\left(\frac{|\kappa|B}{\Gamma}\right)^{1/2}E(r),&0<r<T,\\[2mm]
 \displaystyle\left(\frac{|\kappa|A}{\Gamma}\right)^{1/2}E(r),&T<r<R.
 \end{cases}
\end{equation}
Then
\begin{equation}\label{eq:q-reconstruction-U}
 \widehat q(r)-q_0(r)=
 \begin{cases}
 \varepsilon |U(r)|^{2p'-2},&0<r<T,\\
 -\varepsilon |U(r)|^{2p'-2},&T<r<R,
 \end{cases}
\end{equation}
and $U$ satisfies, weakly on each side of $T$ and hence almost everywhere there,
\begin{equation}\label{eq:critical-sector-system}
 \begin{cases}
 -U''-\dfrac{d-1}{r}U'+\dfrac{\gamma}{r^2}U+q_0(r)U+\varepsilon\varphi_{2p'}(U)=\lambda_m(\widehat q)U,
 &0<r<T,\\[2mm]
 -U''-\dfrac{d-1}{r}U'+\dfrac{\gamma}{r^2}U+q_0(r)U-\varepsilon\varphi_{2p'}(U)=\lambda_m(\widehat q)U,
 &T<r<R.
 \end{cases}
\end{equation}
Moreover,
\begin{equation}\label{eq:critical-sector-endpoint}
 r^{-\ell}U(r)\longrightarrow a,\qquad
 r^{1-\ell}U'(r)\longrightarrow\ell a
 \quad(r\downarrow0),\qquad U(T)=U(R)=0.
\end{equation}
\begin{equation}\label{eq:mass-balance-main}
 \int_0^T U(r)^2\dd\mu_d(r)=\int_T^R U(r)^2\dd\mu_d(r),
\end{equation}
and
\begin{equation}\label{eq:norm-U-main}
 \|\widehat q-q_0\|_{L_d^p}^p=\int_0^R |U(r)|^{2p'}\dd\mu_d(r).
\end{equation}
The function $U$ has $i-1$ zeros in $(0,T)$ and $m-i-1$ zeros in $(T,R)$.  In general, $U'$ has a jump at $T$.
\end{theorem}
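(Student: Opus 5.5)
The plan is to use the Lagrange multiplier rule for the constrained problem \eqref{eq:RINP}, then translate the multiplier identity into the piecewise nonlinear system.

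\emph{Step 1: the multiplier identity.} Minimize $\Phi(q)=\frac1p\|q-q_0\|_{L_d^p}^p$ on $\mathcal S_{T_*}$; this has the same minimizers as \eqref{eq:RINP}. Since $1<p<\infty$, $\Phi$ is $C^1$ on $L_d^p$, with derivative $D\Phi(q)[h]=\int_0^R\varphi_p(q-q_0)h\dd\mu_d$. By Theorem~\ref{thm:node-calculus}, the constraint $T_{i,m}$ is $C^1$ and $DT_{i,m}(\widehat q)\neq0$. Corollary~\ref{cor:submersion} shows that $\mathcal S_{T_*}$ is locally a $C^1$ hypersurface, so the Lagrange multiplier theorem in Banach spaces gives some $\kappa\in\R$ with $\varphi_p(\widehat q-q_0)=\kappa g_{i,m}(\cdot;\widehat q)$ in $L_d^{p'}$. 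Because $\widehat q\ne q_0$, the left side is nonzero, so $\kappa\neq0$. Set $\varepsilon=\sgn\kappa$.

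\emph{Step 2: invert $\varphi_p$.} We have $\varphi_p^{-1}=\varphi_{p'}$ and $(p'-1)(p-1)=1$. Insert the explicit form of $g_{i,m}$ from Lemma~\ref{lem:node-derivative}. On $(0,T)$,
\[
\widehat q-q_0=\varepsilon\bigl(|\kappa|BE^2/\Gamma\bigr)^{p'-1}=\varepsilon|U|^{2p'-2}.
\]
On $(T,R)$, the same computation gives $-\varepsilon|U|^{2p'-2}$. This is \eqref{eq:q-reconstruction-U}, and \eqref{eq:norm-U-main} follows by computing $|\widehat q-q_0|^p=|U|^{(2p'-2)p}=|U|^{2p'}$.

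\emph{Step 3: the nonlinear equations.} $E$ satisfies the sector eigenvalue equation with potential $\widehat q=q_0\pm\varepsilon|U|^{2p'-2}$. Multiply by the constant scaling factor on each side of $T$. Since $|U|^{2p'-2}U=\varphi_{2p'}(U)$, this gives \eqref{eq:critical-sector-system} weakly on each subinterval. Local $W^{2,s}$ regularity, as in Proposition~\ref{prop:eigenpair-compactness}, then upgrades this to an a.e.\ statement.

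\emph{Step 4: endpoint behaviour, mass balance, zeros.} The endpoint behaviour \eqref{eq:critical-sector-endpoint} comes from \eqref{eq:sector-endpoint-asymptotics} multiplied by the left scaling constant, together with $E(T)=E(R)=0$.

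For the mass balance,
\[
\int_0^TU^2\dd\mu_d=\frac{|\kappa|B}{\Gamma}A=\frac{|\kappa|A}{\Gamma}B=\int_T^RU^2\dd\mu_d.
\]
This is also what \eqref{eq:translation-orthogonality} encodes.

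Since $T=T_{i,m}$ and $E$ has exactly $m-1$ simple zeros by Proposition~\ref{prop:sector-spectral-oscillation}, there are $i-1$ zeros in $(0,T)$ and $m-i-1$ in $(T,R)$. The jump of $U'$ at $T$ is $\bigl((|\kappa|A/\Gamma)^{1/2}-(|\kappa|B/\Gamma)^{1/2}\bigr)E'(T)$, which vanishes only when $A=B$.

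\emph{Main obstacle.} The delicate point is the rigorous multiplier step: $\widehat q$ is a minimizer only on the constraint set, so we must use the local submanifold structure. Concretely, for any $h\in\ker DT_{i,m}(\widehat q)$, the submersion theorem gives a $C^1$ curve $s\mapsto q(s)$ in $\mathcal S_{T_*}$ with $q(0)=\widehat q$ and $q'(0)=h$. Minimality then forces $D\Phi(\widehat q)[h]=0$. So $D\Phi(\widehat q)$ vanishes on the kernel of the nonzero functional $DT_{i,m}(\widehat q)$, and is therefore a multiple of it.

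A second point to check is that the weak form of the equation holds up to $r=0$ with the Friedrichs condition. This is inherited directly from $E\in\mathcal H_{\ell,0}$, since $U$ is a constant multiple of $E$ near $0$.
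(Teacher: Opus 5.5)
Your proposal is correct and follows essentially the same route as the paper. The steps match: the Banach-space Lagrange multiplier rule for $\frac1p\|q-q_0\|_{L_d^p}^p$, justified by the nonvanishing nodal derivative; inversion via $\varphi_{p'}$ using the explicit two-piece kernel $g_{i,m}$; multiplying the eigenvalue equation by the side-dependent scaling constants; and the direct $|\kappa|AB/\Gamma$ computation for the mass balance. Your extra details are consistent with the paper, namely the curve-in-the-level-set justification of the multiplier step and the explicit jump $\bigl((|\kappa|A/\Gamma)^{1/2}-(|\kappa|B/\Gamma)^{1/2}\bigr)E'(T)$, which agrees with Remark~\ref{rem:transmission}.
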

\begin{proof}
It is convenient to minimize \(\mathcal J(q):=\frac1p\|q-q_0\|_{L_d^p}^p\). If $\widehat q\ne q_0$ is a constrained minimizer, the constraint is regular by Theorem~\ref{thm:node-calculus}.  Therefore there exists $\kappa\in\R\setminus\{0\}$ such that
\begin{equation}\label{eq:Lagrange-abstract}
 D\mathcal J(\widehat q)=\kappa D T_{i,m}(\widehat q).
\end{equation}
Since
\[
 D\mathcal J(q)[h]=\int_0^R\varphi_p(q-q_0)h\dd\mu_d,
\]
\eqref{eq:Lagrange-abstract} is precisely \eqref{eq:Lagrange-kernel}.  On $(0,T)$ identity \eqref{eq:Lagrange-kernel} reads
\[
 \varphi_p(\widehat q-q_0)
 =\kappa\frac{B}{\Gamma}E^2,
\]
while on $(T,R)$ it reads
\[
 \varphi_p(\widehat q-q_0)
 =-\kappa\frac{A}{\Gamma}E^2.
\]
Since $\varphi_p^{-1}=\varphi_{p'}$,
\begin{align*}
 \widehat q-q_0
 &=\varepsilon\left(\frac{|\kappa|B}{\Gamma}\right)^{p'-1}|E|^{2p'-2},&&0<r<T,\\
 \widehat q-q_0
 &=-\varepsilon\left(\frac{|\kappa|A}{\Gamma}\right)^{p'-1}|E|^{2p'-2},&&T<r<R.
\end{align*}
Definition \eqref{eq:scaled-U} gives \eqref{eq:q-reconstruction-U}.  Multiplying the eigenvalue equation for $E$ by the corresponding constant scaling factor on each side of $T$ gives \eqref{eq:critical-sector-system}.  The endpoint, boundary, and nodal conditions follow from those of $E$.

For the balance law, it follows from \eqref{eq:scaled-U} that
\begin{align*}
 \int_0^T U^2\dd\mu_d
 &=\frac{|\kappa|B}{\Gamma}\int_0^T E^2\dd\mu_d
 =\frac{|\kappa|AB}{\Gamma},\\
 \int_T^R U^2\dd\mu_d
 &=\frac{|\kappa|A}{\Gamma}\int_T^R E^2\dd\mu_d
 =\frac{|\kappa|AB}{\Gamma}.
\end{align*}
Finally,
\[
 |\widehat q-q_0|^p=|U|^{(2p'-2)p}=|U|^{2p'},
\]
which proves \eqref{eq:norm-U-main}.  The nodal count is inherited from $E$. The proof is complete.
\end{proof}

\begin{remark}\label{rem:transmission}
Although $U(T)=0$, the derivative generally jumps because the two scaling factors in \eqref{eq:scaled-U} differ.  More precisely,

\[
 \frac{U'(T^-)}{U'(T^+)}=\sqrt{\frac BA}.
\]
The original eigenfunction $E$ remains $C^1$ across $T$.  This distinction is important in the shooting reconstruction of Section~\ref{sec:shooting}.
\end{remark}

\begin{remark}\label{rem:trivial}
If $T_*=T_{i,m}(q_0)$, then $q_0$ is the unique minimizer because the minimum distance is zero.  The nonlinear critical system is only needed for $T_*\ne T_{i,m}(q_0)$.
\end{remark}

\section{Reconstruction of minimal fixed-sector potentials}\label{sec:reconstruction}

Throughout this section, for a fixed reference potential $q_0$, we write
\[
 E_m^0:=E_m(\cdot;q_0),\qquad T_0:=T_{i,m}(q_0),
\]
\[
 A_0:=\int_0^{T_0}|E_m^0|^2\dd\mu_d,\qquad
 B_0:=\int_{T_0}^R|E_m^0|^2\dd\mu_d=1-A_0,
\]
\[
 \Gamma_0:=T_0^{d-1}\big((E_m^0)'(T_0)\big)^2,
 \qquad g_0:=g_{i,m}(\cdot;q_0).
\]

\subsection{Bessel reference nodes and fixed-sector dynamics}

Assume that $q_0(r)\equiv c$. Let $J_\nu$ denote the Bessel function of the first kind of order $\nu$,
 i.e., the solution regular at the origin of
\[
x^2 y'' + x y' + (x^2-\nu^2)y = 0,
\]
where $\nu=\ell+\frac{d-2}{2}$ is defined in \eqref{eq:fixed-sector-parameters},
and let $j_{\nu,k}$ denote its $k$-th positive zero.
With $C_m$ chosen so that $\|E_m^0\|_{L_d^2}=1$, we have
\begin{equation}\label{eq:Bessel-prior}
 E_m^0(r)=C_m r^{-(d-2)/2}
 J_\nu\left(\frac{j_{\nu,m}}Rr\right),
 \qquad
 T_0=R\frac{j_{\nu,i}}{j_{\nu,m}},
\end{equation}
and
\begin{equation}\label{eq:Bessel-eigenvalue}
 \lambda_m(q_0)=c+\frac{j_{\nu,m}^2}{R^2}.
\end{equation}
Combining \eqref{eq:Bessel-prior} with
\eqref{eq:node-derivative-main} gives
\[
 g_0(r)=
 \frac{B_0}{\Gamma_0}|E_m^0(r)|^2\ind_{(0,T_0)}(r)
 -\frac{A_0}{\Gamma_0}|E_m^0(r)|^2\ind_{(T_0,R)}(r).
\]
Thus the Bessel kernel determines the leading-order optimal reconstruction
in Theorem~\ref{thm:local-asymptotic}.

The centrifugal term changes the radial energy identity as follows.
The decisive distinction from the one-dimensional constant-prior problem is the following identity.
\begin{proposition}\label{prop:energy-dissipation}
Assume $q_0\equiv c$ and let $U$ solve one side of
\eqref{eq:critical-sector-system}.  Write
$\lambda:=\lambda_m(\widehat q)$ and let $\varsigma=1$ on the left interval
and $\varsigma=-1$ on the right interval.  Define
\begin{equation}\label{eq:sector-energy}
 \mathcal E_\varsigma(r)
 :=\frac12U'(r)^2
 +\frac12\left(\lambda-c-\frac{\gamma}{r^2}\right)U(r)^2
 -\frac{\varsigma\varepsilon}{2p'}|U(r)|^{2p'}.
\end{equation}
Then
\begin{equation}\label{eq:energy-dissipation}
 \mathcal E_\varsigma'(r)
 =-\frac{d-1}{r}U'(r)^2+\frac{\gamma}{r^3}U(r)^2.
\end{equation}
In particular, the energy is nonincreasing for $\ell=0$.
\end{proposition}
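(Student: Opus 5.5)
The identity is a direct differentiation along the critical equation, so the plan is simply to compute $\mathcal E_\varsigma'$ and substitute the equation for $U''$. First, I fix the regularity. On each open side interval, $(0,T)$ or $(T,R)$, $U$ is a constant multiple of the eigenfunction $E$. By Proposition~\ref{prop:sector-spectral-oscillation} and the local $W^{2,s}$ regularity from the proof of Proposition~\ref{prop:eigenpair-compactness}, $U\in W^{2,s}_{\mathrm{loc}}$ there, with $s=\min\{p,2\}>1$. Hence $U$ and $U'$ are locally absolutely continuous away from $r=0$, and equation \eqref{eq:critical-sector-system} holds almost everywhere. Since $2p'>2$, the map $t\mapsto\frac1{2p'}|t|^{2p'}$ is $C^1$ with derivative $\varphi_{2p'}(t)$. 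The chain rule for absolutely continuous functions therefore gives $\frac{d}{dr}\frac1{2p'}|U|^{2p'}=\varphi_{2p'}(U)U'$ almost everywhere. As a consequence $\mathcal E_\varsigma$ is locally absolutely continuous on each side, and the identity is to be understood almost everywhere, which is what the subsequent monotonicity use requires.

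Next, I differentiate term by term:
\[
 \mathcal E_\varsigma'=U'U''+\Bigl(\lambda-c-\frac{\gamma}{r^2}\Bigr)UU'+\frac{\gamma}{r^3}U^2-\varsigma\varepsilon\varphi_{2p'}(U)U'.
\]
With $q_0\equiv c$, the critical equation on the side labelled by $\varsigma$ reads
\[
 U''=-\frac{d-1}{r}U'+\Bigl(\frac{\gamma}{r^2}+c-\lambda\Bigr)U+\varsigma\varepsilon\varphi_{2p'}(U).
\]
The sign $+\varepsilon$ on the left and $-\varepsilon$ on the right is exactly encoded by $\varsigma$. Multiplying this by $U'$ and inserting it above, the terms $(\lambda-c-\gamma r^{-2})UU'$ cancel, and so do the nonlinear terms $\pm\varsigma\varepsilon\varphi_{2p'}(U)U'$. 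What remains is
\[
 \mathcal E_\varsigma'=-\frac{d-1}{r}U'^2+\frac{\gamma}{r^3}U^2,
\]
which is \eqref{eq:energy-dissipation}. The final claim then follows at once: for $\ell=0$ we have $\gamma=0$, so $\mathcal E_\varsigma'=-\frac{d-1}{r}U'^2\le0$ almost everywhere. Absolute continuity upgrades this to monotonicity on each side interval.

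The only point needing care is the regularity justification, since $q$ lies only in $L^p$ and $U''$ exists merely almost everywhere. There is no obstacle beyond it. Near $r=0$ the identity is only claimed on the open interval, so the endpoint behaviour \eqref{eq:critical-sector-endpoint} is not needed here.
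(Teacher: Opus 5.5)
Your proof is correct and is the same direct argument as the paper's: differentiate $\mathcal E_\varsigma$ and substitute the side equation $U''=-\frac{d-1}{r}U'+\bigl(\frac{\gamma}{r^2}+c-\lambda\bigr)U+\varsigma\varepsilon\varphi_{2p'}(U)$, so that everything cancels except the dissipation term and the centrifugal term $\gamma U^2/r^3$. Your regularity discussion is a harmless addition, and it can be strengthened: since $q_0\equiv c$, the potential $\widehat q=c\pm\varepsilon|U|^{2p'-2}$ is continuous on each open side interval, so $U$ is classically $C^2$ there and the identity holds pointwise, not merely almost everywhere.
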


\begin{proof}
Differentiate \eqref{eq:sector-energy} and use
\eqref{eq:critical-sector-system}.  The derivative of
$-\gamma U^2/(2r^2)$ produces the term $\gamma U^2/r^3$, and all remaining
terms cancel as stated.
\end{proof}

\begin{remark}\label{rem:Liouville-weight}
The Liouville substitution removes the first derivative but retains the
inverse-square Bessel term and transforms the physical $L_d^p$ nonlinearity
into an explicitly weighted power.  It therefore does not produce an
autonomous one-dimensional optimization problem.  When $\ell=0$,
\eqref{eq:energy-dissipation} reduces to the monotone dissipative identity;
for higher angular momentum even this monotonicity is lost.
\end{remark}

\subsection{A finite-dimensional observation principle}\label{sec:abstract-observation}
The local reconstruction arguments used below all have the same functional
analytic core.  We record it once in a form that can be applied to a single
node, several nodes, mixed angular-momentum data, and augmented spectral--nodal
observations.

\begin{proposition}
\label{prop:finite-observation-principle}
Let $X=L_d^p$, $1<p<\infty$, and let
$\mathcal O:X\to\mathbb R^M$ be weakly sequentially continuous on $X$ and
continuously Fr\'echet differentiable in a neighborhood of $q_0$.  Put
\[
 \mathcal O_0:=\mathcal O(q_0),\qquad
 G_0:=D\mathcal O(q_0):X\to\mathbb R^M,
\]
and assume that $G_0$ is surjective.  For $\boldsymbol y\in\mathbb R^M$ define
\begin{equation}\label{eq:abstract-rho-R}
 \rho_0(\boldsymbol y)
 :=\min\{\|h\|_X:G_0h=\boldsymbol y\},\qquad
 \mathscr R_0(\boldsymbol y)
 :=\operatorname*{argmin}_{G_0h=\boldsymbol y}\|h\|_X.
\end{equation}
Then $\mathscr R_0$ is well defined, continuous, odd, and homogeneous of
degree one, and $\rho_0$ is a norm equivalent to the Euclidean norm.
Moreover, as $\boldsymbol y\to0$:
\begin{enumerate}
\renewcommand{\labelenumi}{\textup{(\roman{enumi})}}
\item there exists an exactly feasible comparison potential
$q_{\boldsymbol y}^{\rm c}$ such that
\begin{equation}\label{eq:abstract-feasible-correction}
 \mathcal O(q_{\boldsymbol y}^{\rm c})=\mathcal O_0+\boldsymbol y,
 \qquad
 q_{\boldsymbol y}^{\rm c}-q_0
 =\mathscr R_0(\boldsymbol y)+o_X(|\boldsymbol y|);
\end{equation}
\item the minimum-distance problem
\begin{equation}\label{eq:abstract-min-problem}
 \min\{\|q-q_0\|_X:\mathcal O(q)=\mathcal O_0+\boldsymbol y\}
\end{equation}
has a minimizer $\widehat q_{\boldsymbol y}$, and every such minimizer satisfies
\begin{align}
 \widehat q_{\boldsymbol y}-q_0
 &=\mathscr R_0(\boldsymbol y)+o_X(|\boldsymbol y|),
 \label{eq:abstract-min-asymptotic}\\
 \|\widehat q_{\boldsymbol y}-q_0\|_X
 &=\rho_0(\boldsymbol y)+o(|\boldsymbol y|).
 \label{eq:abstract-distance-asymptotic}
\end{align}
\item if $p=2$ and $\mathcal O$ is $C^2$ near $q_0$, then
\begin{equation}\label{eq:abstract-Gram}
 \mathbb M_0:=G_0G_0^*
\end{equation}
is positive definite,
\begin{equation}\label{eq:abstract-Hilbert-right-inverse}
 \mathscr R_0(\boldsymbol y)
 =G_0^*\mathbb M_0^{-1}\boldsymbol y,
\end{equation}
and for all sufficiently small $\boldsymbol y$ the minimizer is unique and
depends $C^1$ on $\boldsymbol y$.  In addition,
\begin{align}
 \widehat q_{\boldsymbol y}-q_0
 &=G_0^*\mathbb M_0^{-1}\boldsymbol y
   +O_X(|\boldsymbol y|^2),
 \label{eq:abstract-Hilbert-expansion}\\
 \|\widehat q_{\boldsymbol y}-q_0\|_X^2
 &=\boldsymbol y^{\mathsf T}\mathbb M_0^{-1}\boldsymbol y
   +O(|\boldsymbol y|^3).
 \label{eq:abstract-Hilbert-cost}
\end{align}
\end{enumerate}
\end{proposition}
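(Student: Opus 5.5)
We will prove the statement in four stages: the finite-dimensional minimum-norm map, an exact feasible correction, compactness and asymptotics of minimizers, and the Hilbert case.

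\emph{Step 1: the minimum-norm map $\mathscr R_0$.} Since $X=L_d^p$ is reflexive and uniformly convex for $1<p<\infty$, and since $G_0$ is surjective onto $\mathbb R^M$, each affine fibre $\{h:G_0h=\boldsymbol y\}$ is a nonempty closed convex set. It therefore contains a unique element of minimal norm, so $\mathscr R_0$ is well defined. Oddness and degree-one homogeneity follow from $G_0(th)=tG_0h$.

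The functional $\rho_0$ is the quotient norm of $X/\ker G_0\cong\mathbb R^M$, so it is a norm. Any two norms on $\mathbb R^M$ are equivalent, which gives the equivalence with the Euclidean norm.

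For continuity of $\mathscr R_0$, take $\boldsymbol y_n\to\boldsymbol y$. Fix a right inverse $S$ of $G_0$, obtained by choosing $h_1,\dots,h_M$ with $G_0h_j=e_j$. Then $\mathscr R_0(\boldsymbol y)+S(\boldsymbol y_n-\boldsymbol y)$ is admissible for $\boldsymbol y_n$, so $\limsup\rho_0(\boldsymbol y_n)\le\rho_0(\boldsymbol y)$. Any weak limit of $\mathscr R_0(\boldsymbol y_n)$ is admissible for $\boldsymbol y$, because $G_0$ has finite rank and is therefore weakly continuous. Uniqueness of the minimizer identifies the weak limit, and the norms converge. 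The Radon--Riesz property of $L^p$ then upgrades weak convergence to strong convergence.

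\emph{Step 2: the exact feasible correction (i).} Consider the map
\[
\Phi(\boldsymbol z,\boldsymbol y):=\mathcal O\bigl(q_0+\mathscr R_0(\boldsymbol y)+S\boldsymbol z\bigr)-\mathcal O_0-\boldsymbol y .
\]
Its derivative in $\boldsymbol z$ at $(0,0)$ is $G_0S=I$. However, $\mathscr R_0$ is only continuous in $\boldsymbol y$, so we use a Brouwer/contraction version of the implicit function theorem in the finite variable $\boldsymbol z$, with $\boldsymbol y$ as a continuous parameter. We rewrite $\Phi=0$ as the fixed-point problem
\[
\boldsymbol z=\boldsymbol y-\bigl(\mathcal O(q_0+\mathscr R_0\boldsymbol y+S\boldsymbol z)-\mathcal O_0-G_0S\boldsymbol z\bigr)+\dots
\]
The $C^1$ property of $\mathcal O$ gives the Taylor estimate
\[
\mathcal O(q_0+k)-\mathcal O_0-G_0k=o(\|k\|).
\]
With $G_0\mathscr R_0\boldsymbol y=\boldsymbol y$, the fixed-point map is a contraction on a ball of radius $o(|\boldsymbol y|)$. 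This yields $|\boldsymbol z|=o(|\boldsymbol y|)$ and hence the expansion \eqref{eq:abstract-feasible-correction}.

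\emph{Step 3: existence and asymptotics of minimizers (ii).} Existence follows as in Theorem~\ref{thm:existence}. The admissible set is nonempty by (i), weak sequential continuity of $\mathcal O$ keeps weak limits admissible, and the norm is weakly lower semicontinuous.

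Comparing with $q^{\rm c}_{\boldsymbol y}$ gives the upper bound
\[
\|\widehat q_{\boldsymbol y}-q_0\|\le\rho_0(\boldsymbol y)+o(|\boldsymbol y|),
\]
so $k:=\widehat q_{\boldsymbol y}-q_0=O(|\boldsymbol y|)$. Linearizing the constraint then gives $G_0k=\boldsymbol y+o(|\boldsymbol y|)$. Hence $k-S(G_0k-\boldsymbol y)$ is admissible for the linear problem, which yields $\|k\|\ge\rho_0(\boldsymbol y)-o(|\boldsymbol y|)$ and proves \eqref{eq:abstract-distance-asymptotic}.

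For \eqref{eq:abstract-min-asymptotic} we rescale. Put $\boldsymbol y=t\boldsymbol e$ with $|\boldsymbol e|=1$ and $k_t:=k/t$. Then $k_t$ is nearly admissible for $\boldsymbol e$ and nearly norm-minimal. We need $k_t-\mathscr R_0(\boldsymbol e)\to0$ uniformly in $\boldsymbol e$.

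\textbf{This uniform step is the main obstacle.} We will argue by contradiction and compactness. Take sequences $t_n\to0$ and $\boldsymbol e_n\to\boldsymbol e$. The rescaled $k_{t_n}$ satisfy $G_0k_{t_n}\to\boldsymbol e$ and $\|k_{t_n}\|\to\rho_0(\boldsymbol e)$. A weak limit is admissible for $\boldsymbol e$ with norm at most $\rho_0(\boldsymbol e)$, so it equals $\mathscr R_0(\boldsymbol e)$. Convergence of norms together with uniform convexity gives strong convergence, which contradicts the assumed gap.

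\emph{Step 4: the Hilbert case (iii).} For $p=2$ we have $\mathscr R_0=G_0^*(G_0G_0^*)^{-1}$, and $\mathbb M_0=G_0G_0^*$ is positive definite because $G_0^*$ is injective. The minimizer satisfies the Lagrange system
\[
\widehat q-q_0=D\mathcal O(\widehat q)^*\boldsymbol\mu,\qquad \mathcal O(\widehat q)=\mathcal O_0+\boldsymbol y .
\]
The constraint is regular near $q_0$, since surjectivity of $G_0$ persists under small perturbations.

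Define
\[
F(k,\boldsymbol\mu,\boldsymbol y)=\bigl(k-D\mathcal O(q_0+k)^*\boldsymbol\mu,\ \mathcal O(q_0+k)-\mathcal O_0-\boldsymbol y\bigr).
\]
Because $\mathcal O$ is $C^2$, $F$ is $C^1$. At $(0,0,0)$ its derivative in $(k,\boldsymbol\mu)$ is $(h,\boldsymbol\nu)\mapsto(h-G_0^*\boldsymbol\nu,\ G_0h)$, which is invertible because $\mathbb M_0$ is. The implicit function theorem then gives a unique small $C^1$ solution branch.

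By (ii), every minimizer is small, and its multiplier is determined by $\boldsymbol\mu=\mathbb M_0^{-1}G_0k+o(1)$, so it is small as well. Every minimizer therefore lies on this branch, which gives uniqueness. A Taylor expansion of the branch yields \eqref{eq:abstract-Hilbert-expansion}, and squaring the norm yields \eqref{eq:abstract-Hilbert-cost}.
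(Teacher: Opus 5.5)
Your proposal is correct and follows the paper's proof essentially step for step. The shared steps are the quotient-norm definition of $\rho_0$ and uniqueness of the minimum-norm element, the finite-dimensional contraction built from a right inverse of $G_0$ for the exact correction, the direct method with the comparison/linearization sandwich and Kadec--Klee upgrading of rescaled minimizers, and the KKT/implicit-function argument that forces every small-data minimizer onto the branch. The differences are cosmetic: you rescale by $|\boldsymbol y|$ and argue by contradiction where the paper divides by $\rho_0(\boldsymbol y)$, and you spell out the continuity of $\mathscr R_0$ that the paper only asserts. One point to make explicit: the $O(|\boldsymbol y|^2)$ remainder follows from the local Lipschitz bound on $D\mathcal O$ given by the $C^2$ hypothesis, not from the $C^1$ regularity of the branch alone.
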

\begin{proof}
For each $\boldsymbol y$, the affine set
$\{h:G_0h=\boldsymbol y\}$ is nonempty and weakly closed.  Reflexivity of
$X$ gives existence of a minimum-norm element and strict convexity gives its
uniqueness.  Thus $\rho_0$ is the quotient norm induced by $G_0$, hence is
equivalent to the Euclidean norm on $\mathbb R^M$; uniqueness also gives the
oddness, homogeneity, and continuity of $\mathscr R_0$.

Choose a bounded linear right inverse $L:\mathbb R^M\to X$ of $G_0$.  Put
\[
 r_{\boldsymbol y}
 :=\mathcal O(q_0+\mathscr R_0(\boldsymbol y))
   -\mathcal O_0-\boldsymbol y=o(|\boldsymbol y|).
\]
For
$\Psi_{\boldsymbol y}(\boldsymbol x)
 :=\mathcal O(q_0+\mathscr R_0(\boldsymbol y)+L\boldsymbol x)
   -\mathcal O_0-\boldsymbol y$
we have $D_{\boldsymbol x}\Psi_{\boldsymbol y}(0)\to G_0L=I$ as
$\boldsymbol y\to0$. Here are the details needed for a
uniform correction.  Define
\[
 \mathcal K_{\boldsymbol y}(\boldsymbol x)
 :=\boldsymbol x-\Psi_{\boldsymbol y}(\boldsymbol x).
\]
Because $D\mathcal O$ is continuous and
$\mathscr R_0(\boldsymbol y)\to0$, there exists a fixed neighborhood of the
origin such that, for all sufficiently small $\boldsymbol y$,
\[
 \|D\mathcal K_{\boldsymbol y}(\boldsymbol x)\|
 =\|I-D\mathcal O(q_0+\mathscr R_0(\boldsymbol y)
                  +L\boldsymbol x)L\|\le\frac12
\]
throughout that neighborhood.  Since
$\mathcal K_{\boldsymbol y}(0)=-r_{\boldsymbol y}$, the map
$\mathcal K_{\boldsymbol y}$ sends the ball
$\overline B(0,2|r_{\boldsymbol y}|)\subset\mathbb R^M$ into itself and is a
contraction there once $\boldsymbol y$ is small.  Its unique fixed point
$\boldsymbol x_{\boldsymbol y}$ satisfies
\[
 |\boldsymbol x_{\boldsymbol y}|\le2|r_{\boldsymbol y}|
 =o(|\boldsymbol y|),
 \qquad
 \Psi_{\boldsymbol y}(\boldsymbol x_{\boldsymbol y})=0.
\]
Thus
$q_{\boldsymbol y}^{\rm c}:=q_0+\mathscr R_0(\boldsymbol y)
+L\boldsymbol x_{\boldsymbol y}$ satisfies
\eqref{eq:abstract-feasible-correction}.

Weak sequential continuity makes each nearby level set weakly sequentially
closed, so the direct method gives a minimizer of
\eqref{eq:abstract-min-problem}.  The feasible comparison above shows that
$h_{\boldsymbol y}:=\widehat q_{\boldsymbol y}-q_0$ satisfies
$\|h_{\boldsymbol y}\|_X=O(|\boldsymbol y|)$.  Fr\'echet differentiability gives
\[
 \boldsymbol y=G_0h_{\boldsymbol y}+o(\|h_{\boldsymbol y}\|_X).
\]
The quotient norm therefore yields
$\rho_0(\boldsymbol y)\le\|h_{\boldsymbol y}\|_X+o(|\boldsymbol y|)$,
while the comparison potential gives the reverse upper estimate.  Hence
\eqref{eq:abstract-distance-asymptotic} holds.  If
$\boldsymbol y_n\to0$ and
$\boldsymbol e_n=\boldsymbol y_n/\rho_0(\boldsymbol y_n)$,
$w_n=h_{\boldsymbol y_n}/\rho_0(\boldsymbol y_n)$, then every weak limit of
$w_n$ is a unit minimum-norm solution of $G_0w=\lim\boldsymbol e_n$ and is
therefore $\mathscr R_0(\lim\boldsymbol e_n)$.  The Kadec--Klee property of
$L_d^p$ upgrades the convergence to strong convergence, proving
\eqref{eq:abstract-min-asymptotic}.

Now let $p=2$.  Orthogonal decomposition gives
$\mathbb M_0=G_0G_0^*>0$ and
\eqref{eq:abstract-Hilbert-right-inverse}.  Since surjectivity persists near
$q_0$, every sufficiently small-data minimizer satisfies the KKT system
\[
 h-D\mathcal O(q_0+h)^*\boldsymbol\lambda=0,
 \qquad
 \mathcal O(q_0+h)-\mathcal O_0-\boldsymbol y=0.
\]
Its linearization in $(h,\boldsymbol\lambda)$ at the origin is
\[
 (v,\boldsymbol\mu)\longmapsto
 \bigl(v-G_0^*\boldsymbol\mu,\,G_0v\bigr),
\]
which is an isomorphism because $\mathbb M_0$ is positive definite.  The
implicit function theorem gives a unique local critical branch.  By
\eqref{eq:abstract-distance-asymptotic}, every global minimizer lies in this
neighborhood; the uniform lower bound for
$D\mathcal O(q)^*$ also gives $|\boldsymbol\lambda|=O(|\boldsymbol y|)$.
Thus all minimizers coincide with the IFT branch.  A second-order expansion of
the KKT system yields
$\boldsymbol\lambda=\mathbb M_0^{-1}\boldsymbol y+O(|\boldsymbol y|^2)$ and
\eqref{eq:abstract-Hilbert-expansion}; squaring gives
\eqref{eq:abstract-Hilbert-cost}.
\end{proof}

\subsection{The direct relationship between the observed node and the minimal distance}
For the general reference potential fixed above, set
\[
 J_{p'}(g_0):=|g_0|^{p'-2}g_0.
\]

\begin{theorem}\label{thm:local-asymptotic}
Assume \eqref{eq:main-p-assumption}.  For sufficiently small $\delta$, let $\widehat q_\delta$ be any minimizer satisfying
\[
 T_{i,m}(\widehat q_\delta)=T_0+\delta.
\]
Then, as $\delta\to0$,
\begin{equation}\label{eq:local-reconstruction-main}
 \widehat q_\delta-q_0
 =\frac{\delta}{\|g_0\|_{L_d^{p'}}^{p'}}J_{p'}(g_0)+o_{L_d^p}(|\delta|),
\end{equation}
and
\begin{equation}\label{eq:distance-main}
 \|\widehat q_\delta-q_0\|_{L_d^p}
 =\frac{|\delta|}{\|g_0\|_{L_d^{p'}}}+o(|\delta|).
\end{equation}
Consequently, to leading order, a rightward displacement of the node increases the optimal potential on $(0,T_0)$ and decreases it on $(T_0,R)$; a leftward displacement reverses these signs.
\end{theorem}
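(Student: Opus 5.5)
The plan is to obtain Theorem~\ref{thm:local-asymptotic} as the scalar case $M=1$ of Proposition~\ref{prop:finite-observation-principle}, with $X=L_d^p$ and $\mathcal O:=T_{i,m}$. The hypotheses of that proposition are supplied by results already proved. Theorem~\ref{thm:node-calculus} shows that $T_{i,m}$ is weakly sequentially continuous and continuously Fr\'echet differentiable on all of $L_d^p$. The same theorem gives $G_0=DT_{i,m}(q_0)=\langle g_0,\cdot\rangle$ with $g_0\in L_d^{p'}$ and $g_0\neq0$, since $A_0,B_0,\Gamma_0>0$. A nonzero functional into $\mathbb R$ is surjective. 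Hence parts (i) and (ii) of the proposition apply with $\boldsymbol y=\delta$, and every minimizer with $T_{i,m}(\widehat q_\delta)=T_0+\delta$ satisfies
\[
\widehat q_\delta-q_0=\mathscr R_0(\delta)+o_{L_d^p}(|\delta|),
\qquad
\|\widehat q_\delta-q_0\|_{L_d^p}=\rho_0(\delta)+o(|\delta|).
\]
Existence of minimizers is already given by Theorem~\ref{thm:existence}.

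The remaining task is to compute $\mathscr R_0$ and $\rho_0$ explicitly for the single functional $h\mapsto\int_0^R g_0h\dd\mu_d$. By H\"older's inequality, every $h$ with $\int g_0h\dd\mu_d=\delta$ satisfies $|\delta|\le\|g_0\|_{L_d^{p'}}\|h\|_{L_d^p}$. Therefore $\rho_0(\delta)\ge|\delta|/\|g_0\|_{L_d^{p'}}$.

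Now set $h_\delta:=\delta\,J_{p'}(g_0)/\|g_0\|_{L_d^{p'}}^{p'}$. Since $g_0J_{p'}(g_0)=|g_0|^{p'}$, this $h_\delta$ satisfies the constraint $\int g_0h_\delta\dd\mu_d=\delta$. Using $(p'-1)p=p'$,
\[
\|J_{p'}(g_0)\|_{L_d^p}^p=\int|g_0|^{(p'-1)p}\dd\mu_d=\|g_0\|_{L_d^{p'}}^{p'},
\]
so $\|h_\delta\|_{L_d^p}=|\delta|\,\|g_0\|_{L_d^{p'}}^{p'/p-p'}=|\delta|/\|g_0\|_{L_d^{p'}}$. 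The H\"older bound is thus attained by $h_\delta$. By uniqueness of the minimum-norm element (strict convexity of $L_d^p$), $\mathscr R_0(\delta)=h_\delta$ and $\rho_0(\delta)=|\delta|/\|g_0\|_{L_d^{p'}}$. Substituting these into the two displayed asymptotics gives \eqref{eq:local-reconstruction-main} and \eqref{eq:distance-main}.

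For the sign statement, note that $J_{p'}(g_0)$ has the same sign as $g_0$. From Lemma~\ref{lem:node-derivative}, $g_0$ is a positive multiple of $|E_m^0|^2$ on $(0,T_0)$ and a negative multiple on $(T_0,R)$, away from the finitely many zeros of $E_m^0$. Hence the leading term is positive on $(0,T_0)$ and negative on $(T_0,R)$ when $\delta>0$, and the signs reverse when $\delta<0$.

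No step here is a serious obstacle; the work is in checking the hypotheses. The one point to get right is that part (ii) of the proposition covers \emph{every} minimizer, which is what the theorem claims. The proposition does assert this, via the feasible comparison potential and the Kadec--Klee argument, so no further argument is needed. One should also observe that the $o(|\delta|)$ remainder is measured in $L_d^p$, whereas the sign description holds only to leading order, exactly as the theorem states it.
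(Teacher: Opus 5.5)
Your proposal is correct and matches the paper's proof. Both apply Proposition~\ref{prop:finite-observation-principle} with $M=1$ and $\mathcal O=T_{i,m}$, take the hypotheses from Theorem~\ref{thm:node-calculus}, identify $\mathscr R_0(\delta)$ and $\rho_0(\delta)$ through H\"older's inequality, and read off the sign pattern from the explicit form of $g_0$; your check that $J_{p'}(g_0)$ attains the H\"older bound simply writes out the computation the paper states in one line.
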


\begin{proof}[Proof of Theorem~\ref{thm:local-asymptotic}]
Apply Proposition~\ref{prop:finite-observation-principle} with
$M=1$ and $\mathcal O(q)=T_{i,m}(q)$.  Theorem~\ref{thm:node-calculus}
gives weak sequential continuity and $C^1$ regularity, while
$G_0h=\int_0^R g_0h\,\dd\mu_d$ is onto because $g_0\ne0$.  H\"older's
inequality shows that the unique minimum-norm solution of $G_0h=\delta$ is
\[
 \mathscr R_0(\delta)
 =\frac{\delta}{\|g_0\|_{L_d^{p'}}^{p'}}J_{p'}(g_0),
 \qquad
 \rho_0(\delta)=\frac{|\delta|}{\|g_0\|_{L_d^{p'}}}.
\]
Equations \eqref{eq:local-reconstruction-main} and
\eqref{eq:distance-main} are therefore exactly
\eqref{eq:abstract-min-asymptotic} and
\eqref{eq:abstract-distance-asymptotic}.  The sign pattern follows from the
explicit two-step formula \eqref{eq:node-derivative-main} for $g_0$.
\end{proof}

\begin{corollary}\label{cor:local-sign}
Let $\widehat q_\delta$ be a nontrivial optimal potential and let $\varepsilon_\delta$ be the sign in Theorem~\ref{thm:critical-system}.
For all sufficiently small $\delta\ne0$,
\(\varepsilon_\delta=\sgn\delta\).
\end{corollary}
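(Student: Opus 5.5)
We compare the exact critical structure of Theorem~\ref{thm:critical-system} with the leading-order asymptotics of Theorem~\ref{thm:local-asymptotic}. For $\delta\ne0$ small, $T_0+\delta\ne T_0$, so $\widehat q_\delta\ne q_0$ by Remark~\ref{rem:trivial}. Theorem~\ref{thm:critical-system} then provides $\kappa_\delta\ne0$ with
\[
 \varphi_p(\widehat q_\delta-q_0)=\kappa_\delta\, g_{i,m}(\cdot;\widehat q_\delta),
 \qquad \varepsilon_\delta=\sgn\kappa_\delta .
\]
On $(0,T_0+\delta)$ we have $g_{i,m}(\cdot;\widehat q_\delta)\ge0$, and it is strictly positive away from the zeros of $E$. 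Hence on this interval $\sgn(\widehat q_\delta-q_0)=\varepsilon_\delta$ almost everywhere off a null set. The task is therefore to identify the sign of $\widehat q_\delta-q_0$ on a fixed set of positive measure.

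To do this, we test against a fixed function. Let $\chi:=\ind_{(0,T_0/2)}$ and pair both sides of the asymptotic expansion \eqref{eq:local-reconstruction-main} with $\chi$:
\[
 \int_0^{T_0/2}(\widehat q_\delta-q_0)\dd\mu_d
 =\frac{\delta}{\|g_0\|_{L_d^{p'}}^{p'}}\int_0^{T_0/2}|g_0|^{p'-2}g_0\dd\mu_d+o(|\delta|).
\]
On $(0,T_0/2)$ we have $g_0=(B_0/\Gamma_0)|E_m^0|^2\ge0$, and $g_0$ is not identically zero there, since $E_m^0$ vanishes only at finitely many points. So the coefficient of $\delta$ is a strictly positive constant $c_0$, and the left side equals $c_0\delta+o(|\delta|)$. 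For $|\delta|$ small, its sign is therefore $\sgn\delta$.

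On the other hand, for $|\delta|<T_0/2$ we have $(0,T_0/2)\subset(0,T_0+\delta)$. By the first paragraph, $\widehat q_\delta-q_0$ has the constant sign $\varepsilon_\delta$ almost everywhere on this interval, so the integral on the left has sign $\varepsilon_\delta$ (or vanishes). Since we have shown the integral equals $c_0\delta+o(|\delta|)$, it is nonzero with sign $\sgn\delta$. This forces $\varepsilon_\delta=\sgn\delta$.

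The one point needing care is that the $o(|\delta|)$ remainder is uniform over all minimizers with node $T_0+\delta$. This holds because Theorem~\ref{thm:local-asymptotic} is stated for every minimizer, through Proposition~\ref{prop:finite-observation-principle}(ii). No smallness of $\kappa_\delta$ and no convergence of $g_{i,m}(\cdot;\widehat q_\delta)$ is needed, because only the sign pattern of the exact kernel is used.
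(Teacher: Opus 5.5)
Your proof is correct and is essentially the paper's argument: both take the fixed sign of $\widehat q_\delta-q_0$ on a fixed subinterval of $(0,T_0)$, which comes from the exact Lagrange relation of Theorem~\ref{thm:critical-system}, and play it against the first-order expansion of Theorem~\ref{thm:local-asymptotic}. The differences are minor. The paper argues by contradiction along a sequence, using $L_d^p(K)$ convergence to a limit bounded below on a compact $K$ that avoids the zeros of $E_m^0$. You instead read off the sign directly by pairing with $\ind_{(0,T_0/2)}$, and you correctly note that this needs the remainder to be uniform over minimizers, which Theorem~\ref{thm:local-asymptotic} supplies.
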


\begin{proof}
Choose a compact interval $K\Subset(0,T_0)$ which avoids all zeros of
$E_m(\cdot;q_0)$ and has positive measure.  Then $g_0>0$ on $K$ and, after shrinking $K$ if necessary,
\[
 g_0(r)=\frac{B_0}{\Gamma_0} \left| E_m(r; q_0) \right|^2\ge c_K>0\qquad\text{for }r\in K.
\]
For all sufficiently small $|\delta|$, we have $K\subset(0,T_0+\delta)$.  Suppose, along a sequence $\delta_n\to0$, that
$\varepsilon_{\delta_n}=-\sgn\delta_n$.  By
\eqref{eq:q-reconstruction-U},
\[
 \sgn\delta_n\,
 (\widehat q_{\delta_n}-q_0)\le0
 \quad\text{a.e. on }K.
\]
On the other hand, \eqref{eq:local-reconstruction-main} implies
\[
 \frac{\sgn\delta_n}{|\delta_n|}
 (\widehat q_{\delta_n}-q_0)
 \longrightarrow
 \frac{J_{p'}(g_0)}{\|g_0\|_{L_d^{p'}}^{p'}}
 \quad\text{in }L_d^p(K),
\]
and the limit is bounded below by a positive constant on $K$.
This is impossible for a sequence which is nonpositive almost everywhere on $K$.  Hence
$\varepsilon_\delta=\sgn\delta$ for all sufficiently small nonzero $\delta$.
\end{proof}

\subsection{The uniqueness of the optimal potential}

\begin{lemma}\label{lem:nodal-flux-C1}
Let $d\in\{2,3\}$ and $p=2$.  Use the locally sign-fixed, $L_d^2$-normalized eigenfunction supplied
by Proposition~\ref{prop:eigenpair-differentiability}, and write
\[
 T=T_{i,m}(q),\qquad E=E_m(\cdot;q),\qquad
 \Pi(q):=T_{i,m}(q)^{d-1}E_m'(T_{i,m}(q);q).
\]
Then $E'$ is differentiable with respect to the radial variable at the simple node $T$, in the difference-quotient sense, and
\begin{equation}\label{eq:nodal-curvature}
 E''(T)=-\frac{d-1}{T}E'(T).
\end{equation}
Moreover, $\Pi:L_d^2\to\R$ is continuously Fr\'echet differentiable. If $Z=DE_m(q)[h]$, then
\begin{equation}\label{eq:nodal-flux-derivative}
 D\Pi(q)[h]=T^{d-1}Z'(T).
\end{equation}
Consequently, the nodal slope $S(q):=E_m'(T_{i,m}(q);q)$ and the quantity
$\Gamma(q)=T_{i,m}(q)^{d-1}S(q)^2$ are $C^1$, with
\begin{align}
 DS(q)[h]
 &=Z'(T)-\frac{d-1}{T}E'(T)DT_{i,m}(q)[h],\label{eq:nodal-slope-derivative}\\
 D\Gamma(q)[h]
 &=2T^{d-1}E'(T)Z'(T)
 -(d-1)T^{d-2}E'(T)^2DT_{i,m}(q)[h].
 \label{eq:Gamma-derivative}
\end{align}
\end{lemma}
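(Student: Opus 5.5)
The plan is to work with the flux $P(r):=r^{d-1}E'(r)$ rather than with $E'$ itself, because $P$ is the quantity the weak equation controls. Near the simple node $T$ we have $T>0$. On a compact interval $[a,R]$ with $0<a<T$, the eigenvalue equation gives
\[
 P'(r)=r^{d-1}\Big(\frac{\gamma}{r^2}+q-\lambda\Big)E
\]
in the sense of absolutely continuous functions. Here $q\in L^2(a,R)$ and $E$ is continuous with $E(T)=0$.

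The first claim, \eqref{eq:nodal-curvature}, follows by estimating the increment of $P$ near $T$. For $|r-T|\le\eta$,
\[
 |P(r)-P(T)|\le C\int_{T-\eta}^{T+\eta}(1+|q|)|E|\dd s .
\]
Since $|E(s)|\le C|s-T|$, this is bounded by $C\eta\int_{T-\eta}^{T+\eta}(1+|q|)\dd s$. By Cauchy--Schwarz, $\int_{T-\eta}^{T+\eta}|q|\dd s\le \|q\|_{L^2(T-\eta,T+\eta)}(2\eta)^{1/2}=o(\eta^{1/2})$. Hence $P(r)-P(T)=o(|r-T|^{3/2})$, and in particular $o(|r-T|)$, so $P'(T)=0$ in the difference-quotient sense. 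Writing $E'=r^{1-d}P$ and using the product rule for difference quotients gives $E''(T)=(1-d)T^{-d}P(T)=-\frac{d-1}{T}E'(T)$. This step only needs $p\ge1$ locally, so it is routine.

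For the differentiability of $\Pi(q)=T_{i,m}(q)^{d-1}E_m'(T_{i,m}(q);q)$, consider the map $(\widetilde q,r)\mapsto P(r;\widetilde q)=r^{d-1}E_m'(r;\widetilde q)$. By Proposition~\ref{prop:eigenpair-differentiability}, $\widetilde q\mapsto E_m(\cdot;\widetilde q)\in C^1([a,R])$ is $C^1$, so $\widetilde q\mapsto P(\cdot;\widetilde q)\in C([a,R])$ is $C^1$ with derivative $r^{d-1}Z'$. Composing with the $C^1$ node map $T_{i,m}$ (Theorem~\ref{thm:node-calculus}) gives
\[
 \Pi(q+h)-\Pi(q)=\big[P(T_h;q+h)-P(T_h;q)\big]+\big[P(T_h;q)-P(T;q)\big],
\]
with $T_h=T_{i,m}(q+h)$. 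By uniform $C^1$ dependence and continuity of $T_h$, the first bracket is $T^{d-1}Z'(T)+o(\|h\|)$. By the estimate of the previous paragraph and $|T_h-T|=O(\|h\|)$, the second bracket is $o(\|h\|)$.

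The main obstacle is continuity of $D\Pi$, which also requires that the previous estimates be uniform in $\widetilde q$ near $q$. We need $\widetilde q\mapsto \widetilde T^{d-1}\widetilde Z'(\widetilde T)$ to be continuous in operator norm. This follows from continuity of $DE_m$ into $\mathcal L(L_d^2,C^1([a,R]))$, continuity of $T_{i,m}$, and equicontinuity of $Z'$ uniformly in $\|h\|\le1$. That equicontinuity we would get from the $W^{2,s}$ bound \eqref{eq:local-derivative-bound}, which gives a uniform Hölder-$1/2$ modulus for $Z'$ when $s=2$. The hypothesis $d\in\{2,3\}$ with $p=2$ presumably guarantees $p>d/2$, which is the standing assumption \eqref{eq:main-p-assumption}. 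For the same reason, the $o(|r-T|)$ bound on $P$ holds uniformly once $\|\widetilde q\|_{L^2}$ is bounded: we use the equi-integrability of $|\widetilde q|^2$ along convergent sequences, together with $|\widetilde E(s)|\le C|s-\widetilde T|$ uniformly from the $C^1$ bounds.

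Finally, write $S=T^{1-d}\Pi$. The chain rule together with \eqref{eq:Tdot-first} gives
\[
 DS=T^{1-d}D\Pi-(d-1)T^{-d}\Pi\, DT_{i,m},
\]
which is \eqref{eq:nodal-slope-derivative}. Then \eqref{eq:Gamma-derivative} follows from $\Gamma=T^{1-d}\Pi^2$ by the chain rule, since
\[
 D\Gamma=2T^{1-d}\Pi\,D\Pi-(d-1)T^{-d}\Pi^2\,DT_{i,m},
\]
which is the stated formula.
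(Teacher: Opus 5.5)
Your proposal is correct and follows essentially the paper's route. The flux increment near the node is $o(|r-T|)$ by the Cauchy--Schwarz bound on $\int_{I_\eta}|q||E|$; $D\Pi$ comes from splitting $\Pi(q+h)-\Pi(q)$ into a fixed-radius variation and a node-displacement variation; and continuity of $D\Pi$ follows from $DE_m(q_n)\to DE_m(q)$ together with the $C^{1,1/2}$ equicontinuity supplied by the $W^{2,2}$ bound.

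The one difference is that you displace the node along the unperturbed flux $P(\cdot;q)$, while the paper displaces along $P_h=r^{d-1}E_h'$. Your ordering makes the second bracket immediate. The first bracket then needs $Z'(T_h)-Z'(T)=o(\|h\|)$, which requires the H\"older-$1/2$ modulus of $Z'$ with constant $C\|h\|$, not merely continuity of $T_h$. You do cite that modulus, but only one paragraph later.
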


\begin{proof}
Because $T>0$, the weighted and unweighted $L^2$ norms are equivalent on a fixed neighborhood of $T$.  By local $W^{2,2}$ regularity, $E\in C^1$ there, and the simple-node identity $E(T)=0$ gives $|E(r)|\le C|r-T|$.  Integrating the radial equation between $T$ and $T+\eta$ gives
\begin{align*}
 E'(T+\eta)-E'(T)
 &=-\int_T^{T+\eta}\frac{d-1}{r}E'(r)\dd r
 +\int_T^{T+\eta}
 \left(\frac{\gamma}{r^2}+q(r)-\lambda\right)E(r)\dd r.
\end{align*}
After division by $\eta$, the first term converges to $-(d-1)E'(T)/T$.  For the potential term, with $I_\eta$ the interval between $T$ and $T+\eta$,
\[
 \frac1{|\eta|}\left|\int_{I_\eta}qE\dd r\right|
 \le C|\eta|^{1/2}\|q\|_{L^2(I_\eta)}\longrightarrow0,
\]
while the centrifugal and $\lambda E$ contributions are both $O(|\eta|)$.  This proves \eqref{eq:nodal-curvature}.
For $h\to0$ in $L_d^2$, set
\begin{align*}
 E_h&=E_m(\cdot;q+h), & T_h&=T_{i,m}(q+h),
 & \lambda_h&=\lambda_m(q+h),\\
 P_h(r)&=r^{d-1}E_h'(r), & P_0(r)&=r^{d-1}E'(r).
\end{align*}
and let $Z=DE_m(q)[h]$.  The local $W^{2,2}$ differentiability in
Proposition~\ref{prop:eigenpair-differentiability} gives
\begin{equation}\label{eq:fixed-flux-expansion}
 P_h(T)-P_0(T)-T^{d-1}Z'(T)=o(\|h\|_{L_d^2}).
\end{equation}
Since $T_h-T=O(\|h\|_{L_d^2})$, the eigenfunctions are uniformly $C^1$ near $T$, and $E_h(T_h)=0$ implies $|E_h(r)|\le C|r-T_h|$.  The flux equation yields
\[
 P_h(T_h)-P_h(T)
 =\int_T^{T_h}
 \left[\gamma s^{d-3}+s^{d-1}(q(s)+h(s)-\lambda_h)\right]
 E_h(s)\dd s.
\]
If $I_h$ is the interval between $T$ and $T_h$, then
\begin{align*}
 |P_h(T_h)-P_h(T)|
 &\le C\|q+h\|_{L^2(I_h)}|T_h-T|^{3/2}
 +C|T_h-T|^2\\
 &=o(\|h\|_{L_d^2}),
\end{align*}
Together with \eqref{eq:fixed-flux-expansion}, this proves
\eqref{eq:nodal-flux-derivative}.
Let $q_n\to q$ in $L_d^2$ and choose $a<T/2$.  Then $T(q_n)\to T(q)$ and
\[
 DE_m(q_n)\longrightarrow DE_m(q)
 \quad\text{in }\mathcal L(L_d^2,W^{2,2}(a,R))
\]
by Proposition~\ref{prop:eigenpair-differentiability}.  The embedding
$W^{2,2}(a,R)\hookrightarrow C^{1,1/2}([a,R])$ makes the unit images equicontinuous in their first derivatives.  Formula \eqref{eq:nodal-flux-derivative} therefore converges in operator norm, so $\Pi$ is $C^1$.  Finally,
\[
 S(q)=\frac{\Pi(q)}{T(q)^{d-1}},\qquad
 \Gamma(q)=\frac{\Pi(q)^2}{T(q)^{d-1}},
\]
and differentiation gives \eqref{eq:nodal-slope-derivative} and
\eqref{eq:Gamma-derivative}. The proof is complete.
\end{proof}

\begin{proposition}\label{prop:Hilbert-C2}
Let $d\in\{2,3\}$ and $p=2$.  For every $q\in L_d^2$, the nodal map
$q\mapsto T_{i,m}(q)$ is $C^2$, and its $L_d^2$ gradient
$q\mapsto g_{i,m}(\cdot;q)$ is a $C^1$ map from $L_d^2$ to $L_d^2$.
\end{proposition}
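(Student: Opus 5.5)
Since $DT_{i,m}(q)[h]=\langle g_{i,m}(\cdot;q),h\rangle_{L_d^2}$ by Theorem~\ref{thm:node-calculus}, the $C^2$ regularity of $T_{i,m}$ is equivalent to $q\mapsto g_{i,m}(\cdot;q)$ being $C^1$ from $L_d^2$ into $L_d^2=L_d^{p'}$. So I would prove only the second assertion, by differentiating the explicit formula
\[
 g_{i,m}(r;q)=\frac{B(q)}{\Gamma(q)}E(r)^2\ind_{(0,T(q))}(r)-\frac{A(q)}{\Gamma(q)}E(r)^2\ind_{(T(q),R)}(r)
\]
factor by factor. The scalar ingredients are already in hand. $\Gamma$ is $C^1$ by Lemma~\ref{lem:nodal-flux-C1}, and $\Gamma>0$, so $1/\Gamma$ is $C^1$. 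For $T$, use Theorem~\ref{thm:node-calculus}. For $A(q)=\int_0^{T(q)}E^2\dd\mu_d$ and $B=1-A$, the map $q\mapsto E^2$ is $C^1$ into $L_d^1$ because $E$ is $C^1$ into $\mathcal H_{\ell,0}\hookrightarrow L_d^2$. Adding the boundary term $T^{d-1}E(T)^2\,DT=0$ (which vanishes since $E(T)=0$) gives
\[
 DA[h]=2\int_0^T EZ\dd\mu_d .
\]
Continuity of this in operator norm follows from Proposition~\ref{prop:eigenpair-differentiability} together with continuity of $T$.

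The function-valued pieces are the maps $q\mapsto E^2\ind_{(0,T(q))}$ and $q\mapsto E^2\ind_{(T(q),R)}$ into $L_d^2$. For these I need $E^2$ to be $C^1$ into $L_d^2$, i.e.\ $E$ to be $C^1$ into $L_d^4$. This is where $d\in\{2,3\}$ enters: $\mathcal H_{\ell,0}$ corresponds to $H_0^1(B_R)\hookrightarrow L^4$ for $d\le4$, so $E\mapsto E^2$ is a smooth bilinear map $L_d^4\to L_d^2$ with derivative $2EZ$. The genuinely new point is differentiating the moving indicator. Here the key observation is that $E$ vanishes linearly at the node. Writing $E=E(\cdot;q+h)$ and $T_h=T(q+h)$,
\[
 \bigl\|E^2(\ind_{(0,T_h)}-\ind_{(0,T)})\bigr\|_{L_d^2}^2\le\int_{I_h}E^4\dd\mu_d\le C|T_h-T|^5,
\]
using $|E(r)|\le C|r-T_h|$ on a fixed neighborhood, with uniformity coming from the $C^1([a,R])$ convergence of Proposition~\ref{prop:eigenpair-compactness}. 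This is $o(\|h\|)$. Hence the derivative is simply $2EZ\ind_{(0,T)}$, with no delta contribution from the moving node. The same estimate applied to $E_n^2$ and $E^2$ at $q_n\to q$ gives operator-norm continuity of these derivatives.

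Combining by the product rule,
\[
 Dg[h]=D\!\left(\tfrac{B}{\Gamma}\right)[h]\,E^2\ind_{(0,T)}+\tfrac{B}{\Gamma}\,2EZ\ind_{(0,T)}-(\text{analogous terms on }(T,R)),
\]
which is continuous in $q$ in $\mathcal L(L_d^2,L_d^2)$.

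I expect the main obstacle to be the operator-norm continuity of $h\mapsto EZ\ind_{(0,T)}$ in $L_d^2$, uniformly for $\|h\|\le1$. This needs $\sup_{\|h\|\le1}\|Z_n-Z\|_{L_d^4}\to0$, which follows from \eqref{eq:H-derivative-continuity} together with the $L^4$ embedding, plus the moving-indicator estimate above. I would check carefully that the embedding $\mathcal H_{\ell,0}\hookrightarrow L_d^4$ holds uniformly in the effective dimension, including $d=3$, $\ell\ge1$. It does, because it is inherited from $H_0^1(B_R)$ via \eqref{eq:sector-isometry}.
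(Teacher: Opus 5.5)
Your proposal is correct and follows essentially the paper's own argument. The shared steps are:
\begin{enumerate}
\item reduce $C^2$ regularity of $T_{i,m}$ to $C^1$ regularity of $g_{i,m}$ into $L_d^2$ via the Riesz identification;
\item take $\Gamma$ from Lemma~\ref{lem:nodal-flux-C1};
\item use $\mathcal H_{\ell,0}\hookrightarrow L_d^4$ to differentiate $E^2$ into $L_d^2$;
\item dispose of the moving indicator with the $|T_h-T|^{5/2}$ strip estimate, which comes from the linear vanishing of $E_h$ at $T_h$.
\end{enumerate}

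The differences are minor. You differentiate $A$ directly, whereas the paper obtains it as $\int_0^R E^2\ind_{(0,T(q))}\dd\mu_d$. Also, for operator-norm continuity the strip quantity to control is $E_nZ_n[h]$ between $T_n$ and $T$, not $E_n^2$. That term is handled by a uniform bound on $Z_n[h]$, either in $L^\infty$ near $T$ or in $L_d^4$.
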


\begin{proof}
For $d=2,3$, the embedding $ \mathcal H_{\ell,0}\hookrightarrow L_d^4$ is continuous, so
\[
 (q,u,\phi)\longmapsto\int_0^R qu\phi\dd\mu_d
\]
is a continuous trilinear form on $L_d^2\times \mathcal H_{\ell,0}\times \mathcal H_{\ell,0}$.
The implicit-function map defining the simple normalized eigenpair is therefore smooth with values
in $\R\times \mathcal H_{\ell,0}$.  By Proposition~\ref{prop:eigenpair-differentiability},
it is also $C^1$ with values in $W^{2,2}(a,R)$ for every $a>0$.  The nodal map is already $C^1$ by
Theorem~\ref{thm:node-calculus}.
Define
\[
 \mathcal P(q):=E_m(\cdot;q)^2\ind_{(0,T_{i,m}(q))}.
\]
For an increment $h$, put $E_h=E(q+h)$, $T_h=T(q+h)$,
$Z=DE(q)[h]$, and $R_h=E_h-E-Z$.  Since the eigenpair map is differentiable in $ \mathcal H_{\ell,0}\hookrightarrow L_d^4$,
\[
 \|R_h\|_{L_d^4}=o(\|h\|_{L_d^2}),
 \qquad \|Z\|_{L_d^4}=O(\|h\|_{L_d^2}).
\]
The exact identity
\begin{align*}
 &\mathcal P(q+h)-\mathcal P(q)-2EZ\ind_{(0,T)}\\
 &\quad=(E_h^2-E^2-2EZ)\ind_{(0,T)}
 +E_h^2\big(\ind_{(0,T_h)}-\ind_{(0,T)}\big)\\
&\quad=(2ER_h+(Z+R_h)^2)\ind_{(0,T)}+E_h^2\big(\ind_{(0,T_h)}-\ind_{(0,T)}\big)
\end{align*}
holds for either sign of $T_h-T$.  The first term is
$o(\|h\|_{L_d^2})$ in $L_d^2$.  On the moving strip between $T$ and $T_h$, the uniform local $C^1$ bound and $E_h(T_h)=0$ imply
\[
 \left\|E_h^2\big(\ind_{(0,T_h)}-\ind_{(0,T)}\big)\right\|_{L_d^2}
 \le C|T_h-T|^{5/2}
 =o(\|h\|_{L_d^2}).
\]
Consequently, \(D\mathcal P(q)[h]=2E\,DE(q)[h]\ind_{(0,T)}\). We now verify continuity in operator norm.  Let $q_n\to q$ in $L_d^2$, and write
\[
 E_n=E(q_n),\qquad T_n=T(q_n),\qquad
 Z_n[h]=DE(q_n)[h].
\]
The smooth eigenpair map in $ \mathcal H_{\ell,0}$ gives
\[
 E_n\to E\quad\hbox{in }L_d^4,
 \qquad
 DE(q_n)\to DE(q)
 \quad\hbox{in }\mathcal L(L_d^2,L_d^4).
\]
On the common part of $(0,T_n)$ and $(0,T)$, H\"older's inequality therefore yields
\begin{align*}
 &\sup_{\|h\|_{L_d^2}\le1}
 \|E_nZ_n[h]-EZ[h]\|_{L_d^2}\\
 &\quad\le
 \|E_n-E\|_{L_d^4}
 \sup_{\|h\|_{L_d^2}\le1}\|Z_n[h]\|_{L_d^4}
 +\|E\|_{L_d^4}
 \|DE(q_n)-DE(q)\|_{\mathcal L(L_d^2,L_d^4)}
 \longrightarrow0.
\end{align*}
Choose a compact interval $K\Subset(0,R)$ containing $T_n$ and $T$ for all large $n$.  The local $W^{2,2}$ estimates imply the uniform bounds
\[
 |E_n(r)|\le C|r-T_n|,
 \qquad
 \|Z_n[h]\|_{L^\infty(K)}
 \le C\|h\|_{L_d^2}.
\]
If $I_n$ is the interval between $T_n$ and $T$, then
\[
 \sup_{\|h\|_{L_d^2}\le1}
 \|E_nZ_n[h]\ind_{I_n}\|_{L_d^2}
 \le C|T_n-T|^{3/2}\longrightarrow0,
\]
and the analogous estimate holds for $EZ[h]\ind_{I_n}$.  Combining the common-interval and moving-strip estimates gives
\[
 \|D\mathcal P(q_n)-D\mathcal P(q)\|_{
 \mathcal L(L_d^2,L_d^2)}\longrightarrow0.
\]
Thus $\mathcal P:L_d^2\to L_d^2$ is $C^1$.  The right-hand cutoff $E^2\ind_{(T,R)}=E^2-\mathcal P(q)$ is $C^1$ as well.
It follows that
\[
 A(q)=\int_0^R\mathcal P(q)\dd\mu_d,
 \qquad B(q)=1-A(q)
\]
are $C^1$.  By Lemma~\ref{lem:nodal-flux-C1}, $\Gamma(q)$ is $C^1$ and remains positive.
 Formula \eqref{eq:node-derivative-main} therefore shows that
$q\mapsto g_{i,m}(\cdot;q)$ is a $C^1$ map from $L_d^2$ to $L_d^2$.
Since
\(DT_{i,m}(q)[h]=\langle g_{i,m}(\cdot;q),h\rangle_{L_d^2}\),
the Riesz representation identifies
$DT_{i,m}(q)$ with $g_{i,m}(\cdot;q)$.
By the preceding argument, \(q\longmapsto g_{i,m}(\cdot;q)\)
is a $C^1$ map from $L_d^2$ to $L_d^2$.
Since the Riesz isomorphism
\(L_d^2\longrightarrow (L_d^2)^*\)
is continuous and linear, the map
\[
q\longmapsto DT_{i,m}(q)
\]
is $C^1$ with values in
$\mathcal L(L_d^2,\mathbb R)$.
Hence $T_{i,m}$ is $C^2$. Moreover, for any $h,k\in L_d^2$
\[
D^2T_{i,m}(q)[h,k]
=
\left\langle
Dg_{i,m}(q)[h],k
\right\rangle_{L_d^2}.
\]
\end{proof}

\begin{theorem}\label{thm:Hilbert-uniqueness}
Let $d\in\{2,3\}$ and $p=2$.  For every $q_0\in L_d^2$, there exists $\eta>0$ such that, for $|\delta|<\eta$, problem \eqref{eq:RINP} with $T_*=T_0+\delta$ has a unique minimizer $q_\delta$.  The map $\delta\mapsto q_\delta$ is $C^1$ near $0$ and
\begin{equation}\label{eq:Hilbert-expansion}
 q_\delta=q_0+\delta\frac{g_0}{\|g_0\|_{L_d^2}^2}
 +O_{L_d^2}(\delta^2).
\end{equation}
\end{theorem}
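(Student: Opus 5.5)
The plan is to obtain Theorem~\ref{thm:Hilbert-uniqueness} as the scalar case $M=1$ of part (iii) of Proposition~\ref{prop:finite-observation-principle}, with $X=L_d^2$ and $\mathcal O(q)=T_{i,m}(q)$. The hypotheses are checked in the following order.

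First, Theorem~\ref{thm:node-calculus} gives weak sequential continuity of $T_{i,m}$ on all of $L_d^2$. This is admissible under \eqref{eq:main-p-assumption}, since $p=2>d/2$ for $d\in\{2,3\}$. Second, Proposition~\ref{prop:Hilbert-C2} supplies the $C^2$ regularity of $T_{i,m}$ near $q_0$. Third, $G_0h=\langle g_0,h\rangle_{L_d^2}$ is surjective onto $\R$ because $g_0\neq0$, again by Theorem~\ref{thm:node-calculus}.

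With these in place, part (iii) yields the conclusions directly. The Gram matrix is the scalar $\mathbb M_0=G_0G_0^*=\|g_0\|_{L_d^2}^2>0$, and $G_0^*\mu=\mu g_0$. Formula \eqref{eq:abstract-Hilbert-right-inverse} then becomes
\[
 \mathscr R_0(\delta)=\delta\,\frac{g_0}{\|g_0\|_{L_d^2}^2}.
\]
Part (iii) also asserts that, for all sufficiently small $|\delta|$, the minimizer of \eqref{eq:abstract-min-problem} with $\boldsymbol y=\delta$ is unique and depends $C^1$ on $\delta$. This minimizer is exactly problem \eqref{eq:RINP} with $T_*=T_0+\delta$. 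Finally, \eqref{eq:abstract-Hilbert-expansion} is precisely \eqref{eq:Hilbert-expansion}, which completes the argument.

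The substantive step, which the paper has already carried out, is the one hidden in the abstract principle: every global minimizer must lie in the neighborhood where the implicit function theorem applies to the KKT system. I would make this explicit as follows. By \eqref{eq:abstract-distance-asymptotic}, each minimizer satisfies $\|q_\delta-q_0\|_{L_d^2}=O(|\delta|)$. Because $g_{i,m}(\cdot;q)$ stays uniformly away from $0$ near $q_0$ (continuity of $g$ in Proposition~\ref{prop:Hilbert-C2}), the Lagrange multiplier in \eqref{eq:Lagrange-kernel} is
\[
 \kappa=\frac{\langle q_\delta-q_0,g\rangle}{\|g\|^2}=O(|\delta|).
\]
For $p=2$ the map $\varphi_2$ is the identity. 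Hence $(q_\delta-q_0,\kappa)$ is a small solution of the system
\[
 h-\kappa\, g_{i,m}(\cdot;q_0+h)=0,\qquad T_{i,m}(q_0+h)=T_0+\delta .
\]
This system is $C^1$ in $(h,\kappa,\delta)$ by Proposition~\ref{prop:Hilbert-C2}. Its linearization at the origin, $(v,\mu)\mapsto(v-\mu g_0,\langle g_0,v\rangle)$, is invertible. The IFT therefore gives a unique small solution branch, and that branch is $C^1$ in $\delta$.

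The trivial case $\delta=0$ is covered by Remark~\ref{rem:trivial}. The $O(\delta^2)$ remainder in \eqref{eq:Hilbert-expansion} follows from the $C^1$ dependence of the branch combined with the $C^2$ regularity of $T_{i,m}$: Taylor-expand the constraint to second order and substitute $h=\kappa g_{i,m}(\cdot;q_0+h)$.
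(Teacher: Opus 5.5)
Your proposal is correct and follows the paper's proof: it is the case $M=1$ of part (iii) of Proposition~\ref{prop:finite-observation-principle}, with $C^2$ regularity from Proposition~\ref{prop:Hilbert-C2} and surjectivity from $g_0\neq0$, giving $\mathbb M_0=\|g_0\|_{L_d^2}^2$ and the stated right inverse. Your explicit KKT/implicit-function discussion only unpacks the localization argument that the paper carries out inside the proof of that proposition.
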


\begin{proof}
Apply Proposition~\ref{prop:finite-observation-principle} with
$\mathcal O(q)=T_{i,m}(q)$ and $p=2$.  Proposition~\ref{prop:Hilbert-C2}
provides the required $C^2$ regularity, and
$G_0h=\langle g_0,h\rangle_{L_d^2}$ is onto.  Hence
$\mathbb M_0=\|g_0\|_{L_d^2}^2$ and
\[
 G_0^*\mathbb M_0^{-1}\delta
 =\delta\frac{g_0}{\|g_0\|_{L_d^2}^2}.
\]
The uniqueness, $C^1$ dependence, and the quadratic remainder in
\eqref{eq:Hilbert-expansion} follow directly from
\eqref{eq:abstract-Hilbert-expansion}.
\end{proof}

\subsection{Reconstruction through three characteristic shooting parameters}
\label{sec:shooting}

Assume in this subsection that $q_0\in C([0,R])$.  For the left-hand problem it is convenient to write the regular solution as $U=r^\ell W$.

\begin{lemma}\label{lem:regular-shooting}
Fix $\varepsilon\in\{-1,1\}$, $\lambda\in\mathbb R$, and $a\in\mathbb R$.
There exist $\rho>0$ and a unique solution $W$ on $[0,\rho]$ of
\begin{equation}\label{eq:regular-W-equation}
 -W''-\frac{d+2\ell-1}{r}W'+q_0W
 +\varepsilon r^{2\ell(p'-1)}\varphi_{2p'}(W)=\lambda W,
 \qquad W(0)=a,\quad W'(0)=0.
\end{equation}
It is equivalently characterized by
\begin{equation}\label{eq:regular-shooting-integral}
\begin{aligned}
 W(r)=a+\int_0^r s^{1-d-2\ell}\int_0^s\tau^{d+2\ell-1}
 \big[&(q_0(\tau)-\lambda)W(\tau)\\
 &+\varepsilon\tau^{2\ell(p'-1)}
 \varphi_{2p'}(W(\tau))\big]\dd\tau\dd s.
\end{aligned}
\end{equation}
The solution extends uniquely to a maximal interval and depends locally
$C^1$ on $(a,\lambda)$ while bounded.  The associated profile
$U(r)=r^\ell W(r)$ is the unique Friedrichs solution satisfying
\[
 \lim_{r\downarrow0}r^{-\ell}U(r)=a.
\]
\end{lemma}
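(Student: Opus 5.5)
We treat \eqref{eq:regular-shooting-integral} as a fixed-point problem for a Volterra operator, in the same way as in Lemma~\ref{lem:sector-volterra-endpoint}. Put $n:=d+2\ell$ and $\theta:=2\ell(p'-1)\ge0$, and define
\[
 (\mathcal T W)(r):=a+\int_0^r s^{1-n}\int_0^s\tau^{n-1}\big[(q_0-\lambda)W+\varepsilon\tau^{\theta}\varphi_{2p'}(W)\big]\dd\tau\dd s
\]
on the closed ball $\{W\in C([0,\rho]):\|W-a\|_\infty\le1\}$. Since $q_0$ is continuous and $2p'-1>1$, the map $w\mapsto\varphi_{2p'}(w)$ is $C^1$ with derivative $(2p'-1)|w|^{2p'-2}$, so it is locally Lipschitz. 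The double integral of a bounded function $G$ satisfies
\[
 \Big|\int_0^r s^{1-n}\int_0^s\tau^{n-1}G\,\dd\tau\dd s\Big|\le \frac{r^2}{2n}\|G\|_\infty .
\]
Hence $\mathcal T$ maps the ball to itself and is a contraction once $\rho^2$ is small relative to $\|q_0-\lambda\|_\infty+(|a|+1)^{2p'-2}$.

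The contraction gives a unique continuous fixed point. We then differentiate the inner form
\[
 W'(r)=r^{1-n}\int_0^r\tau^{n-1}G\,\dd\tau=O(r).
\]
This gives $W\in C^1$ with $W'(0)=0$, and $(r^{n-1}W')'=r^{n-1}G$, which is \eqref{eq:regular-W-equation}. Since $G$ is continuous, $W'(r)/r\to G(0)/n$, so $W$ is in fact $C^2$ at $0$. For the converse, a $C^2$ solution on $(0,\rho]$ with $W(0)=a$ and $W'(0)=0$ has $r^{n-1}W'\to0$, and integrating twice returns the integral equation. This proves the equivalence and the uniqueness.

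On $(0,\infty)$ the equation is a regular ODE with locally Lipschitz right-hand side, so standard continuation gives a unique maximal extension. For $C^1$ dependence on $(a,\lambda)$ near the origin, we apply the implicit function theorem to $W-\mathcal T_{a,\lambda}W=0$ in $C([0,\rho])$. The map is $C^1$ in $(W,a,\lambda)$ because $\varphi_{2p'}\in C^1$. Its $W$-derivative is $I-K$, where $K$ is the linearized Volterra operator, and $\|K\|<1$ for small $\rho$, so $I-K$ is invertible. Away from $0$, classical parameter dependence for regular ODEs propagates the $C^1$ dependence along the interval as long as the solution stays bounded. The main obstacle is making $\rho$ uniform for $(a,\lambda)$ in a neighborhood, which the explicit contraction constant above handles.

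Finally, for the Friedrichs identification we set $U=r^\ell W$. A direct computation turns \eqref{eq:regular-W-equation} into the left equation of \eqref{eq:critical-sector-system}, using $r^\theta\varphi_{2p'}(W)=r^{-\ell}\varphi_{2p'}(U)$, since $|r^\ell W|^{2p'-2}=r^{\theta}|W|^{2p'-2}$. Because $W$ is bounded and $C^1$, the profile $U$ has finite form energy near $0$. For uniqueness, let $\widetilde U$ be another Friedrichs solution with the same limit $a$. Apply Lemma~\ref{lem:sector-volterra-endpoint} with $f=q_0-\lambda$ and $F=-\varepsilon r^{\theta}\varphi_{2p'}(\widetilde W)$. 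Here $\widetilde W$ is bounded by that lemma, first applied with $f=q_0-\lambda+\varepsilon|\widetilde U|^{2p'-2}$; since $\widetilde U\in L^\infty_{\rm loc}$ away from $0$ and lies in $L_d^{2p'}$ near $0$, this $f$ is in $L_d^p$. The lemma then shows that $\widetilde W$ satisfies \eqref{eq:regular-shooting-integral}, and the uniqueness of the fixed point gives $\widetilde W=W$.
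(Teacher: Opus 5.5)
Your proposal is correct and follows the same route as the paper: a Volterra contraction on a short interval based on the explicit kernel bound $r^2/(2(d+2\ell))$, then standard continuation and parameter dependence, and finally the substitution $U=r^\ell W$. Your version is more complete than the paper's sketch. In particular, the paper only asserts the Friedrichs uniqueness; you prove it by absorbing $\varepsilon|\widetilde U|^{2p'-2}$ into the potential, which is admissible since $\widetilde U\in L_d^{2p'}$, so that Lemma~\ref{lem:sector-volterra-endpoint} forces any competitor into the same integral equation.
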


\begin{proof}
On a bounded set of $W$-values, the nonlinear integrand is uniformly bounded
and locally Lipschitz.  The Volterra operator in
\eqref{eq:regular-shooting-integral} satisfies
\[
 \int_0^rs^{1-d-2\ell}\int_0^s\tau^{d+2\ell-1}\dd\tau\dd s
 =\frac{r^2}{2(d+2\ell)}.
\]
It is therefore a contraction on a sufficiently small interval.  Standard
Volterra continuation and parameter-dependence arguments give the remaining
claims.  Substitution $U=r^\ell W$ recovers the left equation in
\eqref{eq:critical-sector-system} and the Friedrichs asymptotics.
\end{proof}

For $a\neq0$, let $W_-(\cdot;a,\lambda)$ be the solution of
\eqref{eq:regular-W-equation} and set
\[
 U_-(r;a,\lambda):=r^\ell W_-(r;a,\lambda).
\]
For $b\neq0$, let $U_+(\cdot;b,\lambda)$ be the maximal solution obtained
backward from $R$:
\begin{equation}\label{eq:right-shooting}
 \begin{cases}
 -U_+''-\dfrac{d-1}{r}U_+'+\dfrac{\gamma}{r^2}U_+
 +q_0(r)U_+-\varepsilon\varphi_{2p'}(U_+)=\lambda U_+,\\
 U_+(R)=0,\qquad U_+'(R)=b.
 \end{cases}
\end{equation}
Let $Z_i^-(a,\lambda)$ be the $i$th positive zero of $U_-$ and let
$Z_j^+(b,\lambda)$ be the $j$th zero encountered from $R$ toward $0$,
excluding $R$.  Define
\[
 M_-(a,\lambda;T):=\int_0^TU_-(r;a,\lambda)^2\dd\mu_d,
 \qquad
 M_+(b,\lambda;T):=\int_T^RU_+(r;b,\lambda)^2\dd\mu_d.
\]

\begin{remark}
For an unnormalized eigenfunction $E$, put
$M=\int_0^RE^2\dd\mu_d$, $A=\int_0^TE^2\dd\mu_d$, and
$B=\int_T^RE^2\dd\mu_d$.  Then
\begin{equation}\label{eq:unnormalized-kernel}
 DT_{i,m}(q)[h]=\int_0^R\left[
 \frac{B}{M\Gamma}E^2\ind_{(0,T)}
 -\frac{A}{M\Gamma}E^2\ind_{(T,R)}\right]h\dd\mu_d,
 \qquad \Gamma=T^{d-1}E'(T)^2.
\end{equation}
\end{remark}

\begin{theorem}\label{thm:shooting}
Assume $q_0\in C([0,R])$.  For the sign $\varepsilon$ associated with a
nontrivial minimizer by Theorem~\ref{thm:critical-system}, every minimizer
determines, up to the independent sign symmetries of the two pieces, a triple
$(a,b,\lambda)$ satisfying
\begin{equation}\label{eq:shooting-system}
 \boxed{\begin{aligned}
 Z_i^-(a,\lambda)&=T_*,\\
 Z_{m-i}^+(b,\lambda)&=T_*,\\
 M_-(a,\lambda;T_*)&=M_+(b,\lambda;T_*).
 \end{aligned}}
\end{equation}
Here $a=\lim_{r\downarrow0}r^{-\ell}U(r)$ is the regular leading
coefficient.  Conversely, suppose that a triple solves
\eqref{eq:shooting-system}, all relevant zeros are simple, and the terminal
derivatives at $T_*$ are chosen with the same sign.  Define
\begin{equation}\label{eq:shooting-potential}
 q(r)=\begin{cases}
 q_0(r)+\varepsilon|U_-(r)|^{2p'-2},&0<r<T_*,\\
 q_0(r)-\varepsilon|U_+(r)|^{2p'-2},&T_*<r<R.
 \end{cases}
\end{equation}
Then $q$ is a constrained critical point of \eqref{eq:RINP}, and its $m$th
fixed-sector eigenfunction has $T_*$ as its $i$th node.
\end{theorem}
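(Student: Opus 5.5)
The argument has two directions. The forward direction reads the shooting data off the profile $U$ of Theorem~\ref{thm:critical-system}. The converse reverses that construction and checks criticality directly through the Lagrange multiplier rule.

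\emph{Forward direction.} Let $\widehat q\ne q_0$ be a minimizer and let $U$ be the piecewise scaled eigenfunction \eqref{eq:scaled-U}, with sign $\varepsilon$ and $\lambda=\lambda_m(\widehat q)$.

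On $(0,T_*)$, $U$ solves the left equation of \eqref{eq:critical-sector-system} and has the Friedrichs asymptotics \eqref{eq:critical-sector-endpoint} with leading coefficient $a\ne0$. Since $q_0$ is continuous, Lemma~\ref{lem:sector-volterra-endpoint} applies to $W=r^{-\ell}U$ with $f=q_0-\lambda+\varepsilon|U|^{2p'-2}$ and $F=0$. It shows that $W$ is the bounded zero-flux Volterra solution. By the uniqueness in Lemma~\ref{lem:regular-shooting}, $U=U_-(\cdot;a,\lambda)$ on $(0,T_*)$; here we replace $E$ by $-E$ if needed, which is the first sign symmetry.

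On $(T_*,R)$, $U$ solves the right equation of \eqref{eq:critical-sector-system}. It satisfies $U(R)=0$ and $U'(R)=b\ne0$, the latter because $E$ has a simple zero at the regular endpoint $R$. Standard ODE uniqueness on compact subintervals of $(0,R]$, where the nonlinearity is locally Lipschitz since $2p'-1>1$, gives $U=U_+(\cdot;b,\lambda)$ there.

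The zero counts follow from Theorem~\ref{thm:critical-system}. $U$ has $i-1$ zeros in $(0,T_*)$, so $T_*$ is the $i$th positive zero of $U_-$. It has $m-i-1$ zeros in $(T_*,R)$, so $T_*$ is the $(m-i)$th zero of $U_+$ counted from $R$. These give the first two equations of \eqref{eq:shooting-system}. The third equation is exactly the mass balance \eqref{eq:mass-balance-main}. Independently flipping the sign of either piece preserves all three equations, which accounts for the stated symmetry.

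\emph{Converse.} Suppose the triple solves \eqref{eq:shooting-system}, and define $q$ by \eqref{eq:shooting-potential}. Then $q\in L_d^p$, since each $U_\pm$ is bounded on the closure of its interval. Both $U_-$ and $U_+$ solve the same linear equation $H_{q,\ell}U=\lambda U$ on their respective intervals.

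Let $\alpha=U_-'(T_*)$ and $\beta=U_+'(T_*)$. These have the same sign and are nonzero by simplicity of the zeros. Set
\[
E:=\begin{cases}
U_-/\alpha,&0<r<T_*,\\
U_+/\beta,&T_*<r<R.
\end{cases}
\]
Then $E$ is continuous with $E(T_*)=0$, and $E'$ is continuous across $T_*$ with value $1$. Hence $E$ is a global weak Friedrichs solution, by the same gluing argument as in Proposition~\ref{prop:global-feasibility}. $E$ has $(i-1)+1+(m-i-1)=m-1$ interior zeros, so Proposition~\ref{prop:sector-spectral-oscillation} identifies it as the $m$th eigenfunction, with $T_*$ as its $i$th node.

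For criticality, write $A=\int_0^{T_*}E^2\dd\mu_d=M_-/\alpha^2$ and $B=\int_{T_*}^RE^2\dd\mu_d=M_+/\beta^2$, with $M:=M_-=M_+$. Let $\Gamma=T_*^{d-1}$ be the nodal flux quantity for this unnormalized $E$. The unnormalized kernel \eqref{eq:unnormalized-kernel} gives
\[
g=\frac{1}{(A+B)\,\Gamma}\bigl(B E^2\ind_{(0,T_*)}-A E^2\ind_{(T_*,R)}\bigr).
\]
On $(0,T_*)$, $BE^2=(M/\beta^2)(U_-^2/\alpha^2)$. On $(T_*,R)$, $AE^2=(M/\alpha^2)(U_+^2/\beta^2)$. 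So both pieces carry the common factor $M/(\alpha^2\beta^2)$, and hence $g=c\,(U_-^2\ind_{(0,T_*)}-U_+^2\ind_{(T_*,R)})$ for some $c>0$.

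Applying $\varphi_p$ to \eqref{eq:shooting-potential} gives $\varphi_p(q-q_0)=\varepsilon(U_-^2\ind_{(0,T_*)}-U_+^2\ind_{(T_*,R)})$, because $\varphi_p(|t|^{2p'-2})=t^2$. Thus $\varphi_p(q-q_0)=(\varepsilon/c)\,g$, which is \eqref{eq:Lagrange-kernel} with $\kappa=\varepsilon/c$. This is precisely the constrained criticality condition \eqref{eq:Lagrange-abstract}.

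The main obstacle is this step: recognizing that mass balance, not equality of slopes, is what makes the two scale factors compatible with a single multiplier. Everything else is bookkeeping with the uniqueness results of Lemmas~\ref{lem:sector-volterra-endpoint} and~\ref{lem:regular-shooting}.
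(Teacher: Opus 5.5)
Your proposal is correct and follows essentially the same route as the paper. The forward direction reads $(a,b,\lambda)$ off the scaled profile $U$ using Theorem~\ref{thm:critical-system}, the shooting uniqueness of Lemma~\ref{lem:regular-shooting}, the nodal count, and the mass balance. The converse glues $U_\pm$ after dividing by their slopes at $T_*$, and then uses $M_-=M_+$ so that the unnormalized kernel \eqref{eq:unnormalized-kernel} yields a single multiplier. The only cosmetic difference is normalization: you divide by the signed slopes, so $E'(T_*)=1$ and $\Gamma=T_*^{d-1}$, whereas the paper divides by $|U_\pm'(T_*)|$ and writes $|\kappa|$ explicitly.
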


\begin{proof}
The forward implication follows from Theorem~\ref{thm:critical-system},
Lemma~\ref{lem:regular-shooting}, the nodal count, and the mass balance
\eqref{eq:mass-balance-main}.  For the converse, put $T=T_*$,
$\alpha_-=|U_-'(T)|$, and $\alpha_+=|U_+'(T)|$, and define
\[
 E(r)=\begin{cases}
 U_-(r)/\alpha_-,&0<r<T,\\
 U_+(r)/\alpha_+,&T<r<R.
 \end{cases}
\]
Then $E$ and its derivative match at $T$, so $E$ is a global Friedrichs weak
solution of
\[
 -E''-\frac{d-1}{r}E'+\frac{\gamma}{r^2}E+qE=\lambda E.
\]
The zero counts identify it as an $m$th fixed-sector eigenfunction.  If
$I=M_-=M_+$, then
$A=I/\alpha_-^2$, $B=I/\alpha_+^2$, and hence
$\alpha_-^2/\alpha_+^2=B/A$.
Put
\[
 M:=\int_0^R E^2\dd\mu_d=A+B.
\]
With $\Gamma=T^{d-1}E'(T)^2$, choose $\kappa$ by
\[
 |\kappa|=\frac{\alpha_-^2M\Gamma}{B}
 =\frac{\alpha_+^2M\Gamma}{A},
 \qquad \sgn\kappa=\varepsilon.
\]
Formula \eqref{eq:shooting-potential} then gives
$\varphi_p(q-q_0)=\kappa g_{i,m}(\cdot;q)$ through
\eqref{eq:unnormalized-kernel}.
Moreover, the two shooting profiles are continuous and bounded on their
respective compact radial intervals, and
\[
 |q-q_0|^p=|U_-|^{2p'}\ind_{(0,T)}
             +|U_+|^{2p'}\ind_{(T,R)}.
\]
Hence $q-q_0\in L_d^p$.
Thus the constrained Euler--Lagrange equation holds.
\end{proof}

\begin{remark}[Numerical reconstruction]\label{rem:numerics}
System \eqref{eq:shooting-system} can be treated by Newton or continuation
methods using the leading coefficient $a$, the terminal derivative $b$, and
the eigenvalue $\lambda$.  Solving it finds constrained critical points;
a second-variation or comparison argument is still required to identify
global minimizers.
\end{remark}

\subsection{Global uniqueness in the inward
second-radial-mode configuration}
\label{sec:global-uniqueness-special}

Only in this subsection do we specialize the fixed sector to the radial case
$\ell=0$.  We assume
\begin{equation}\label{eq:global-uniqueness-assumptions}
 d\ge2,\qquad q_0\equiv c,\qquad m=2,\qquad i=1,\qquad
 p>\frac{d+2}{2},
\end{equation}
and put
\begin{equation}\label{eq:global-sigma}
 \sigma:=2p',\qquad 2<\sigma<2+\frac4d.
\end{equation}
Thus $\sigma$ is Sobolev subcritical and, more specifically, $L^2$-mass
subcritical.
We record the proof mechanism before entering the details.  The equivalence
\[
 p>\frac{d+2}{2}
 \quad\Longleftrightarrow\quad
 \sigma=2p'<2+\frac4d
\]
is the only extra exponent restriction used in the global argument.  First,
domain monotonicity selects the critical sign and confines the shifted
eigenvalue $\mu$ to
$(\Lambda_R(T),\Lambda_L(T))$.  Second, the optimizer is represented by the
unique positive focusing branch on $B_T$ and the unique positive logistic
branch on $A_{T,R}$.  Third, the logistic mass is strictly increasing, whereas
the focusing mass is strictly decreasing.  The mass-subcritical inequality
enters precisely in the focusing calculation: after differentiating the
Pohozaev identity, it gives
\[
 \frac d2-\frac2{\sigma-2}<0.
\]
This strict sign rules out a zero of $M_L'(\mu)$; the vanishing of $M_L$ at
$\Lambda_L(T)$ then fixes $M_L'(\mu)<0$.  Finally, the endpoint limits and
opposite monotonicities give exactly one mass-balance parameter, which yields
the unique global minimizer.
Let
\begin{equation}\label{eq:global-reference-node}
 T_0:=T_{1,2}^{(0)}(c)
 =R\frac{j_{(d-2)/2,1}}{j_{(d-2)/2,2}}.
\end{equation}
For $T\in(0,R)$, let $\Lambda_L(T)$ be the first Dirichlet eigenvalue of
$-\Delta$ on the ball $B_T$, and let $\Lambda_R(T)$ be the first Dirichlet
eigenvalue on the annulus
\[
 A_{T,R}:=\{x\in\mathbb R^d:T<|x|<R\}.
\]
The restrictions of the second radial Dirichlet eigenfunction on $B_R$ to
$B_{T_0}$ and $A_{T_0,R}$ are positive first eigenfunctions.  Consequently,
\begin{equation}\label{eq:crossing-first-eigenvalues}
 \Lambda_L(T_0)=\Lambda_R(T_0)
 =\frac{j_{(d-2)/2,2}^2}{R^2}.
\end{equation}
Strict domain monotonicity gives, for $0<T<T_0$,
\begin{equation}\label{eq:strict-domain-order}
 \Lambda_R(T)<\Lambda_R(T_0)=\Lambda_L(T_0)<\Lambda_L(T).
\end{equation}

\begin{lemma}
\label{lem:global-sign-selection}
Assume \eqref{eq:global-uniqueness-assumptions} and let $0<T<T_0$.  Every
minimizer satisfying $T_{1,2}^{(0)}(\widehat q)=T$ has
\begin{equation}\label{eq:inward-critical-sign}
 \varepsilon=-1
\end{equation}
in Theorem~\ref{thm:critical-system}.  If
$\mu:=\lambda_{0,2}(\widehat q)-c$, then
\begin{equation}\label{eq:mu-gap}
 \Lambda_R(T)<\mu<\Lambda_L(T).
\end{equation}
\end{lemma}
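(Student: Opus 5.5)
Since $T<T_0=T^{(0)}_{1,2}(c)$, the reference potential is not admissible, so every minimizer satisfies $\widehat q\ne q_0$ (cf.\ Remark~\ref{rem:trivial}). Theorem~\ref{thm:critical-system} therefore applies with some $\kappa\ne0$ and $\varepsilon=\sgn\kappa\in\{-1,1\}$. With $\ell=0$, $m=2$, $i=1$, formula \eqref{eq:q-reconstruction-U} gives
\[
 \widehat q-c=\varepsilon|U|^{\sigma-2}\ \text{on }(0,T),\qquad
 \widehat q-c=-\varepsilon|U|^{\sigma-2}\ \text{on }(T,R).
\]
Here $U$ is a nonzero multiple of $E=E_2(\cdot;\widehat q)$ on each side. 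By the nodal count in Theorem~\ref{thm:critical-system}, $E$ has no zeros in $(0,T)$ or in $(T,R)$. Hence $\widehat q-c$ is a.e.\ strictly of one sign on each piece: the sign of $\varepsilon$ on the ball and the opposite sign on the annulus.

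The key observation is that, with $\mu=\lambda_{0,2}(\widehat q)-c$, the restriction $E|_{B_T}$ (as a radial function) is a positive Dirichlet eigenfunction of $-\Delta+(\widehat q-c)$ on $B_T$ with eigenvalue $\mu$. A positive eigenfunction must be the first one, so $\mu$ is the first Dirichlet eigenvalue of $-\Delta+(\widehat q-c)$ on $B_T$. Likewise, $\mu$ is the first Dirichlet eigenvalue of the same operator on $A_{T,R}$. The boundary condition $E(T)=0$ and the Friedrichs/Dirichlet data from Proposition~\ref{prop:sector-spectral-oscillation} put $E|_{B_T}\in H^1_0(B_T)$ and $E|_{A_{T,R}}\in H^1_0(A_{T,R})$. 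Now compare with the free eigenvalues using Rayleigh quotients, testing in both directions.
\begin{itemize}
\item If the perturbation $w:=\widehat q-c$ is $\ge0$ and nonzero on a domain $\Omega$, then
\[
 \mu=\frac{\int_\Omega|\nabla E|^2+\int_\Omega wE^2}{\int_\Omega E^2}\ge\Lambda_\Omega+\frac{\int_\Omega wE^2}{\int_\Omega E^2}>\Lambda_\Omega .
\]
\item If $w\le0$ and nonzero on $\Omega$, test with the positive first free eigenfunction $\phi_\Omega$:
\[
 \mu\le\Lambda_\Omega+\frac{\int_\Omega w\phi_\Omega^2}{\int_\Omega\phi_\Omega^2}<\Lambda_\Omega .
\]
\end{itemize}
The potential lies in $L^p$ with $p>d/2$, so all these integrals are finite by \eqref{eq:form-bound}.

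Sign selection follows by contradiction. Suppose $\varepsilon=+1$. Then $w>0$ a.e.\ on $B_T$, so $\mu>\Lambda_L(T)$. Also $w<0$ a.e.\ on $A_{T,R}$, so $\mu<\Lambda_R(T)$. Together these give $\Lambda_L(T)<\Lambda_R(T)$, contradicting the strict domain order \eqref{eq:strict-domain-order} for $T<T_0$. Hence $\varepsilon=-1$, which is \eqref{eq:inward-critical-sign}. With $\varepsilon=-1$, the same two comparisons run in the other direction: $w<0$ on $B_T$ gives $\mu<\Lambda_L(T)$, and $w>0$ on $A_{T,R}$ gives $\mu>\Lambda_R(T)$. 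This is \eqref{eq:mu-gap}.

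The only delicate point is the justification of the variational characterization on subdomains. Specifically, I must verify that the radial restrictions lie in the correct $H^1_0$ spaces, with the origin handled by the Friedrichs condition $r^{-\ell}U\to a$ and $U(T)=0$. I must also check that strictness holds. For the first bullet this uses $E\ne0$ a.e.; for the second it uses $\phi_\Omega>0$ in $\Omega$. I expect this to be routine given Lemma~\ref{lem:sector-volterra-endpoint} and the sector isometry \eqref{eq:sector-isometry}.
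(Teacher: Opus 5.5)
Your proposal is correct and follows essentially the same route as the paper: both treat the sign-definite pieces of the second radial eigenfunction as positive first Dirichlet eigenfunctions of $-\Delta+(\widehat q-c)$ on $B_T$ and on $A_{T,R}$, rule out $\varepsilon=1$ by strict monotonicity of the first eigenvalue in the potential against the strict domain order $\Lambda_R(T)<\Lambda_L(T)$, and reverse the inequalities to get $\Lambda_R(T)<\mu<\Lambda_L(T)$. The only difference is that you prove the strict monotonicity explicitly with the two Rayleigh-quotient tests (using $E$ itself in one direction and $\phi_\Omega$ in the other), where the paper simply invokes it.
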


\begin{proof}
The restriction of the second radial eigenfunction to either side of its
unique node has a fixed sign and, after an independent sign change, is a
positive first Dirichlet eigenfunction of the corresponding restricted
Schr\"odinger operator.  In particular, the two pieces of $U$ have no
interior zero and are nonzero almost everywhere on their domains.  Suppose
that $\varepsilon=1$.  Formula \eqref{eq:q-reconstruction-U} gives
\[
 \widehat q>c\quad\hbox{a.e. on }B_T,
 \qquad
 \widehat q<c\quad\hbox{a.e. on }A_{T,R}.
\]
The inequalities are strict on sets of positive measure.  Strict
monotonicity of the first eigenvalue with respect to the potential
would then imply
\[
 \lambda_{0,2}(\widehat q)>c+\Lambda_L(T),
 \qquad
 \lambda_{0,2}(\widehat q)<c+\Lambda_R(T),
\]
contradicting \eqref{eq:strict-domain-order}.  Hence $\varepsilon=-1$.
Reversing the potential inequalities on the two domains gives
\[
 \lambda_{0,2}(\widehat q)<c+\Lambda_L(T),
 \qquad
 \lambda_{0,2}(\widehat q)>c+\Lambda_R(T),
\]
which is \eqref{eq:mu-gap}.
\end{proof}

We next prove the strict mass monotonicity that replaces the periodic
quadrature available in the one-dimensional problem.  All solutions below
are positive and radial.

The sign selection above is global: it applies to every global minimizer
satisfying the nodal constraint, rather than only to critical points close to
the reference potential.

\begin{lemma}
\label{lem:opposite-mass-monotonicity}
Fix $T\in(0,T_0)$ and let $\sigma$ satisfy \eqref{eq:global-sigma}.  For every \(\mu\in(\Lambda_R(T),\Lambda_L(T))\)
there are unique positive solutions
\begin{align}
 -\Delta u_\mu&=\mu u_\mu+u_\mu^{\sigma-1}
 &&\text{in }B_T, &u_\mu&=0&&\text{on }\partial B_T,
 \label{eq:focusing-ball-branch}\\
 -\Delta v_\mu+v_\mu^{\sigma-1}&=\mu v_\mu
 &&\text{in }A_{T,R}, &v_\mu&=0&&\text{on }\partial A_{T,R}.
 \label{eq:logistic-annulus-branch}
\end{align}
For every fixed $r_*>d$, the two branches are $C^1$ with values in
\(W_{\rm rad}^{2,r_*}\cap W_0^{1,r_*}\)
on their parameter intervals, and hence their boundary slopes depend $C^1$
on $\mu$.  Their weighted radial masses
\begin{equation}\label{eq:one-domain-masses}
 M_L(\mu):=\int_0^T u_\mu(r)^2\,\dd\mu_d(r),
 \qquad
 M_R(\mu):=\int_T^R v_\mu(r)^2\,\dd\mu_d(r)
\end{equation}
satisfy
\begin{equation}\label{eq:opposite-mass-derivatives}
 M_L'(\mu)<0,\qquad M_R'(\mu)>0.
\end{equation}
Moreover,
\begin{equation}\label{eq:mass-endpoint-limits}
 \lim_{\mu\uparrow\Lambda_L(T)}M_L(\mu)=0,
 \qquad
 \lim_{\mu\downarrow\Lambda_R(T)}M_R(\mu)=0.
\end{equation}
The focusing solution at $\mu=\Lambda_R(T)$ and the logistic solution at
$\mu=\Lambda_L(T)$ are well defined and positive; in particular,
\begin{equation}\label{eq:opposite-endpoint-positive-masses}
 M_L(\Lambda_R(T))>0,
 \qquad
 M_R(\Lambda_L(T))>0.
\end{equation}
\end{lemma}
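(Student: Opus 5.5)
The plan treats the two branches separately, since their structure is quite different. For the \emph{logistic annulus branch} \eqref{eq:logistic-annulus-branch}, existence for $\mu>\Lambda_R(T)$ follows by minimizing the coercive functional $\frac12\int|\nabla v|^2-\frac\mu2\int v^2+\frac1\sigma\int|v|^\sigma$ on $H_0^1(A_{T,R})$. The minimum is negative because $\mu$ exceeds the first eigenvalue, and the minimizer can be taken nonnegative, hence positive by the strong maximum principle. Uniqueness comes from the Brezis--Oswald argument, since $t\mapsto t^{\sigma-2}$ is strictly increasing, and radiality follows from uniqueness and rotation invariance.

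For the mass derivative I would use the linearized operator $\mathcal L_R:=-\Delta+(\sigma-1)v_\mu^{\sigma-2}-\mu$. Since $v_\mu>0$ is the positive first eigenfunction of $-\Delta+v_\mu^{\sigma-2}$ with eigenvalue $\mu$, and $(\sigma-1)v_\mu^{\sigma-2}>v_\mu^{\sigma-2}$, the first eigenvalue of $\mathcal L_R$ is strictly positive. Hence $\mathcal L_R$ is invertible $W^{2,r_*}\cap W_0^{1,r_*}\to L^{r_*}$, and the implicit function theorem gives the $C^1$ branch together with $C^1$ boundary slopes (via $W^{2,r_*}\hookrightarrow C^1$ for $r_*>d$). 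The derivative $w=\partial_\mu v_\mu$ solves $\mathcal L_Rw=v_\mu>0$, so the maximum principle for a positive operator gives $w>0$ and $M_R'(\mu)=2\int v_\mu w\,\dd\mu_d>0$. For the endpoint limit, test the equation against the first annulus eigenfunction $\phi_R>0$, which gives $\int v_\mu^{\sigma-1}\phi_R=(\mu-\Lambda_R(T))\int v_\mu\phi_R$. Combined with the uniform $L^\infty$ bound $v_\mu\le\mu^{1/(\sigma-2)}$ from the maximum principle, this forces $v_\mu\to0$ as $\mu\downarrow\Lambda_R(T)$. Since $\Lambda_L(T)>\Lambda_R(T)$, the solution at $\mu=\Lambda_L(T)$ lies inside the existence range, which gives $M_R(\Lambda_L(T))>0$.

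For the \emph{focusing ball branch} \eqref{eq:focusing-ball-branch} with $\mu<\Lambda_L(T)$ and $\sigma$ subcritical, existence comes from the mountain pass theorem. Radial symmetry of every positive solution follows from Gidas--Ni--Nirenberg. Uniqueness and radial nondegeneracy I would take from the known uniqueness theory for $-\Delta u=\mu u+u^{\sigma-1}$ on balls (Kwong, Korman, Adimurthi--Yadava), which covers the whole subcritical range, including negative $\mu$ and hence $\mu=\Lambda_R(T)$. Nondegeneracy gives invertibility of $\mathcal L_L:=-\Delta-\mu-(\sigma-1)u_\mu^{\sigma-2}$ on radial functions, and so the $C^1$ branch. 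For the limit $M_L\to0$ as $\mu\uparrow\Lambda_L(T)$, I would combine Gidas--Spruck blow-up a priori bounds, uniform for $\mu$ in compact sets, with the same test-function identity, now $\int u_\mu^{\sigma-1}\phi_L=(\Lambda_L(T)-\mu)\int u_\mu\phi_L$. This shows that any limit is a nonnegative solution at $\Lambda_L(T)$ satisfying $\int u^{\sigma-1}\phi_L=0$, hence zero. This is the Crandall--Rabinowitz bifurcation from $(\Lambda_L(T),0)$, identified with our branch by uniqueness.

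The main obstacle is the strict sign $M_L'(\mu)<0$. My plan is to show $M_L'$ never vanishes and then conclude the sign. Positivity of $M_L$ together with $M_L\to0$ at $\Lambda_L(T)$ excludes $M_L'>0$ everywhere, so nonvanishing plus continuity gives $M_L'<0$ throughout. To exclude a zero, set $w=\partial_\mu u_\mu$, so $\mathcal L_Lw=u_\mu$, and use the scaling generator $S:=x\cdot\nabla u_\mu+\frac{2}{\sigma-2}u_\mu$, which satisfies $\mathcal L_LS=-2\mu u_\mu$. Green's formula on $B_T$, with $w=0$ and $S=T\partial_\nu u_\mu$ on $\partial B_T$, gives
\[
\Bigl(\frac{2}{\sigma-2}-\frac d2\Bigr)M_L(\mu)+2\mu\int_0^T u_\mu w\,\dd\mu_d
=\pm T\int_{\partial B_T}\partial_\nu u_\mu\,\partial_\nu w .
\]
I would then differentiate the Pohozaev identity in $\mu$ to express the boundary term $\int_{\partial B_T}\partial_\nu u_\mu\,\partial_\nu w=\frac12\frac{\dd}{\dd\mu}\int_{\partial B_T}|\partial_\nu u_\mu|^2$ through $M_L$ and $\int u_\mu w$. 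At a hypothetical zero $\int u_\mu w=0$, the identity would then reduce to $\bigl(\frac d2-\frac2{\sigma-2}\bigr)M_L=0$. This is impossible because mass subcriticality $\sigma<2+\frac4d$ makes the coefficient strictly negative while $M_L>0$. Getting the bookkeeping of this boundary term right is the step I expect to need the most care.
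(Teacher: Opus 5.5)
Most of your proposal is sound. This covers the logistic annulus branch, existence, uniqueness, symmetry and regularity of the focusing branch, both endpoint limits, and the final step ``$M_L'\neq0$ plus $M_L\to0$ at $\Lambda_L(T)$ forces $M_L'<0$''. There you use different but legitimate tools: mountain pass and the classical ball-uniqueness literature instead of Nehari minimization and Noris--Tavares--Verzini, and blow-up bounds instead of the Nehari energy estimate.

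The gap is in the step you flagged, and it is not bookkeeping: the mechanism itself cannot work. First, the sign is $\mathcal L_LS=+2\mu u_\mu$, because $\mathcal L_L(x\cdot\nabla u_\mu)=2\mu u_\mu+2u_\mu^{\sigma-1}$ and $\mathcal L_Lu_\mu=-(\sigma-2)u_\mu^{\sigma-1}$. With this sign and $I=\int_0^Tu_\mu w\,\dd\mu_d$, Green's formula gives, in radial form,
\[
 T^du_\mu'(T)w'(T)=\Bigl(\frac d2-\frac2{\sigma-2}\Bigr)M_L(\mu)+2\mu I .
\]
Now differentiate the Pohozaev identity in $\mu$, using $\int_0^Tu_\mu'w'\,\dd\mu_d=\mu I+J$ and $J:=\int_0^Tu_\mu^{\sigma-1}w\,\dd\mu_d=-M_L(\mu)/(\sigma-2)$ (pair $\mathcal L_Lw=u_\mu$ with $u_\mu$). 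This produces \emph{the same} relation, because the dilation identity is just the linearized Pohozaev identity.

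Substituting one into the other therefore gives $0=0$, and the boundary term is never eliminated. At a hypothetical zero $I=0$ you only learn $T^du_\mu'(T)w'(T)=(\frac d2-\frac2{\sigma-2})M_L<0$, i.e.\ $w'(T)>0$, which is no contradiction. (With your sign $-2\mu u_\mu$, the two identities would instead force $\mu I=0$ for all $\mu$, which is false.) All available integral relations among $I$, $J$, $M_L$ and the boundary term collapse to this single one, so identities alone cannot exclude $I=0$.

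The missing ingredient is qualitative, and it is what the paper uses. The solution $u_\mu$ has Morse index one: it is a Nehari or mountain-pass critical point, and $Q_\mu(u_\mu)=-(\sigma-2)\int u_\mu^\sigma<0$. Mass subcriticality is used only to get $w<0$ near $T$ from $w'(T)>0$.

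Suppose $w$ had two disjoint negative radial regions $D_1,D_2$. The truncations $w\mathbf 1_{D_j}\in H^1_{0,\rm rad}(B_T)$ satisfy $Q_\mu(w\mathbf 1_{D_j})=\int_{D_j}u_\mu w\,\dd x<0$. They span a two-dimensional negative subspace, contradicting Morse index one.

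Since $I=0$ and $u_\mu>0$ force $w$ to be positive somewhere, $w\ge0$ on $(0,r_0)$ and $w<0$ on $(r_0,T)$. The strict radial decrease of $u_\mu$ then gives
\[
 J=\int_0^T\bigl(u_\mu^{\sigma-2}-u_\mu(r_0)^{\sigma-2}\bigr)u_\mu w\,\dd\mu_d>0,
\]
contradicting $J=-M_L(\mu)/(\sigma-2)<0$. Without this nodal/Morse-index step, or some other genuinely non-identity input, your argument for $M_L'(\mu)<0$ does not go through.
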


\begin{proof}
We begin with \eqref{eq:focusing-ball-branch}.  Since
$0<\mu<\Lambda_L(T)$ and $\sigma$ is Sobolev subcritical, the functional
\[
 \mathcal E_\mu(w)
 :=\frac12\int_{B_T}(|\nabla w|^2-\mu w^2)\,\dd x
   -\frac1\sigma\int_{B_T}|w|^\sigma\,\dd x
\]
attains its minimum on the Nehari manifold
\[
 \mathcal N_\mu
 :=\left\{w\in H_0^1(B_T)\setminus\{0\}:
 \int_{B_T}(|\nabla w|^2-\mu w^2)\,\dd x
 =\int_{B_T}|w|^\sigma\,\dd x\right\}.
\]
After replacing the minimizer by its absolute value, the strong maximum
principle gives a positive solution.  The moving-plane theorem
\cite{GidasNiNirenberg1979} makes every positive solution radial and strictly
decreasing in the radial variable.  To verify the precise range of the ball
uniqueness theorem, define
\[
 w(y):=T^{2/(\sigma-2)}u_\mu(Ty),\qquad y\in B_1.
\]
Then
\[
 -\Delta w=T^2\mu\,w+w^{\sigma-1}\quad\hbox{in }B_1,
 \qquad w|_{\partial B_1}=0,
\]
and
\[
 0<T^2\mu<T^2\Lambda_L(T)=\lambda_1(B_1),
 \qquad
 1<\sigma-1<1+\frac4d.
\]

Since $2<\sigma<2+4/d$, the exponent $\sigma-1$ is Sobolev subcritical
for every $d\ge2$; in dimension $d=2$ this uses
$H_0^1(B_1)\hookrightarrow L^q(B_1)$ for every finite $q$.  Writing
$\lambda=-T^2\mu\in(-\lambda_1(B_1),0)$, the rescaled equation becomes
\[
 -\Delta w+\lambda w=w^{\sigma-1}\quad\hbox{in }B_1,
 \qquad w|_{\partial B_1}=0.
\]
In the notation of
\cite[Theorem~5.1]{NorisTavaresVerzini2014}, the linear parameter is
$\lambda_{\mathrm{NTV}}=-T^2\mu\in(-\lambda_1(B_1),0)$, the nonlinear
coefficient is positive, and the power is
$p_{\mathrm{NTV}}=\sigma-1\in(1,1+4/d)$.  Thus all of its hypotheses are
satisfied.  The cited theorem gives both uniqueness of the positive solution
and triviality of the kernel of the full Dirichlet linearized operator, not
merely of its radial restriction.
Hence there is a unique positive solution $u_\mu$ for every $d\ge2$ in the
present exponent range.
Since the exponent is Sobolev subcritical, standard elliptic bootstrap gives
\[
 u_\mu\in L^\infty(B_T)\cap W^{2,r}(B_T)
 \quad\hbox{for every finite }r>1.
\]
We record the spectral information needed for parameter differentiation.
Define the Nehari constraint
\[
 \mathcal G_\mu(w)
 :=\int_{B_T}(|\nabla w|^2-\mu w^2)\,\dd x
   -\int_{B_T}|w|^\sigma\,\dd x.
\]
At $u_\mu$,
\[
 \mathcal G_\mu'(u_\mu)[u_\mu]
 =(2-\sigma)\int_{B_T}u_\mu^\sigma\,\dd x\ne0,
\]
so the tangent space to $\mathcal N_\mu$ has codimension one.  The second
variation is
\[
 Q_\mu(\phi)
 :=\int_{B_T}\left(|\nabla\phi|^2-\mu\phi^2
 -(\sigma-1)u_\mu^{\sigma-2}\phi^2\right)\dd x.
\]
The constrained minimality gives $Q_\mu\ge0$ on the tangent space, while
\[
 Q_\mu(u_\mu)=-(\sigma-2)\int_{B_T}u_\mu^\sigma\,\dd x<0.
\]
Hence the Morse index of $u_\mu$ is exactly one.  The full Dirichlet
nondegeneracy just recorded
shows that the linearized operator has trivial kernel, and
therefore its restriction to the radial subspace has trivial kernel as well.
For every fixed $r_*>d$, the Dirichlet
linearized operator is Fredholm of index zero from
$W_{\rm rad}^{2,r_*}(B_T)\cap W_0^{1,r_*}(B_T)$ to
$L_{\rm rad}^{r_*}(B_T)$; the trivial kernel therefore makes it an
isomorphism.  Applying the implicit function theorem to the $C^1$ map
\[
 (\mu,w)\longmapsto-\Delta w-\mu w-|w|^{\sigma-2}w
\]
and using uniqueness patches the local branches into a $C^1$ branch on
$(0,\Lambda_L(T))$.  Since
$W^{2,r_*}\hookrightarrow C^{1,\alpha}$ for some $\alpha>0$, differentiating
the Pohozaev identity, including its boundary term, is justified.

Set $u=u_\mu$ and $z=\partial_\mu u_\mu$.  Then
\begin{equation}\label{eq:focusing-linearized-equation}
 L_\mu z=u,
 \qquad
 L_\mu:=-\Delta-\mu-(\sigma-1)u^{\sigma-2}.
\end{equation}
Use radial integrals and write
\[
 I:=\int_0^Tuz\,\dd\mu_d,
 \qquad
 J:=\int_0^Tu^{\sigma-1}z\,\dd\mu_d.
\]
Since $L_\mu u=-(\sigma-2)u^{\sigma-1}$, radial self-adjointness gives
\begin{equation}\label{eq:J-identity}
 J=-\frac1{\sigma-2}\int_0^Tu^2\,\dd\mu_d<0.
\end{equation}
We claim that $I\ne0$.  Suppose instead that $I=0$.  Dividing the standard
Pohozaev identity by $|\mathbb S^{d-1}|$ gives
\begin{equation}\label{eq:focusing-pohozaev}
 \frac{d-2}{2}\int_0^T u'(r)^2\,\dd\mu_d
 +\frac{T^d}{2}u'(T)^2
 =\frac{d\mu}{2}\int_0^T u^2\,\dd\mu_d
 +\frac d\sigma\int_0^T u^\sigma\,\dd\mu_d.
\end{equation}
Differentiating \eqref{eq:focusing-pohozaev}, using the equation tested
against $z$, and then using $I=0$ and \eqref{eq:J-identity}, yields
\begin{equation}\label{eq:pohozaev-boundary-sign}
 T^d u'(T)z'(T)
 =\left(\frac d2-\frac2{\sigma-2}\right)
  \int_0^T u^2\,\dd\mu_d<0.
\end{equation}
The strict inequality follows precisely from
$\sigma<2+4/d$, or equivalently $p>(d+2)/2$.  It is used here to exclude the
possibility $M_L'(\mu)=2I=0$; the endpoint behavior below then determines the
strict negative sign of $M_L'$. Since
$u'(T)<0$, we have $z'(T)>0$, and therefore $z<0$ immediately to the left of
$T$.

The negative set of $z$ has at most one radial component.  Indeed, suppose
that $D_1,D_2$ are two disjoint negative radial nodal regions.  Define
\[
 \phi_j:=-z\mathbf1_{D_j},\qquad j=1,2.
\]
If a component has an endpoint at the origin, radial regularity of $z$ gives
the required finite-energy behavior there; at every interior endpoint the
trace of $z$ is zero.  Thus the zero extensions satisfy
$\phi_j\in H_{0,\rm rad}^1(B_T)$.  On $D_j$ we have
$L_\mu\phi_j=-u$, and therefore
\[
 Q_\mu(\phi_j)
 =-\int_{D_j}u\phi_j\,\dd x<0,
 \qquad j=1,2.
\]
Because the supports are disjoint, the cross term vanishes and
\[
 Q_\mu(a\phi_1+b\phi_2)
 =a^2Q_\mu(\phi_1)+b^2Q_\mu(\phi_2)<0
\]
for every $(a,b)\ne(0,0)$.  Hence $Q_\mu$ is negative definite on the
two-dimensional space $\operatorname{span}\{\phi_1,\phi_2\}$, contradicting
Morse index one.  Moreover, any interior zero $r_*$ of $z$ with
$z'(r_*)=0$ satisfies $z''(r_*)=-u(r_*)<0$ by
\eqref{eq:focusing-linearized-equation}; thus the zeros relevant to the nodal
decomposition are isolated.  Since $I=0$, $u>0$, and $z<0$ near $T$, the
function $z$ is positive somewhere.  Consequently, the unique negative
component is $(r_0,T)$ for some $r_0\in(0,T)$, and, apart from isolated
zeros,
\[
 z\ge0\quad\hbox{on }(0,r_0),
 \qquad
 z<0\quad\hbox{on }(r_0,T).
\]
The positive solution $u$ is strictly decreasing.  Using $I=0$, we obtain
\[
 J=\int_0^T
 \bigl(u^{\sigma-2}-u(r_0)^{\sigma-2}\bigr)uz\,\dd\mu_d>0,
\]
contradicting \eqref{eq:J-identity}.  Hence $I\ne0$.  Since \(M_L'(\mu)=2I\),
the derivative never vanishes.  It is continuous and the parameter interval
is connected, so its sign is constant.

To determine the sign and prove the first endpoint limit, let $\phi_1$ be a
positive first eigenfunction on $B_T$.  Choosing $t_\mu>0$ so that
$t_\mu\phi_1\in\mathcal N_\mu$ gives
\[
 t_\mu^{\sigma-2}
 =(\Lambda_L(T)-\mu)
 \frac{\int_{B_T}\phi_1^2\,\dd x}{\int_{B_T}\phi_1^\sigma\,\dd x}.
\]
Since $u_\mu$ is the Nehari minimizer,
\[
 \int_{B_T}u_\mu^\sigma\,\dd x
 \le C(\Lambda_L(T)-\mu)^{\sigma/(\sigma-2)}.
\]
The Nehari identity and the Poincar\'e inequality then imply
\[
 \int_{B_T}|\nabla u_\mu|^2\,\dd x
 \le C(\Lambda_L(T)-\mu)^{2/(\sigma-2)}\longrightarrow0.
\]
Thus $M_L(\mu)\to0$ as $\mu\uparrow\Lambda_L(T)$.  If $M_L'$ were positive,
then for fixed $\mu<\mu_2<\Lambda_L(T)$ one would have
$0<M_L(\mu)<M_L(\mu_2)$; letting $\mu_2\uparrow\Lambda_L(T)$ gives a
contradiction.  Hence $M_L'<0$.

We turn to \eqref{eq:logistic-annulus-branch}.  The functional
\[
 \mathcal J_\mu(w)
 :=\frac12\int_{A_{T,R}}|\nabla w|^2\,\dd x
  +\frac1\sigma\int_{A_{T,R}}|w|^\sigma\,\dd x
  -\frac\mu2\int_{A_{T,R}}w^2\,\dd x
\]
is coercive and has negative values on small multiples of the first annular
eigenfunction because $\mu>\Lambda_R(T)$.  Hence it has a nonzero positive
minimizer.  Uniqueness follows from the Brezis--Oswald principle
\cite{BrezisOswald1986}, since
\[
 \frac{\mu s-s^{\sigma-1}}s=\mu-s^{\sigma-2}
\]
is strictly decreasing for $s>0$.  Rotational invariance and uniqueness make
this solution radial.  The maximum principle and subcritical elliptic
bootstrap give
\[
 v_\mu\in L^\infty(A_{T,R})\cap W^{2,r}(A_{T,R})
 \quad\hbox{for every finite }r>1.
\]

Let
\[
 \mathcal L_\mu:=-\Delta+(\sigma-1)v_\mu^{\sigma-2}-\mu.
\]
Because
$\mathcal L_\mu v_\mu=(\sigma-2)v_\mu^{\sigma-1}>0$, pairing this identity
with the positive principal eigenfunction of $\mathcal L_\mu$ shows that its
principal eigenvalue is positive.  Hence the kernel is trivial.  The
Dirichlet operator is Fredholm of index zero from
$W^{2,r_*}\cap W_0^{1,r_*}$ to $L^{r_*}$, and is therefore an isomorphism.  Applying the implicit function
theorem to $(\mu,w)\mapsto-\Delta w+|w|^{\sigma-2}w-\mu w$ gives a
$C^1$ branch in $W^{2,r_*}$.  If
$w=\partial_\mu v_\mu$, then
$\mathcal L_\mu w=v_\mu>0$; the maximum principle yields $w>0$, and hence
$M_R'(\mu)>0$.

At a maximum point of $v_\mu$, the equation gives
$\|v_\mu\|_\infty^{\sigma-2}\le\mu$.  Hence the branch is uniformly bounded
as $\mu\downarrow\Lambda_R(T)$.  Let
$\mu_n\downarrow\Lambda_R(T)$.  The equation and the uniform $L^\infty$
bound give uniform $W^{2,r}$ bounds for every finite $r$; after passing to a
subsequence,
\[
 v_{\mu_n}\longrightarrow v_*
 \quad\hbox{in }C^1(\overline{A_{T,R}})
\]
for some nonnegative Dirichlet solution at $\mu=\Lambda_R(T)$.  Testing the
equation against the positive first annular eigenfunction $\phi_R$ gives
\[
 (\mu_n-\Lambda_R(T))\int_{A_{T,R}}v_{\mu_n}\phi_R\,\dd x
 =\int_{A_{T,R}}v_{\mu_n}^{\sigma-1}\phi_R\,\dd x.
\]
Passing to the limit yields
\[
 \int_{A_{T,R}}v_*^{\sigma-1}\phi_R\,\dd x=0.
\]
Since $v_*\ge0$ and $\phi_R>0$, we have $v_*\equiv0$.  Thus every sequence
approaching $\Lambda_R(T)$ has a subsequence converging to zero, and therefore
\[
 M_R(\mu)\longrightarrow0
 \quad\hbox{as }\mu\downarrow\Lambda_R(T).
\]
This proves the second endpoint limit.

Finally, $\Lambda_R(T)$ lies strictly below $\Lambda_L(T)$, so the focusing
existence, uniqueness, regularity, and nondegeneracy arguments also apply at
$\mu=\Lambda_R(T)$.  The implicit function theorem therefore extends the
focusing branch continuously to this endpoint.  Similarly, at
$\mu=\Lambda_L(T)$ the logistic solution exists uniquely and its linearized
operator has positive principal eigenvalue, so the logistic branch extends
continuously there.  The endpoint solutions are positive.  Consequently the
two mass functions extend continuously to the opposite endpoints and satisfy
\eqref{eq:opposite-endpoint-positive-masses}.
\end{proof}

\begin{theorem}\label{thm:global-uniqueness-inward}
Assume \eqref{eq:global-uniqueness-assumptions}.  Then, for every
\begin{equation}\label{eq:inward-node-range}
 0<T_*<T_0,
\end{equation}
problem \eqref{eq:RINP}, interpreted in the radial sector $\ell=0$ with
$m=2$ and $i=1$, has a unique global minimizer.  More precisely, put
$T=T_*$ and define
\begin{equation}\label{eq:mass-difference-function}
 \Phi_T(\mu):=M_L(\mu)-M_R(\mu),
 \qquad \Lambda_R(T)<\mu<\Lambda_L(T).
\end{equation}
There is a unique $\mu_T$ satisfying $\Phi_T(\mu_T)=0$, and the unique
minimizer is represented, for almost every $r\in(0,R)$, by
\begin{equation}\label{eq:global-unique-potential}
 \widehat q_T(r)=
 \begin{cases}
  c-u_{\mu_T}(r)^{\sigma-2},&0<r<T,\\
  c+v_{\mu_T}(r)^{\sigma-2},&T<r<R.
 \end{cases}
\end{equation}
Here $u_{\mu_T}$ and $v_{\mu_T}$ are the positive radial solutions of
\eqref{eq:focusing-ball-branch} and \eqref{eq:logistic-annulus-branch}.
Moreover, $\widehat q_T-c\in L_d^p$ because
\begin{equation}\label{eq:global-potential-integrability}
 (\sigma-2)p=\sigma
\end{equation}
and $u_{\mu_T},v_{\mu_T}\in L^\sigma$ on their respective domains.
\end{theorem}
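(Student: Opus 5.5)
The plan has three parts: show that $\Phi_T$ has exactly one zero, show that every minimizer is built from the two branches at a zero of $\Phi_T$, and then read off uniqueness and integrability.

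\emph{Zero of $\Phi_T$.} By Lemma~\ref{lem:opposite-mass-monotonicity}, $\Phi_T$ is $C^1$ on $(\Lambda_R(T),\Lambda_L(T))$ and satisfies
\[
\Phi_T'=M_L'-M_R'<0,
\]
so it is strictly decreasing. From \eqref{eq:mass-endpoint-limits} and \eqref{eq:opposite-endpoint-positive-masses}, together with the continuous extensions of the branches to the opposite endpoints,
\[
\lim_{\mu\downarrow\Lambda_R(T)}\Phi_T(\mu)=M_L(\Lambda_R(T))>0,
\qquad
\lim_{\mu\uparrow\Lambda_L(T)}\Phi_T(\mu)=-M_R(\Lambda_L(T))<0.
\]
The intermediate value theorem and strict monotonicity then give exactly one zero $\mu_T$.

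\emph{Every minimizer is built from the branches.} Theorem~\ref{thm:existence} gives at least one minimizer $\widehat q$. Since $T<T_0=T_{1,2}^{(0)}(c)$, the reference potential is infeasible, so $\widehat q\ne c$ and Theorem~\ref{thm:critical-system} applies with $\ell=0$, $m=2$, $i=1$. Lemma~\ref{lem:global-sign-selection} forces $\varepsilon=-1$ and places $\mu:=\lambda_{0,2}(\widehat q)-c$ in the gap \eqref{eq:mu-gap}.

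After independent sign flips, $U$ is positive on each side of $T$, and \eqref{eq:critical-sector-system} with $\gamma=0$ and $\sigma=2p'$ reads as follows. On $B_T$, $-\Delta U=\mu U+U^{\sigma-1}$, and $U$ vanishes on $\partial B_T$. On $A_{T,R}$, $-\Delta U+U^{\sigma-1}=\mu U$, and $U$ vanishes at $r=T$ and $r=R$.

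Next I need $U$ to be a genuine $H^1_0$ solution of each PDE as a radial function. On the ball this uses the Friedrichs asymptotics \eqref{eq:critical-sector-endpoint}, which show that the radial extension is a weak solution across the origin (the removable point, since $U$ is bounded with $rU'\to0$). On the annulus no singularity arises. Uniqueness of positive solutions in Lemma~\ref{lem:opposite-mass-monotonicity} then identifies the two pieces as $U=u_\mu$ on $(0,T)$ and $U=v_\mu$ on $(T,R)$.

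The mass balance \eqref{eq:mass-balance-main} becomes $M_L(\mu)=M_R(\mu)$, that is, $\Phi_T(\mu)=0$, so $\mu=\mu_T$. Formula \eqref{eq:q-reconstruction-U} with $\varepsilon=-1$ and $2p'-2=\sigma-2$ then gives \eqref{eq:global-unique-potential}. Because $\widehat q$ is determined by $\mu_T$ alone, and the sign ambiguity of $U$ disappears in $|U|^{\sigma-2}$, any two minimizers coincide.

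\emph{Integrability.} From $\sigma=2p'$ we get $(\sigma-2)p=\sigma$, so $|\widehat q-c|^p=u^\sigma$ or $v^\sigma$ on the respective pieces. These are integrable because both branches lie in $L^\infty$.

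\emph{Main obstacle.} The delicate step is the identification of $U$ with $u_\mu,v_\mu$, specifically checking that the radial profile on $(0,T)$ solves the PDE on the whole ball $B_T$, origin included. This requires the removability argument at $0$ via \eqref{eq:critical-sector-endpoint}. A second check is that the minimizer's $\mu$ lies strictly inside the open interval, which \eqref{eq:mu-gap} guarantees. Everything else is assembly of earlier results.
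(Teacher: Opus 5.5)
Your proposal is correct and follows the paper's proof essentially step for step. Strict monotonicity and the endpoint limits from Lemma~\ref{lem:opposite-mass-monotonicity} give the unique zero $\mu_T$; then sign selection, the critical system, uniqueness of the positive one-domain branches, and the mass balance force every minimizer to equal \eqref{eq:global-unique-potential}. Two of your steps go slightly further than the paper but change nothing: you check explicitly that the radial profile is an $H_0^1(B_T)$ solution across the origin, which the paper leaves implicit, and you get integrability from $L^\infty$ bounds rather than $L^\sigma$ bounds.
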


\begin{proof}
By Lemma~\ref{lem:opposite-mass-monotonicity}, $\Phi_T$ is continuous and
strictly decreasing.  Equations \eqref{eq:mass-endpoint-limits} and
\eqref{eq:opposite-endpoint-positive-masses} give
\[
 \lim_{\mu\downarrow\Lambda_R(T)}\Phi_T(\mu)
 =M_L(\Lambda_R(T))>0,
\]
whereas
\[
 \lim_{\mu\uparrow\Lambda_L(T)}\Phi_T(\mu)
 =-M_R(\Lambda_L(T))<0.
\]
Thus $\Phi_T$ has exactly one zero $\mu_T$.

Existence of a global minimizer follows from Theorem~\ref{thm:existence}.
Let $\widehat q$ be any minimizer.  Since $T<T_0$, it is nontrivial.
Lemma~\ref{lem:global-sign-selection} gives $\varepsilon=-1$ and
\eqref{eq:mu-gap}.  Since $m=2$ and $i=1$, the two pieces of the scaled
function in Theorem~\ref{thm:critical-system} have no interior zeros.  After
independent sign changes, their absolute values are the positive solutions
of \eqref{eq:focusing-ball-branch} and
\eqref{eq:logistic-annulus-branch}, with
$\mu=\lambda_{0,2}(\widehat q)-c$.  The weighted balance law
\eqref{eq:mass-balance-main} is exactly
\[
 M_L(\mu)=M_R(\mu).
\]
Uniqueness of the two one-domain positive solutions and uniqueness of the
zero of $\Phi_T$ imply $\mu=\mu_T$ and force
\eqref{eq:global-unique-potential} almost everywhere.  Thus every global
minimizer represents the same element of $L_d^p$.
\end{proof}

\begin{remark}\label{rem:outward-global-open}
The theorem is deliberately one-sided. If $T_*>T_0$, the focusing
equation occurs on an annulus rather than on a ball.  The radial ball
uniqueness and Morse-index argument above no longer provide the required
global annular mass monotonicity, and the present method does not exclude
multiple positive annular branches.  The outward problem
therefore remains open.  After this subsection, the fixed-sector notation
again refers to an arbitrary $\ell\ge0$.
\end{remark}

\section{Local reconstruction from several nodes of one fixed mode}\label{sec:multi-node}

In this section we prescribe finitely many nodes of the same fixed-sector eigenfunction.  Since all corresponding gradients contain the common factor $E_m^2$, their linear independence can be proved directly.  We fix
\begin{equation}\label{eq:multi-index-set}
 m\ge2,\qquad
 \mathcal I=\{i_1<i_2<\cdots<i_N\}\subset\{1,\ldots,m-1\}.
\end{equation}
Define the vector-valued nodal map

\[
 \mathbf T(q)=\mathbf T_{\mathcal I,m}(q)
 :=\big(T_{i_1,m}(q),\ldots,T_{i_N,m}(q)\big)^{\mathsf T}
 \in\R^N.
\]

For a compatible target vector
\[
 \boldsymbol\tau=(\tau_1,\ldots,\tau_N)^{\mathsf T},
 \qquad 0<\tau_1<\cdots<\tau_N<R,
\]
we consider
\begin{equation}\label{eq:multi-node-problem}
 \min\left\{\|q-q_0\|_{L_d^p}:
 \mathbf T(q)=\boldsymbol\tau,\ q\in L_d^p\right\}.
\end{equation}
By the finite-node realizability result established above,
a target vector is compatible precisely when its prescribed
radii respect the ordering of the selected nodal indices.

\subsection{Linear independence and submersion}

Fix $q\in L_d^p$ and let $E=E_m(\cdot;q)$ be normalized by \eqref{eq:normalization}.  For $1\le\alpha\le N$, put
\begin{equation}\label{eq:multi-node-quantities}
 T_\alpha:=T_{i_\alpha,m}(q),\quad
 A_\alpha:=\int_0^{T_\alpha}E^2\dd\mu_d,\quad
 B_\alpha:=\int_{T_\alpha}^R E^2\dd\mu_d,\quad
 \Gamma_\alpha:=T_\alpha^{d-1}E'(T_\alpha)^2,
\end{equation}
and denote the corresponding gradient by

\[
 g_\alpha(r;q)
 =E(r)^2H_\alpha(r;q),
 \qquad
 H_\alpha(r;q)
 =\frac{B_\alpha}{\Gamma_\alpha}\ind_{(0,T_\alpha)}(r)
 -\frac{A_\alpha}{\Gamma_\alpha}\ind_{(T_\alpha,R)}(r).
\]

\begin{theorem}\label{thm:multi-gradient-independence}
Under \eqref{eq:main-p-assumption}, the family
\(\{g_1(\cdot;q),\ldots,g_N(\cdot;q)\}\)
is linearly independent in $L_d^{p'}$ for every $q\in L_d^p$.
\end{theorem}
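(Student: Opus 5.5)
Suppose $\sum_{\alpha=1}^N c_\alpha g_\alpha(\cdot;q)=0$ in $L_d^{p'}$, that is, $E(r)^2\sum_\alpha c_\alpha H_\alpha(r;q)=0$ for $\mu_d$-a.e.\ $r\in(0,R)$. The plan is to cancel the common factor $E^2$ and reduce to a linear-algebra statement about step functions. By Proposition~\ref{prop:sector-spectral-oscillation}, $E$ is continuous on $(0,R]$ and has only the finitely many simple zeros $T_{1,m}<\dots<T_{m-1,m}$ in $(0,R)$. Hence $E^2>0$ off a null set, and therefore
\[
 \Sigma(r):=\sum_{\alpha=1}^N c_\alpha H_\alpha(r;q)=0\qquad\text{for a.e. } r\in(0,R).
\]

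Each $H_\alpha$ is a two-step function with a single jump at $T_\alpha$: it equals $B_\alpha/\Gamma_\alpha$ to the left of $T_\alpha$ and $-A_\alpha/\Gamma_\alpha$ to the right. The jump is
\[
 H_\alpha(T_\alpha^+)-H_\alpha(T_\alpha^-)=-\frac{A_\alpha+B_\alpha}{\Gamma_\alpha}=-\frac1{\Gamma_\alpha},
\]
which uses the normalization \eqref{eq:normalization}. The nodes $T_1<\dots<T_N$ are distinct because the indices $i_\alpha$ are distinct. So $\Sigma$ is piecewise constant on the partition $0<T_1<\dots<T_N<R$. Across $T_\beta$ the only term that jumps is $c_\beta H_\beta$, so the jump of $\Sigma$ at $T_\beta$ equals $-c_\beta/\Gamma_\beta$. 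Since $\Sigma=0$ almost everywhere, both one-sided constant values on the intervals adjacent to $T_\beta$ vanish, and so does the jump. As $\Gamma_\beta>0$ (the node is simple), this gives $c_\beta=0$ for every $\beta$.

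For an alternative check, one can argue recursively instead of via jumps. On $(T_N,R)$ only the right-hand constants appear, which gives $\sum_\alpha c_\alpha A_\alpha/\Gamma_\alpha=0$. Passing to $(T_{N-1},T_N)$ then changes only the $N$th coefficient, giving $c_N(A_N+B_N)/\Gamma_N=0$, and one descends in the same way. There is no genuine obstacle here. The only points needing care are that $E^2$ vanishes only on a null set, so division is legitimate in $L_d^{p'}$ (the weight $r^{d-1}$ is positive on $(0,R)$), and that the intervals $(T_{\beta-1},T_\beta)$ have positive measure, so the almost-everywhere identity pins down each constant value.
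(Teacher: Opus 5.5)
Your proof is correct and follows the paper's argument essentially verbatim: divide by $E^2$, which is positive off the finitely many nodes, then read off $c_\beta=0$ from the jump $-c_\beta/\Gamma_\beta$ of the step function $\sum_\alpha c_\alpha H_\alpha$ at $T_\beta$. Your recursive descent over the intervals $(T_N,R)$, $(T_{N-1},T_N),\dots$ is just an equivalent reformulation of the same jump comparison.
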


\begin{proof}
Suppose that
\[
 \sum_{\alpha=1}^Nc_\alpha g_\alpha=0
 \qquad\text{a.e. on }(0,R).
\]
The eigenfunction $E$ has only finitely many zeros, hence $E^2>0$ almost everywhere.  Dividing by $E^2$ gives
\begin{equation}\label{eq:step-combination-zero}
 \sum_{\alpha=1}^Nc_\alpha H_\alpha=0
 \qquad\text{a.e. on }(0,R).
\end{equation}
The left-hand side is a step function whose possible jumps are the distinct points $T_1,\ldots,T_N$.  At $T_\beta$, only $H_\beta$ jumps, and its jump is
\[
 H_\beta(T_\beta^+)-H_\beta(T_\beta^-)
 =-\frac{A_\beta+B_\beta}{\Gamma_\beta}
 =-\frac1{\Gamma_\beta}.
\]
Since the step function in \eqref{eq:step-combination-zero} vanishes almost everywhere, its constant value on each complementary interval is zero, and therefore every jump is zero.  Thus
\( -\frac{c_\beta}{\Gamma_\beta}=0\).
As $\Gamma_\beta>0$, we have $c_\beta=0$ for every $\beta$.
\end{proof}

Let

\[
 G(q):=D\mathbf T(q):L_d^p\longrightarrow\R^N,
 \qquad
 G(q)h=\left(\int_0^Rg_\alpha(r;q)h(r)\dd\mu_d(r)\right)_{\alpha=1}^N.
\]

Its adjoint in the duality between $L_d^p$ and $L_d^{p'}$ is

\[
 G(q)^*\boldsymbol\lambda
 =\sum_{\alpha=1}^N\lambda_\alpha g_\alpha(\cdot;q),
 \qquad \boldsymbol\lambda\in\R^N.
\]

\begin{theorem}\label{thm:multi-submersion}
The map $\mathbf T:L_d^p\to\R^N$ is continuously Fr\'echet differentiable and $G(q)=D\mathbf T(q)$ is surjective at every $q\in L_d^p$.  Consequently, every joint level set
\[
 \{\widetilde q:\mathbf T(\widetilde q)=\mathbf T(q)\}
\]
is, locally near $q$, a $C^1$ Banach submanifold of codimension $N$.
\end{theorem}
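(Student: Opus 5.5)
The argument combines three results already in hand: the one-node calculus of Theorem~\ref{thm:node-calculus}, the linear independence of Theorem~\ref{thm:multi-gradient-independence}, and the Banach-space submersion theorem. It proceeds in three steps.

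\emph{Regularity of $\mathbf T$.} Each component $T_{i_\alpha,m}$ is continuously Fr\'echet differentiable on $L_d^p$ by Theorem~\ref{thm:node-calculus}. A map into $\R^N$ is $C^1$ exactly when each component is, so $\mathbf T$ is $C^1$. Its derivative is the operator $G(q)$ written above, with kernels $g_\alpha(\cdot;q)\in L_d^{p'}$ given by Lemma~\ref{lem:node-derivative}. Operator-norm continuity of $q\mapsto G(q)$ follows from the $L_d^{p'}$-convergence $g_\alpha(\cdot;q_n)\to g_\alpha(\cdot;q)$, which was shown in the proof of Theorem~\ref{thm:node-calculus}.

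\emph{Surjectivity of $G(q)$.} The range of $G(q)$ is a linear subspace of $\R^N$. If it were proper, there would be $\boldsymbol c\ne0$ with $\boldsymbol c\cdot G(q)h=0$ for all $h\in L_d^p$. This says $\int_0^R\bigl(\sum_\alpha c_\alpha g_\alpha\bigr)h\dd\mu_d=0$ for every $h\in L_d^p$. Since $L_d^{p'}$ is the dual of $L_d^p$, we get $\sum_\alpha c_\alpha g_\alpha=0$ in $L_d^{p'}$, that is, $G(q)^*\boldsymbol c=0$. This contradicts Theorem~\ref{thm:multi-gradient-independence}.

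For later use with an explicit right inverse, one can argue through the Gram matrix. Set $\mathbb G_{\alpha\beta}:=\langle g_\alpha,J_{p'}\text{-type test}\rangle$. More simply, take $h_\beta\in L_d^p$ with $\int g_\alpha h_\beta\dd\mu_d=\delta_{\alpha\beta}$. Such $h_\beta$ exist because linearly independent functionals on a Banach space admit a biorthogonal system. Then $L\boldsymbol x:=\sum_\beta x_\beta h_\beta$ is a bounded right inverse of $G(q)$.

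\emph{Submanifold structure.} A surjective map onto the finite-dimensional space $\R^N$ has finite-dimensional, hence complemented, range. Its kernel $\ker G(q)$ is closed with codimension $N$, and it is complemented by $\operatorname{span}\{h_1,\dots,h_N\}$. The local submersion theorem, equivalently the implicit function theorem applied to $(k,\boldsymbol x)\mapsto\mathbf T(q+k+L\boldsymbol x)$ on $\ker G(q)\times\R^N$, then shows that near $q$ the level set $\{\widetilde q:\mathbf T(\widetilde q)=\mathbf T(q)\}$ is the graph of a $C^1$ map $\ker G(q)\to\R^N$. It is therefore a $C^1$ Banach submanifold of codimension $N$.

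No step here is a real obstacle. The only point needing care is the duality step: passing from vanishing of $\boldsymbol c\cdot G(q)$ on all of $L_d^p$ to vanishing of $\sum c_\alpha g_\alpha$ almost everywhere. This is exactly where the identification of $(L_d^p)^*$ with $L_d^{p'}$ in the weighted measure $\dd\mu_d$ is used, valid because $1<p<\infty$.
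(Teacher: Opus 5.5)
Your proposal is correct and follows the paper's proof essentially verbatim. You get componentwise $C^1$ regularity from Theorem~\ref{thm:node-calculus}, prove surjectivity by contradiction (a nonzero annihilating vector gives $G(q)^*\boldsymbol c=0$ in $L_d^{p'}$, contradicting Theorem~\ref{thm:multi-gradient-independence}), and then apply the Banach submersion theorem. Your extra details (the biorthogonal right inverse and the complemented kernel) only make the submersion step explicit; the half-finished sentence defining $\mathbb G_{\alpha\beta}$ via a ``$J_{p'}$-type test'' is meaningless and should be deleted.
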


\begin{proof}
Componentwise continuous differentiability follows from Theorem~\ref{thm:node-calculus}.  If $G(q)$ were not onto, its range would be a proper subspace of $\R^N$, so there would exist $\boldsymbol\lambda\ne0$ such that
\[
 \boldsymbol\lambda\cdot G(q)h=0
 \qquad\text{for all }h\in L_d^p.
\]
Equivalently, $G(q)^*\boldsymbol\lambda=0$ in $L_d^{p'}$, contradicting Theorem~\ref{thm:multi-gradient-independence}.  The submersion theorem gives the manifold statement.
\end{proof}

\subsection{Global feasibility, existence, and the multi-node critical equation}

The gluing construction of Proposition~\ref{prop:global-feasibility} actually prescribes the complete nodal vector of one fixed-sector mode.

\begin{proposition}\label{prop:multi-global-feasibility}
Let \(0<\tau_1<\cdots<\tau_{m-1}<R\).
There exists a bounded radial piecewise constant potential $q_{\boldsymbol\tau}$ such that
\[
 T_{j,m}(q_{\boldsymbol\tau})=\tau_j,
 \qquad 1\le j\le m-1.
\]
Consequently, every compatible partial vector $(\tau_1,\ldots,\tau_N)$ associated with the indices in \eqref{eq:multi-index-set} is feasible.
\end{proposition}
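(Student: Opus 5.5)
The plan is to reuse the gluing construction from the proof of Proposition~\ref{prop:global-feasibility}, now with the interfaces chosen as $r_j:=\tau_j$ for $1\le j\le m-1$, together with $r_0=0$ and $r_m=R$.

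On $(0,\tau_1)$ take the positive first Friedrichs--Dirichlet eigenfunction $\phi_1$ of the free fixed-sector expression
\[
 -(r^{d-1}\psi')'+\gamma r^{d-3}\psi=\kappa^2 r^{d-1}\psi .
\]
This exists by Proposition~\ref{prop:sector-spectral-oscillation} with $q\equiv0$ on the ball of radius $\tau_1$. On each $(\tau_{j-1},\tau_j)$, $j\ge2$, take the positive first Dirichlet eigenfunction $\phi_j$, with eigenvalue $\kappa_j^2$; this is a regular problem, so Hopf-type sign information gives $\phi_j'(\tau_{j-1}^+)>0$ and $\phi_j'(\tau_j^-)<0$. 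Define the coefficients $c_j$ by \eqref{eq:gluing-coefficients}, so that their signs alternate. Set $\Psi:=c_j\phi_j$ on the $j$th interval. Fix any $\lambda_*$ and put $q_{\boldsymbol\tau}:=\lambda_*-\kappa_j^2$ on $(\tau_{j-1},\tau_j)$. This potential is bounded and piecewise constant, and it lies in $L_d^p$.

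Next I check that $\Psi$ is a global Friedrichs weak eigenfunction of $H_{q_{\boldsymbol\tau},\ell}$ with eigenvalue $\lambda_*$.
\begin{enumerate}
\item $\Psi$ is continuous and vanishes at every $\tau_j$ and at $R$.
\item The one-sided derivatives match at each $\tau_j$ by the choice of $c_{j+1}$, so the weighted flux $r^{d-1}\Psi'$ is continuous.
\item Near $0$, $\Psi=\phi_1$ lies in the Friedrichs branch.
\end{enumerate}
Hence $\Psi\in\mathcal H_{\ell,0}$. Integrating by parts on each subinterval against a test function $\varphi\in\mathcal H_{\ell,0}$ produces interface terms that cancel by flux continuity. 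The boundary term at $0$ vanishes because $r^{d-1}\phi_1'\varphi\to0$, which follows from Lemma~\ref{lem:sector-volterra-endpoint} applied to both functions. Therefore $\mathfrak h_{q_{\boldsymbol\tau}}[\Psi,\varphi]=\lambda_*\langle\Psi,\varphi\rangle_{L_d^2}$.

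Because $\phi_j>0$ on the interior of each interval and the signs of the $c_j$ alternate, $\Psi$ has exactly the $m-1$ interior zeros $\tau_1<\cdots<\tau_{m-1}$, and each is simple since $\phi_j'\ne0$ at the endpoints. Proposition~\ref{prop:sector-spectral-oscillation} states that the $k$th eigenfunction has exactly $k-1$ zeros and that eigenvalues are simple. So $\lambda_*$ must equal $\lambda_m(q_{\boldsymbol\tau})$ and $\Psi$ is a multiple of $E_m$. Listing its zeros in order gives $T_{j,m}(q_{\boldsymbol\tau})=\tau_j$.

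For the consequence, take a compatible partial vector $(\tau_1,\ldots,\tau_N)$ attached to the indices $i_1<\cdots<i_N$. I complete it to a full increasing vector $0<\tilde\tau_1<\cdots<\tilde\tau_{m-1}<R$ with $\tilde\tau_{i_\alpha}=\tau_\alpha$. This is possible because, between two consecutive prescribed radii, the required number of intermediate indices can always be interpolated strictly inside the open gap, and similarly near $0$ and near $R$. Applying the first part to $\tilde{\boldsymbol\tau}$ then gives the claim.

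The only step that needs care is the weak-solution verification at the singular endpoint. It is handled by the Friedrichs asymptotics of Lemma~\ref{lem:sector-volterra-endpoint}, exactly as in Lemma~\ref{lem:endpoint-regularity}.
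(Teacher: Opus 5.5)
Your proposal is correct and follows the paper's route: you glue the positive first Friedrichs/Dirichlet eigenfunctions on the partition $0<\tau_1<\cdots<\tau_{m-1}<R$ with alternating-sign coefficients, take the piecewise constant potential $\lambda_*-\kappa_j^2$, identify the glued function as the $m$th eigenfunction by Sturm oscillation, and complete a partial vector by inserting points in the open gaps. One small correction: Lemma~\ref{lem:sector-volterra-endpoint} applies only to solutions, so it cannot be applied to an arbitrary test function $\varphi$; to get $r^{d-1}\phi_1'\varphi\to0$, either take $\varphi$ in the dense class of smooth profiles $r^\ell w(r^2)$ and extend by density, or split $\varphi$ with a cutoff and use the weak equation of $\phi_1$ on the ball.
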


\begin{proof}
Apply the construction in the proof of Proposition~\ref{prop:global-feasibility} to the partition
\[
 0=r_0<r_1=\tau_1<\cdots<r_{m-1}=\tau_{m-1}<r_m=R.
\]
The resulting global eigenfunction has precisely these $m-1$ simple zeros.  For a partial nodal vector, insert $i_1-1$ points before $\tau_1$, insert $i_{\alpha+1}-i_\alpha-1$ points between $\tau_\alpha$ and $\tau_{\alpha+1}$, and insert $m-1-i_N$ points after $\tau_N$.  Every nonempty open interval contains the required finite number of points.  The completed vector has $m-1$ ordered entries and places each prescribed radius in its required nodal position, so the full-vector construction realizes the selected components.
\end{proof}

\begin{theorem}\label{thm:multi-existence}
Assume \eqref{eq:main-p-assumption}.  For every compatible target vector $\boldsymbol\tau$, problem \eqref{eq:multi-node-problem} has at least one minimizer.
\end{theorem}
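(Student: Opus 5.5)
The argument is the direct method, the same one used for Theorem~\ref{thm:existence}, with the scalar constraint replaced by the vector constraint $\mathbf T(q)=\boldsymbol\tau$. It has three steps.

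First, the admissible set
\[
 \mathcal S_{\boldsymbol\tau}:=\{q\in L_d^p:\mathbf T(q)=\boldsymbol\tau\}
\]
is nonempty. Compatibility of $\boldsymbol\tau$ means that its entries are strictly increasing in $(0,R)$ and are assigned to the increasing indices $i_1<\cdots<i_N$. Proposition~\ref{prop:multi-global-feasibility} therefore provides a bounded piecewise constant potential $q_{\boldsymbol\tau}\in\mathcal S_{\boldsymbol\tau}$. Boundedness on the finite-measure space $((0,R),\dd\mu_d)$ gives $q_{\boldsymbol\tau}\in L_d^p$. Consequently the infimum
\[
 \iota:=\inf\{\|q-q_0\|_{L_d^p}:q\in\mathcal S_{\boldsymbol\tau}\}
\]
is finite.

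Second, take a minimizing sequence $(q_n)\subset\mathcal S_{\boldsymbol\tau}$. The sequence satisfies $\|q_n\|_{L_d^p}\le\|q_0\|_{L_d^p}+\iota+1$ for large $n$, so it is bounded. Since $1<p<\infty$, $L_d^p$ is reflexive, and a subsequence converges weakly to some $\widehat q\in L_d^p$.

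Third, we pass the constraint to the limit. Theorem~\ref{thm:node-calculus} makes each component $T_{i_\alpha,m}$ weakly sequentially continuous, so
\[
 T_{i_\alpha,m}(\widehat q)=\lim_{n\to\infty}T_{i_\alpha,m}(q_n)=\tau_\alpha\quad\text{for every }\alpha,
\]
and hence $\widehat q\in\mathcal S_{\boldsymbol\tau}$. The norm $q\mapsto\|q-q_0\|_{L_d^p}$ is convex and continuous, hence weakly lower semicontinuous. This yields $\|\widehat q-q_0\|_{L_d^p}\le\liminf_{n\to\infty}\|q_n-q_0\|_{L_d^p}=\iota$, so $\widehat q$ is a minimizer.

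The only substantive ingredient is the weak continuity of the nodal radii, and Theorem~\ref{thm:node-calculus} already supplies it. That theorem relies on Proposition~\ref{prop:eigenpair-compactness}: it gives $C^1([a,R])$ convergence of the eigenfunctions, and the exact zero count $m-1$ then keeps the nodes ordered, so no index can shift in the limit. The one point to check is that feasibility really holds for a \emph{partial} vector. This is the insertion-of-points argument in Proposition~\ref{prop:multi-global-feasibility}, and it is exactly where the compatibility hypothesis enters. Neither the submersion property of Theorem~\ref{thm:multi-submersion} nor any transversality condition is needed for existence; they matter only for the subsequent critical-point analysis.
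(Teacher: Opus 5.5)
Your proposal is correct and follows essentially the same direct-method argument as the paper. The paper likewise obtains nonemptiness from Proposition~\ref{prop:multi-global-feasibility}, weak sequential closedness of the constraint set from the componentwise weak continuity in Theorem~\ref{thm:node-calculus}, a weakly convergent subsequence of a bounded minimizing sequence by reflexivity, and minimality from weak lower semicontinuity of the norm.
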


\begin{proof}

By Proposition~\ref{prop:multi-global-feasibility}, the constraint set
\[
\mathcal A_{\boldsymbol\tau}
:=
\left\{
q\in L_d^p:\mathbf T(q)=\boldsymbol\tau
\right\}
\]
is nonempty. We first show that $\mathcal A_{\boldsymbol\tau}$ is weakly sequentially closed in
$L_d^p$. Let $\{q_n\}\subset \mathcal A_{\boldsymbol\tau}$ and suppose that
\[
q_n\rightharpoonup q
\qquad\text{weakly in }L_d^p.
\]
Since $q_n\in\mathcal A_{\boldsymbol\tau}$, we have
\[
T_{i_\alpha,m}(q_n)=\tau_\alpha,
\qquad 1\le \alpha\le N.
\]
By Theorem~\ref{thm:node-calculus}, each nodal map $T_{i_\alpha,m}$ is weakly
sequentially continuous. Hence
\[
T_{i_\alpha,m}(q_n)
\longrightarrow
T_{i_\alpha,m}(q),
\qquad 1\le \alpha\le N.
\]
Passing to the limit in the preceding identities yields
\[
T_{i_\alpha,m}(q)=\tau_\alpha,
\qquad 1\le \alpha\le N,
\]
and therefore
\(\mathbf T(q)=\boldsymbol\tau.\)
Thus $q\in\mathcal A_{\boldsymbol\tau}$, proving that
$\mathcal A_{\boldsymbol\tau}$ is weakly sequentially closed.

Set
\[
d_{\boldsymbol\tau}
:=
\inf_{q\in\mathcal A_{\boldsymbol\tau}}
\|q-q_0\|_{L_d^p}.
\]
Since $\mathcal A_{\boldsymbol\tau}\neq\varnothing$, we have $d_{\boldsymbol\tau}<\infty$.
Let $\{q_n\}\subset\mathcal A_{\boldsymbol\tau}$ be a minimizing sequence, that is,
\[
\|q_n-q_0\|_{L_d^p}
\longrightarrow d_{\boldsymbol\tau}
\qquad\text{as }n\to\infty.
\]
In particular, $\{\|q_n-q_0\|_{L_d^p}\}$ is bounded. By the triangle
inequality,
\[
\|q_n\|_{L_d^p}
\le
\|q_n-q_0\|_{L_d^p}
+
\|q_0\|_{L_d^p},
\]
so $\{q_n\}$ is bounded in $L_d^p$.
Hence, after passing to a subsequence if necessary, there
exists $q_*\in L_d^p$ such that
\[
q_n\rightharpoonup q_*
\qquad\text{weakly in }L_d^p.
\]
Since $\mathcal A_{\boldsymbol\tau}$ is weakly sequentially closed, it follows that \(q_*\in\mathcal A_{\boldsymbol\tau}\).
Finally, the norm in $L_d^p$ is weakly lower semicontinuous. Therefore,
\(
\|q_*-q_0\|_{L_d^p}\le\liminf_{n\to\infty}\|q_n-q_0\|_{L_d^p}=d_{\boldsymbol\tau}.
\)
On the other hand, since $q_*\in\mathcal A_{\boldsymbol\tau}$, the definition of
$d_{\boldsymbol\tau}$ gives \(d_{\boldsymbol\tau}\le\|q_*-q_0\|_{L_d^p}\).
Consequently,
\(\|q_*-q_0\|_{L_d^p}=d_{\boldsymbol\tau}\),
and hence $q_*$ is a minimizer of problem~\eqref{eq:multi-node-problem}.
\end{proof}

We next characterize the variational structure of nontrivial minimizers under several nodal constraints.
\begin{theorem}\label{thm:multi-critical}
Let $\widehat q$ be a minimizer of \eqref{eq:multi-node-problem} with $\widehat q\ne q_0$.  Let $E=E_m(\cdot;\widehat q)$ be normalized in $L_d^2$, and define $T_\alpha,A_\alpha,B_\alpha,\Gamma_\alpha$ by \eqref{eq:multi-node-quantities}.  Then there exists a nonzero multiplier vector $\boldsymbol\kappa=(\kappa_1,\ldots,\kappa_N)^{\mathsf T}\in\R^N$ such that
\begin{equation}\label{eq:multi-Lagrange}
 \varphi_p(\widehat q-q_0)
 =G(\widehat q)^*\boldsymbol\kappa
 =\sum_{\alpha=1}^N\kappa_\alpha g_\alpha(\cdot;\widehat q).
\end{equation}
If

\[
 H_{\boldsymbol\kappa}(r)
 :=\sum_{\alpha=1}^N\kappa_\alpha
 \left[
 \frac{B_\alpha}{\Gamma_\alpha}\ind_{(0,T_\alpha)}(r)
 -\frac{A_\alpha}{\Gamma_\alpha}\ind_{(T_\alpha,R)}(r)
 \right],
\]

then
\begin{equation}\label{eq:multi-q-reconstruction}
 \widehat q-q_0
 =|H_{\boldsymbol\kappa}|^{p'-2}H_{\boldsymbol\kappa}
 |E|^{2p'-2},
\end{equation}
and $E$ satisfies, weakly on each component of $(0,R)\setminus\{T_1,\ldots,T_N\}$ and hence almost everywhere there,
\begin{equation}\label{eq:multi-semilinear-E}
 -E''-\frac{d-1}{r}E'+\frac{\gamma}{r^2}E+q_0E
 +|H_{\boldsymbol\kappa}|^{p'-2}H_{\boldsymbol\kappa}
 |E|^{2p'-2}E
 =\lambda_m(\widehat q)E.
\end{equation}
Moreover,
\begin{align}
 \int_0^R H_{\boldsymbol\kappa}E^2\dd\mu_d&=0,
 \label{eq:multi-balance}\\
 \|\widehat q-q_0\|_{L_d^p}^p
 &=\int_0^R|H_{\boldsymbol\kappa}|^{p'}|E|^{2p'}\dd\mu_d.
 \label{eq:multi-norm}
\end{align}
\end{theorem}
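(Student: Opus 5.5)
The argument follows the single-node proof of Theorem~\ref{thm:critical-system}, with Theorem~\ref{thm:multi-submersion} supplying regularity of the constraint. Minimize $\mathcal J(q)=\frac1p\|q-q_0\|_{L_d^p}^p$ subject to $\mathbf T(q)=\boldsymbol\tau$. The functional $\mathcal J$ is $C^1$ with $D\mathcal J(q)[h]=\int_0^R\varphi_p(q-q_0)h\dd\mu_d$. Since $G(\widehat q)=D\mathbf T(\widehat q)$ is surjective onto $\R^N$ by Theorem~\ref{thm:multi-submersion}, the Lagrange multiplier rule for a $C^1$ submersion constraint in a Banach space gives $\boldsymbol\kappa\in\R^N$ with $D\mathcal J(\widehat q)=G(\widehat q)^*\boldsymbol\kappa$. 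This is \eqref{eq:multi-Lagrange}. If $\boldsymbol\kappa=0$, then $\varphi_p(\widehat q-q_0)=0$, contradicting $\widehat q\ne q_0$; hence $\boldsymbol\kappa\ne0$.

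Next, factor the right-hand side. Using $g_\alpha=E^2H_\alpha$, it equals $H_{\boldsymbol\kappa}E^2$. Apply $\varphi_{p'}=\varphi_p^{-1}$ and note that $E^2\ge0$, so
\[
\varphi_{p'}(H_{\boldsymbol\kappa}E^2)=|H_{\boldsymbol\kappa}|^{p'-2}H_{\boldsymbol\kappa}|E|^{2p'-2}
\]
pointwise. This gives \eqref{eq:multi-q-reconstruction}.

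On each component of $(0,R)\setminus\{T_1,\dots,T_N\}$, $H_{\boldsymbol\kappa}$ is constant. Substituting $\widehat q=q_0+(\widehat q-q_0)$ into the weak eigenvalue equation for $E=E_m(\cdot;\widehat q)$ yields \eqref{eq:multi-semilinear-E} in the weak sense there. Local regularity away from the origin (as in Proposition~\ref{prop:eigenpair-compactness}) upgrades this to an almost-everywhere statement. The needed integrability holds because $|E|^{2p'-2}E\in L_d^{p}\cdot\mathcal H_{\ell,0}$, which follows from \eqref{eq:multi-q-reconstruction} itself.

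For the balance law, \eqref{eq:translation-orthogonality} gives $\int_0^R g_\alpha\dd\mu_d=0$ for each $\alpha$. Summing with weights $\kappa_\alpha$ gives $\int_0^R H_{\boldsymbol\kappa}E^2\dd\mu_d=0$, which is \eqref{eq:multi-balance}. Alternatively, one can verify directly that $\int H_\alpha E^2\dd\mu_d=(B_\alpha A_\alpha-A_\alpha B_\alpha)/\Gamma_\alpha=0$. Finally, since $p(p'-1)=p'$,
\[
|\widehat q-q_0|^p=|H_{\boldsymbol\kappa}|^{(p'-1)p}|E|^{(2p'-2)p}=|H_{\boldsymbol\kappa}|^{p'}|E|^{2p'},
\]
and integration gives \eqref{eq:multi-norm}.

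The only step that is not purely algebraic is the multiplier rule. It needs the constraint regular at $\widehat q$, which is exactly the surjectivity in Theorem~\ref{thm:multi-submersion} coming from the linear independence in Theorem~\ref{thm:multi-gradient-independence}. I therefore expect no real obstacle beyond citing that result correctly. A minor point is that $E$ may have additional zeros (nodes not in $\mathcal I$) inside the components; these do not affect the pointwise identities, since those hold almost everywhere.
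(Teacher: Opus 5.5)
Your proposal is correct and follows essentially the same route as the paper. The steps match one for one: the Banach-space multiplier rule via the surjectivity in Theorem~\ref{thm:multi-submersion}; $\boldsymbol\kappa\ne0$ because $\widehat q\ne q_0$; inversion by $\varphi_{p'}=\varphi_p^{-1}$ using $g_\alpha=E^2H_\alpha$; substitution into the eigenvalue equation; the zero-mean identity \eqref{eq:translation-orthogonality} for \eqref{eq:multi-balance}; and $(p'-1)p=p'$ for \eqref{eq:multi-norm}.
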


\begin{proof}
The derivative $G(\widehat q)$ is onto by Theorem~\ref{thm:multi-submersion}.
The Banach-space Lagrange multiplier theorem applied to $p^{-1}\|q-q_0\|_{L_d^p}^p$ gives \eqref{eq:multi-Lagrange}.
We claim that $\boldsymbol\kappa\neq \boldsymbol 0$. Indeed, if $\boldsymbol\kappa=\boldsymbol 0$, then
\eqref{eq:multi-Lagrange} gives
\[
\varphi_p(\widehat q-q_0)=0
\qquad\text{a.e. on }(0,R).
\]
Since \(\varphi_p(s)=|s|^{p-2}s\)
vanishes if and only if $s=0$, it follows that
\[
\widehat q-q_0=0
\qquad\text{a.e. on }(0,R),
\]
and hence
\(
\widehat q=q_0
\,\text{in }L_d^p
\).
This contradicts the assumption $\widehat q\neq q_0$. Therefore, \(\boldsymbol\kappa\neq\boldsymbol 0\).
Since the right-hand side equals $H_{\boldsymbol\kappa}E^2$ and $\varphi_p^{-1}=\varphi_{p'}$, formula \eqref{eq:multi-q-reconstruction} follows.  Substitution into the eigenvalue equation gives \eqref{eq:multi-semilinear-E}.  Each nodal gradient has zero integral by \eqref{eq:translation-orthogonality}; hence \eqref{eq:multi-balance}.  Finally, $(p'-1)p=p'$, which yields \eqref{eq:multi-norm}.
\end{proof}

\begin{remark}\label{rem:multi-piecewise}
The function $H_{\boldsymbol\kappa}$ is constant on every component of
\((0,R)\setminus\{T_1,\ldots,T_N\}\).
If its value on such an interval is $c\ne0$ and $U=|c|^{1/2}E$, then \eqref{eq:multi-semilinear-E} becomes
\[
 -U''-\frac{d-1}{r}U'+\frac{\gamma}{r^2}U+q_0U
 +\sgn(c)\varphi_{2p'}(U)
 =\lambda_m(\widehat q)U.
\]
If $c=0$, the equation is linear on that interval.  Thus the two-piece one-node critical system is replaced by a finite multi-piece system whose signs and amplitudes are determined by the multiplier vector.
\end{remark}

\subsection{Local reconstruction as an application of the observation principle}
Fix $q_0\in L_d^p$ and put
\[
 \mathbf T_0:=\mathbf T(q_0),\qquad
 G_0:=D\mathbf T(q_0):L_d^p\to\mathbb R^N.
\]
For $\boldsymbol y\in\mathbb R^N$, define $\rho_0(\boldsymbol y)$ and
$\mathscr R_0(\boldsymbol y)$ by \eqref{eq:abstract-rho-R} with
$\mathcal O=\mathbf T$.

\begin{theorem}
\label{thm:multi-local-theory}
Assume \eqref{eq:main-p-assumption}.  Let
$\boldsymbol\delta\to0$ through vectors for which
$\mathbf T_0+\boldsymbol\delta$ remains compatible, and let
$\widehat q_{\boldsymbol\delta}$ be any minimizer satisfying
$\mathbf T(\widehat q_{\boldsymbol\delta})=\mathbf T_0+\boldsymbol\delta$.
Then
\begin{align}
 \widehat q_{\boldsymbol\delta}-q_0
 &=\mathscr R_0(\boldsymbol\delta)
   +o_{L_d^p}(|\boldsymbol\delta|),
 \label{eq:multi-local-reconstruction}\\
 \|\widehat q_{\boldsymbol\delta}-q_0\|_{L_d^p}
 &=\rho_0(\boldsymbol\delta)+o(|\boldsymbol\delta|).
 \label{eq:multi-local-distance}
\end{align}
If $d\in\{2,3\}$ and $p=2$, define
\[
 \mathbb G_0
 :=\bigl(\langle g_\alpha,g_\beta\rangle_{L_d^2}\bigr)_{\alpha,\beta=1}^N
 =G_0G_0^*.
\]
Then $\mathbb G_0$ is positive definite, the minimizer is unique for all
sufficiently small compatible $\boldsymbol\delta$, depends $C^1$ on the data,
and
\begin{align}
 \mathscr R_0(\boldsymbol\delta)
 &=G_0^*\mathbb G_0^{-1}\boldsymbol\delta,
 \label{eq:Hilbert-right-inverse}\\
 q_{\boldsymbol\delta}
 &=q_0+G_0^*\mathbb G_0^{-1}\boldsymbol\delta
   +O_{L_d^2}(|\boldsymbol\delta|^2),
 \label{eq:multi-Hilbert-second-order}\\
 \|q_{\boldsymbol\delta}-q_0\|_{L_d^2}^2
 &=\boldsymbol\delta^{\mathsf T}\mathbb G_0^{-1}\boldsymbol\delta
   +O(|\boldsymbol\delta|^3).
 \label{eq:Hilbert-reconstruction}
\end{align}
\end{theorem}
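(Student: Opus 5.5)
This is a direct application of Proposition~\ref{prop:finite-observation-principle} with $X=L_d^p$, $M=N$ and $\mathcal O=\mathbf T$. Three hypotheses need checking. First, $\mathbf T$ is weakly sequentially continuous on $L_d^p$, since each component $T_{i_\alpha,m}$ is, by Theorem~\ref{thm:node-calculus}. Second, $\mathbf T$ is continuously Fr\'echet differentiable near $q_0$; this holds componentwise by the same theorem and is restated in Theorem~\ref{thm:multi-submersion}. Third, $G_0=D\mathbf T(q_0)$ is surjective onto $\mathbb R^N$, which is Theorem~\ref{thm:multi-submersion}, itself resting on the gradient independence of Theorem~\ref{thm:multi-gradient-independence}.

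One point needs care: the compatibility restriction on $\boldsymbol\delta$. The abstract principle quantifies over all small $\boldsymbol y\in\mathbb R^N$. Since $\mathbf T_0$ has strictly ordered entries in $(0,R)$, every sufficiently small $\boldsymbol\delta$ keeps $\mathbf T_0+\boldsymbol\delta$ strictly ordered in $(0,R)$, so compatibility is automatic near $0$. Consequently any minimizer $\widehat q_{\boldsymbol\delta}$ is a minimizer of \eqref{eq:abstract-min-problem} with $\boldsymbol y=\boldsymbol\delta$. Part (ii) of the principle then gives \eqref{eq:multi-local-reconstruction} and \eqref{eq:multi-local-distance} verbatim.

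For the Hilbert case $d\in\{2,3\}$, $p=2$, I would verify the extra $C^2$ hypothesis of part (iii). Proposition~\ref{prop:Hilbert-C2} shows that each $T_{i_\alpha,m}$ is $C^2$ on $L_d^2$, so the vector map $\mathbf T$ is $C^2$. Next I identify $G_0^*\boldsymbol\lambda=\sum_\alpha\lambda_\alpha g_\alpha$ through the Riesz identification of $L_d^2$, as in the adjoint formula above Theorem~\ref{thm:multi-submersion}. This gives $G_0G_0^*=(\langle g_\alpha,g_\beta\rangle_{L_d^2})=\mathbb G_0$. Positive definiteness follows directly: $\boldsymbol\lambda^{\mathsf T}\mathbb G_0\boldsymbol\lambda=\|\sum\lambda_\alpha g_\alpha\|^2_{L_d^2}>0$ for $\boldsymbol\lambda\neq0$, by Theorem~\ref{thm:multi-gradient-independence} with $p'=2$. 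Part (iii) then yields \eqref{eq:Hilbert-right-inverse}, local uniqueness, $C^1$ dependence on the data, and the expansions \eqref{eq:multi-Hilbert-second-order} and \eqref{eq:Hilbert-reconstruction}, which are \eqref{eq:abstract-Hilbert-expansion} and \eqref{eq:abstract-Hilbert-cost} with $\mathbb M_0=\mathbb G_0$.

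There is no substantive analytic obstacle, since all the analysis sits in the earlier results. The only things to watch are matching the compatibility restriction with the neighborhood of $\boldsymbol y=0$ used in the abstract principle, and making sure the uniqueness statement in (iii) applies to \emph{every} global minimizer, not just to critical points. The principle provides this, because its distance asymptotics force every global minimizer into the implicit-function neighborhood.
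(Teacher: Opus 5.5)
Your proposal is correct and follows the paper's proof essentially verbatim. You apply Proposition~\ref{prop:finite-observation-principle} with $\mathcal O=\mathbf T$, taking weak sequential continuity and $C^1$ regularity from Theorem~\ref{thm:node-calculus} and surjectivity from Theorem~\ref{thm:multi-submersion}; in the Hilbert case you take positive definiteness of $\mathbb G_0$ from Theorem~\ref{thm:multi-gradient-independence} and $C^2$ regularity from Proposition~\ref{prop:Hilbert-C2} applied componentwise. Your extra observation that compatibility is automatic for small $\boldsymbol\delta$, because $\mathbf T_0$ is strictly ordered in $(0,R)$, is a correct detail that the paper leaves implicit.
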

\begin{proof}
Theorem~\ref{thm:multi-submersion} gives the surjectivity of $G_0$, while
Theorem~\ref{thm:node-calculus} gives weak sequential continuity of every
component of $\mathbf T$.  Thus
Proposition~\ref{prop:finite-observation-principle} applies and yields
\eqref{eq:multi-local-reconstruction}--\eqref{eq:multi-local-distance}.
For $p=2$ and $d\in\{2,3\}$, Theorem~\ref{thm:multi-gradient-independence}
makes $\mathbb G_0$ positive definite, and
Proposition~\ref{prop:Hilbert-C2}, applied componentwise, makes $\mathbf T$
$C^2$.  The Hilbert conclusions now follow from
\eqref{eq:abstract-Hilbert-right-inverse}--\eqref{eq:abstract-Hilbert-cost}.
\end{proof}

\begin{corollary}\label{cor:multi-value-Hessian}
Define
\[
 \mathcal V(\boldsymbol\delta)
 :=\frac12\min\left\{
 \|q-q_0\|_{L_d^2}^2:
 \mathbf T(q)=\mathbf T_0+\boldsymbol\delta
 \right\}.
\]
Then
\(
 \mathcal V(\boldsymbol\delta)
 =\frac12\boldsymbol\delta^{\mathsf T}
 \mathbb G_0^{-1}\boldsymbol\delta
 +O(|\boldsymbol\delta|^3).
\)
In particular,
\(
 D\mathcal V(0)=0,
 \,
 D^2\mathcal V(0)=\mathbb G_0^{-1}.
\)
\end{corollary}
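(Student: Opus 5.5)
This is a direct consequence of Theorem~\ref{thm:multi-local-theory} in the Hilbert case $p=2$, $d\in\{2,3\}$; the plan only has to connect $\mathcal V$ to the cost expansion \eqref{eq:Hilbert-reconstruction}.

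\emph{Step 1 (identify $\mathcal V$ with the squared optimal distance).} For each compatible $\boldsymbol\delta$, Theorem~\ref{thm:multi-existence} gives a minimizer $q_{\boldsymbol\delta}$, so the minimum in $\mathcal V$ is attained and
\[
 \mathcal V(\boldsymbol\delta)=\tfrac12\|q_{\boldsymbol\delta}-q_0\|_{L_d^2}^2 .
\]
For $|\boldsymbol\delta|$ small this minimizer is unique by Theorem~\ref{thm:multi-local-theory}, although uniqueness is not needed: \eqref{eq:Hilbert-reconstruction} holds for every minimizer, and all minimizers have the same cost anyway.

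\emph{Step 2 (expansion).} Substitute \eqref{eq:Hilbert-reconstruction} into Step 1. This gives
\[
 \mathcal V(\boldsymbol\delta)=\tfrac12\boldsymbol\delta^{\mathsf T}\mathbb G_0^{-1}\boldsymbol\delta+O(|\boldsymbol\delta|^3).
\]
Here $\mathbb G_0$ is positive definite by Theorem~\ref{thm:multi-gradient-independence}, so $\mathbb G_0^{-1}$ exists.

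\emph{Step 3 (derivatives).} The expansion says $\mathcal V$ equals a quadratic form plus an $O(|\boldsymbol\delta|^3)$ remainder, hence
\[
 \mathcal V(0)=0,\qquad |\mathcal V(\boldsymbol\delta)|=O(|\boldsymbol\delta|^2),
\]
which yields $D\mathcal V(0)=0$.

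For the Hessian, the safest route is to read $D^2\mathcal V(0)=\mathbb G_0^{-1}$ as the coefficient of the second-order Taylor (Peano) expansion, which is exactly what the cubic remainder provides. If a genuine classical second derivative is wanted, use that Theorem~\ref{thm:multi-local-theory} makes $\boldsymbol\delta\mapsto q_{\boldsymbol\delta}$ a $C^1$ map into $L_d^2$. Then $\mathcal V$ is $C^1$, and by the chain rule
\[
 D\mathcal V(\boldsymbol\delta)=\langle q_{\boldsymbol\delta}-q_0,\,Dq_{\boldsymbol\delta}\rangle .
\]
Since $Dq_{\boldsymbol\delta}\to G_0^*\mathbb G_0^{-1}$ and $q_{\boldsymbol\delta}-q_0=G_0^*\mathbb G_0^{-1}\boldsymbol\delta+O(|\boldsymbol\delta|^2)$, this gives
\[
 D\mathcal V(\boldsymbol\delta)=\mathbb G_0^{-1}\boldsymbol\delta\,\bigl(G_0G_0^*\bigr)\mathbb G_0^{-1}+o(|\boldsymbol\delta|)=\mathbb G_0^{-1}\boldsymbol\delta+o(|\boldsymbol\delta|),
\]
using $G_0G_0^*=\mathbb G_0$. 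This shows $D\mathcal V$ is differentiable at $0$ with derivative $\mathbb G_0^{-1}$.

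\emph{Main obstacle.} The only delicate point is the domain. Compatibility, i.e.\ ordered targets, is an open condition near $\mathbf T_0$ because the nodes of $q_0$ are strictly ordered. So $\mathcal V$ is defined on a full neighborhood of $0$, and the derivatives make sense there.
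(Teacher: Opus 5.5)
Your proof is correct, but it takes a different route from the paper's. The paper starts from the $C^1$ KKT branch $(h(\boldsymbol\delta),\boldsymbol\kappa(\boldsymbol\delta))$. It combines the differentiated constraint $G(q_0+h)Dh[\boldsymbol y]=\boldsymbol y$ with the first KKT equation $h=G(q_0+h)^*\boldsymbol\kappa$ to get the envelope identity $D\mathcal V(\boldsymbol\delta)=\boldsymbol\kappa(\boldsymbol\delta)$. Since $\boldsymbol\kappa$ is $C^1$ with $\boldsymbol\kappa(\boldsymbol\delta)=\mathbb G_0^{-1}\boldsymbol\delta+O(|\boldsymbol\delta|^2)$, this gives $\mathcal V\in C^2$ on a neighborhood of $0$. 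The quadratic expansion is then recovered by integrating $\boldsymbol\kappa$ along the segment $t\boldsymbol\delta$.

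You instead quote the cost expansion \eqref{eq:Hilbert-reconstruction} directly for the value. For the Hessian you stop at the chain-rule formula $D\mathcal V(\boldsymbol\delta)[\boldsymbol y]=\langle q_{\boldsymbol\delta}-q_0,Dq_{\boldsymbol\delta}[\boldsymbol y]\rangle$ and insert the first-order expansion of $q_{\boldsymbol\delta}$. This is shorter and avoids the KKT substitution. However, it only shows that $D\mathcal V$ is differentiable at the origin. That is all the statement requires, but the paper's route also gives $C^2$ regularity near $0$ and identifies the multiplier as the marginal reconstruction cost. Your formula becomes the paper's identity once you substitute $h=G(q_0+h)^*\boldsymbol\kappa$, since $\langle G(q_0+h)^*\boldsymbol\kappa,Dh[\boldsymbol y]\rangle=\boldsymbol\kappa\cdot\boldsymbol y$.

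One small notational fix: your displayed expression should read $(\mathbb G_0^{-1}\boldsymbol\delta)^{\mathsf T}G_0G_0^*\mathbb G_0^{-1}$. Reducing it to $\boldsymbol\delta^{\mathsf T}\mathbb G_0^{-1}$ uses the symmetry of $\mathbb G_0$.
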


\begin{proof}
Let $(h(\boldsymbol\delta),\boldsymbol\kappa(\boldsymbol\delta))$ be the
$C^1$ KKT branch produced in
Theorem~\ref{thm:multi-local-theory}.  For
$\boldsymbol y\in\mathbb R^N$, differentiation of the constraint gives
\(G(q_0+h(\boldsymbol\delta))
 Dh(\boldsymbol\delta)[\boldsymbol y]=\boldsymbol y\).
Using the first KKT equation,
$h=G(q_0+h)^*\boldsymbol\kappa$, we obtain
\begin{align*}
 D\mathcal V(\boldsymbol\delta)[\boldsymbol y]
 &=\langle h(\boldsymbol\delta),
 Dh(\boldsymbol\delta)[\boldsymbol y]\rangle_{L_d^2}=\boldsymbol\kappa(\boldsymbol\delta)\cdot\boldsymbol y.
\end{align*}
Thus $\mathcal V$ is $C^2$ and
$D\mathcal V(\boldsymbol\delta)=\boldsymbol\kappa(\boldsymbol\delta)$.
The expansion obtained in the proof of
Theorem~\ref{thm:multi-local-theory} gives
\[
 \boldsymbol\kappa(\boldsymbol\delta)
 =\mathbb G_0^{-1}\boldsymbol\delta+O(|\boldsymbol\delta|^2).
\]
Integrating along the segment $t\boldsymbol\delta$, $0\le t\le1$, yields
\[
 \mathcal V(\boldsymbol\delta)
 =\int_0^1
 \boldsymbol\kappa(t\boldsymbol\delta)\cdot\boldsymbol\delta\,\dd t
 =\frac12\boldsymbol\delta^{\mathsf T}
 \mathbb G_0^{-1}\boldsymbol\delta+O(|\boldsymbol\delta|^3),
\]
and the derivative identities follow.
\end{proof}

\begin{remark}\label{rem:multi-condition-number}
The matrix $\mathbb G_0^{-1}$ is the local metric of reconstruction cost in nodal-data space.  In particular,
\[
 \frac{|\boldsymbol\delta|^2}{\lambda_{\max}(\mathbb G_0)}
 \le\boldsymbol\delta^{\mathsf T}\mathbb G_0^{-1}\boldsymbol\delta
 \le\frac{|\boldsymbol\delta|^2}{\lambda_{\min}(\mathbb G_0)}.
\]
A small value of $\lambda_{\min}(\mathbb G_0)$ signals a poorly identifiable combination of nodal displacements, while the condition number of $\mathbb G_0$ quantifies anisotropy of the local inverse problem.
\end{remark}

\section{Mixed eigenmodes, angular-momentum sectors, and spectral data}
\label{sec:mixed-data}

The preceding results hold for every fixed $\ell$.  We now restore the full notation to consider observations coming from different angular-momentum sectors and eigenmodes.  For $j\in\mathbb N_0$, write
\begin{equation}\label{eq:full-sector-parameters}
 \gamma_j:=j(j+d-2),\qquad \nu_j:=j+\frac{d-2}{2}.
\end{equation}

\subsection{Mixed eigenmodes and angular-momentum sectors}

Let
\begin{equation}\label{eq:mixed-index-family}
 \mathfrak a
 =\{(\ell_\alpha,m_\alpha,i_\alpha):1\leq\alpha\leq N\}
\end{equation}
be a family of nodal observations, and define
\begin{equation}\label{eq:mixed-nodal-map}
 \mathbf T_{\mathfrak a}(q)
 :=\left(
 T_{i_1,m_1}^{(\ell_1)}(q),\ldots,
 T_{i_N,m_N}^{(\ell_N)}(q)
 \right)^{\mathsf T}.
\end{equation}
For each $\alpha$, let
$E_\alpha=E_{\ell_\alpha,m_\alpha}(\cdot;q)$ be normalized in $L_d^2$, put
$T_\alpha=T_{i_\alpha,m_\alpha}^{(\ell_\alpha)}(q)$, and define
\[
 A_\alpha=\int_0^{T_\alpha}E_\alpha^2\dd\mu_d,\qquad
 B_\alpha=\int_{T_\alpha}^RE_\alpha^2\dd\mu_d,\qquad
 \Gamma_\alpha=T_\alpha^{d-1}E_\alpha'(T_\alpha)^2.
\]
The corresponding gradient is
\begin{equation}\label{eq:mixed-gradient}
 g_\alpha(r;q)=
 \frac{B_\alpha}{\Gamma_\alpha}E_\alpha(r)^2\ind_{(0,T_\alpha)}(r)
 -\frac{A_\alpha}{\Gamma_\alpha}E_\alpha(r)^2\ind_{(T_\alpha,R)}(r).
\end{equation}

\begin{theorem}
\label{thm:mixed-transversality}
Assume \eqref{eq:main-p-assumption}.  The map
$\mathbf T_{\mathfrak a}:L_d^p\to\mathbb R^N$ is continuously
Fr\'echet differentiable and
\begin{equation}\label{eq:mixed-derivative}
 D\mathbf T_{\mathfrak a}(q)h
 =\left(\int_0^R g_\alpha(r;q)h(r)\dd\mu_d(r)\right)_{\alpha=1}^N.
\end{equation}
At a fixed potential $q_*$, the following are equivalent:
\begin{enumerate}
\renewcommand{\labelenumi}{\textup{(\roman{enumi})}}
 \item $D\mathbf T_{\mathfrak a}(q_*)$ is surjective;
 \item $g_1(\cdot;q_*),\ldots,g_N(\cdot;q_*)$ are linearly independent
 in $L_d^{p'}$;
 \item there exist $h_1,\ldots,h_N\in L_d^p$ such that the matrix
 \begin{equation}\label{eq:mixed-test-matrix}
 M_{\alpha\beta}
 :=\int_0^R g_\alpha(r;q_*)h_\beta(r)\dd\mu_d(r)
 \end{equation}
 is nonsingular.
\end{enumerate}
If these conditions hold, they remain valid in a neighborhood of
$q_*$.  The mixed map is a submersion there, every sufficiently nearby
target vector is feasible, and every corresponding level set is a
$C^1$ Banach submanifold of codimension $N$.
\end{theorem}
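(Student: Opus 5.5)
The plan is to apply Theorem~\ref{thm:node-calculus} to each component and then reduce the equivalences to finite-dimensional linear algebra. The fixed-sector theory of Section~\ref{sec:critical} holds for every $\ell$, so each component $q\mapsto T^{(\ell_\alpha)}_{i_\alpha,m_\alpha}(q)$ is continuously Fr\'echet differentiable on $L_d^p$. Its derivative is $h\mapsto\int_0^R g_\alpha h\dd\mu_d$, with $g_\alpha\in L_d^{p'}$ given by \eqref{eq:mixed-gradient}. A map into $\mathbb R^N$ whose components are $C^1$ is $C^1$, and this gives \eqref{eq:mixed-derivative}. The operator $G_*:=D\mathbf T_{\mathfrak a}(q_*)$ then has adjoint $G_*^*\boldsymbol\lambda=\sum_\alpha\lambda_\alpha g_\alpha(\cdot;q_*)$ in the $L_d^p$--$L_d^{p'}$ duality.

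For (i)$\Leftrightarrow$(ii), I follow the proof of Theorem~\ref{thm:multi-submersion}. The range of $G_*$ is a subspace of $\mathbb R^N$, so it is proper if and only if some $\boldsymbol\lambda\neq0$ annihilates it. That means $\sum_\alpha\lambda_\alpha\int g_\alpha h\dd\mu_d=0$ for all $h\in L_d^p$. Since $L_d^{p'}$ is the dual of $L_d^p$, this is equivalent to $\sum_\alpha\lambda_\alpha g_\alpha=0$ in $L_d^{p'}$, which is linear dependence.

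For (i)$\Rightarrow$(iii), surjectivity lets me pick $h_\beta$ with $G_*h_\beta=e_\beta$; then $M$ is the identity matrix. For (iii)$\Rightarrow$(i), the columns of $M$ are the vectors $G_*h_\beta$. If $M$ is nonsingular these vectors span $\mathbb R^N$, so $G_*$ is onto.

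For the neighbourhood statement, fix $h_1,\dots,h_N$ as in (iii). The matrix $M(q)_{\alpha\beta}=D\mathbf T_{\mathfrak a}(q)h_\beta$ depends continuously on $q$ because $D\mathbf T_{\mathfrak a}$ is continuous in operator norm. Nonsingularity is an open condition, so $\det M(q)\neq0$ near $q_*$, and (iii) holds there; (i) and (ii) follow by the equivalences already shown. Feasibility of nearby targets comes from the map $\boldsymbol x\mapsto\mathbf T_{\mathfrak a}(q_*+\sum_\beta x_\beta h_\beta)$ from $\mathbb R^N$ to $\mathbb R^N$. It is $C^1$ with invertible Jacobian $M$ at $\boldsymbol x=0$, so by the finite-dimensional inverse function theorem its image contains a neighbourhood of $\mathbf T_{\mathfrak a}(q_*)$. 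The same argument works at each point of the neighbourhood. Finally, the Banach-space submersion theorem applies, since $\ker D\mathbf T_{\mathfrak a}(q)$ is closed of codimension $N$ and complemented by $\operatorname{span}\{h_\beta\}$. It shows that each level set is locally a $C^1$ submanifold of codimension $N$.

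No step is genuinely hard. The only point needing care is that the componentwise $C^1$ regularity from Theorem~\ref{thm:node-calculus} gives operator-norm continuity of the whole derivative uniformly across different sectors $\ell_\alpha$. This holds because there are finitely many components and each one is continuous in operator norm.
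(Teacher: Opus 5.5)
Your proposal is correct and follows essentially the same route as the paper. Both arguments apply Theorem~\ref{thm:node-calculus} componentwise, use the annihilator argument for (i)$\Leftrightarrow$(ii), take preimages of coordinate vectors for (iii), use continuity of $\det M(q)$ for local persistence, and invoke the submersion theorem for the remaining claims. Your explicit finite-dimensional inverse-function argument on $q_*+\operatorname{span}\{h_\beta\}$ for feasibility of nearby targets just spells out a step the paper leaves to the submersion theorem.
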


\begin{proof}
Formula \eqref{eq:mixed-derivative} follows componentwise from
Theorem~\ref{thm:node-calculus}, applied in each fixed sector.  Surjectivity fails if and
only if a nonzero vector $c\in\mathbb R^N$ annihilates the range.  This
is equivalent to
$\sum_{\alpha=1}^Nc_\alpha g_\alpha=0$ in $L_d^{p'}$, proving the
equivalence of (i) and (ii).  If (i) holds, choose preimages $h_\beta$
of the coordinate vectors to obtain (iii).  Conversely, the
nonsingularity of \eqref{eq:mixed-test-matrix} forces the range to be
all of $\mathbb R^N$.  Continuity of the gradients makes the determinant
of $M(q)$ continuous, so transversality persists locally.  The
submersion theorem gives the remaining assertions.
\end{proof}

\begin{theorem}
\label{thm:mixed-local-optimization}
Let $q_0\in L_d^p$ satisfy the transversality conditions of
Theorem~\ref{thm:mixed-transversality}, and put
$\mathbf T_0=\mathbf T_{\mathfrak a}(q_0)$.  For every sufficiently
small $\boldsymbol\delta\in\mathbb R^N$, the problem
\begin{equation}\label{eq:mixed-optimization}
 \min\left\{\|q-q_0\|_{L_d^p}:
 \mathbf T_{\mathfrak a}(q)=\mathbf T_0+\boldsymbol\delta\right\}
\end{equation}
has a minimizer $\widehat q_{\boldsymbol\delta}$.  If it is nontrivial,
then for some multiplier vector $\boldsymbol\kappa\ne0$,
\begin{equation}\label{eq:mixed-KKT}
 \varphi_p(\widehat q_{\boldsymbol\delta}-q_0)
 =\sum_{\alpha=1}^N\kappa_\alpha
 g_\alpha(\cdot;\widehat q_{\boldsymbol\delta}).
\end{equation}
Equivalently, with
\begin{equation}\label{eq:mixed-Q}
 Q_{\boldsymbol\kappa}
 :=\varphi_{p'}\left(
 \sum_{\beta=1}^N\kappa_\beta
 g_\beta(\cdot;\widehat q_{\boldsymbol\delta})
 \right),
\end{equation}
set
$E_\alpha:=E_{\ell_\alpha,m_\alpha}
(\cdot;\widehat q_{\boldsymbol\delta})$ in the following equation.
Then each observed eigenfunction satisfies the coupled system
\begin{equation}\label{eq:mixed-coupled-system}
 -E_\alpha''-\frac{d-1}{r}E_\alpha'
 +\frac{\gamma_{\ell_\alpha}}{r^2}E_\alpha
 +(q_0+Q_{\boldsymbol\kappa})E_\alpha
 =\lambda_{\ell_\alpha,m_\alpha}
 (\widehat q_{\boldsymbol\delta})E_\alpha.
\end{equation}

Let $G_0=D\mathbf T_{\mathfrak a}(q_0)$ and let
$\mathscr R_0(\boldsymbol\delta)$ be the unique minimum-$L_d^p$-norm
solution of $G_0h=\boldsymbol\delta$.  Then every family of minimizers
satisfies
\begin{equation}\label{eq:mixed-local-reconstruction}
 \widehat q_{\boldsymbol\delta}-q_0
 =\mathscr R_0(\boldsymbol\delta)
 +o_{L_d^p}(|\boldsymbol\delta|).
\end{equation}
If $p=2$ and $d\in\{2,3\}$, the transversality condition is equivalent
to positive definiteness of the computable Gram matrix
\begin{equation}\label{eq:mixed-Gram}
 \mathbb G_{\mathfrak a,0}
 :=\left(\langle g_\alpha,g_\beta\rangle_{L_d^2}\right)_{\alpha,\beta=1}^N.
\end{equation}
In this case
\begin{equation}\label{eq:mixed-Hilbert-reconstruction}
 \mathscr R_0(\boldsymbol\delta)
 =G_0^*\mathbb G_{\mathfrak a,0}^{-1}\boldsymbol\delta,
\end{equation}
and, for every sufficiently small data vector, the global minimizer is
unique.
\end{theorem}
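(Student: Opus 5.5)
The plan is to reduce everything to Proposition~\ref{prop:finite-observation-principle} with $\mathcal O=\mathbf T_{\mathfrak a}$, supplemented by the Lagrange multiplier theorem at the minimizer.

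First, the hypotheses of the observation principle. Weak sequential continuity of $\mathbf T_{\mathfrak a}$ follows componentwise from Theorem~\ref{thm:node-calculus}, applied in each sector $\ell_\alpha$. Continuous Fr\'echet differentiability and formula \eqref{eq:mixed-derivative} are provided by Theorem~\ref{thm:mixed-transversality}. Surjectivity of $G_0$ is exactly the transversality assumption at $q_0$. Part (ii) of the principle then gives existence of a minimizer for small $\boldsymbol\delta$ (the feasible comparison potential of part (i) shows the constraint set is nonempty), together with the asymptotic formula \eqref{eq:mixed-local-reconstruction}.

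Next, the KKT system \eqref{eq:mixed-KKT}. Transversality is an open condition by Theorem~\ref{thm:mixed-transversality}, and $\|\widehat q_{\boldsymbol\delta}-q_0\|_{L_d^p}=O(|\boldsymbol\delta|)$ by \eqref{eq:abstract-distance-asymptotic}. Hence for small $\boldsymbol\delta$ the derivative $D\mathbf T_{\mathfrak a}(\widehat q_{\boldsymbol\delta})$ is still surjective. I would apply the Banach-space Lagrange multiplier theorem to $p^{-1}\|q-q_0\|_{L_d^p}^p$, exactly as in Theorem~\ref{thm:multi-critical}. This yields \eqref{eq:mixed-KKT}, and $\boldsymbol\kappa\neq0$ because $\widehat q_{\boldsymbol\delta}\neq q_0$. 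Inverting with $\varphi_{p'}=\varphi_p^{-1}$ gives $\widehat q_{\boldsymbol\delta}=q_0+Q_{\boldsymbol\kappa}$. Substituting this into each sector eigenvalue equation for $E_\alpha$ gives \eqref{eq:mixed-coupled-system}. The point to note is that one common potential $Q_{\boldsymbol\kappa}$ couples all observed modes, rather than a piecewise rescaling as in the single-mode case.

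For $p=2$, $d\in\{2,3\}$, I would argue as follows.
\begin{itemize}
\item The Gram matrix is $\mathbb G_{\mathfrak a,0}=G_0G_0^*$, and $\boldsymbol c^{\mathsf T}\mathbb G_{\mathfrak a,0}\boldsymbol c=\|\sum_\alpha c_\alpha g_\alpha\|_{L_d^2}^2$. So positive definiteness is equivalent to linear independence of the $g_\alpha$, i.e.\ to condition (ii) of Theorem~\ref{thm:mixed-transversality}.
\item Proposition~\ref{prop:Hilbert-C2} makes each component $T^{(\ell_\alpha)}_{i_\alpha,m_\alpha}$ a $C^2$ map on $L_d^2$; its proof is sector-independent. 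Hence $\mathbf T_{\mathfrak a}$ is $C^2$.
\item Part (iii) of Proposition~\ref{prop:finite-observation-principle} then gives \eqref{eq:mixed-Hilbert-reconstruction} and uniqueness of the global minimizer for small data.
\end{itemize}

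The main obstacle is the uniformity step: all minimizers must lie in the neighborhood where surjectivity and the implicit-function KKT branch are valid. This is handled by the a priori bound $O(|\boldsymbol\delta|)$ from the comparison potential, together with the uniform bound $|\boldsymbol\kappa|=O(|\boldsymbol\delta|)$, which comes from the lower bound on $D\mathbf T_{\mathfrak a}(q)^*$ near $q_0$. Both are already contained in the proof of the observation principle, so I would check only that nothing there used the same-mode structure.
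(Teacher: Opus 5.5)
Your proposal is correct and follows the paper's own proof essentially step for step. You reduce to Proposition~\ref{prop:finite-observation-principle} with $\mathcal O=\mathbf T_{\mathfrak a}$, taking weak continuity from Theorem~\ref{thm:node-calculus} and surjectivity from transversality. You then use persistence of surjectivity at the minimizer to invoke the Lagrange multiplier rule and invert with $\varphi_{p'}$. For $p=2$, $d\in\{2,3\}$, you use sectorwise $C^2$ regularity from Proposition~\ref{prop:Hilbert-C2} to feed the Hilbert clause of the observation principle. Your explicit justifications of the persistence step (the $O(|\boldsymbol\delta|)$ bound plus openness of transversality) and of the Gram equivalence via $\boldsymbol c^{\mathsf T}\mathbb G_{\mathfrak a,0}\boldsymbol c=\|\sum_\alpha c_\alpha g_\alpha\|_{L_d^2}^2$ spell out what the paper asserts in one line each.
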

\begin{proof}
By Theorem~\ref{thm:mixed-transversality}, the mixed observation map is $C^1$
and has surjective derivative at $q_0$; its components are weakly sequentially
continuous by Theorem~\ref{thm:node-calculus}.  Hence
Proposition~\ref{prop:finite-observation-principle}, applied to
$\mathcal O=\mathbf T_{\mathfrak a}$, gives existence of the nearby global
minimizer and the expansion \eqref{eq:mixed-local-reconstruction}.
Surjectivity persists at the minimizer, so the Banach-space Lagrange
multiplier theorem gives \eqref{eq:mixed-KKT}; applying the inverse duality
map yields \eqref{eq:mixed-Q}--\eqref{eq:mixed-coupled-system}.

If $p=2$ and $d\in\{2,3\}$, transversality is equivalent to positive
definiteness of \eqref{eq:mixed-Gram}.  Proposition~\ref{prop:Hilbert-C2},
applied sectorwise, makes the mixed map $C^2$.  The Hilbert part of
Proposition~\ref{prop:finite-observation-principle} therefore gives
\eqref{eq:mixed-Hilbert-reconstruction} and uniqueness of the small-data
global minimizer.
\end{proof}

\subsection{Simultaneous spectral and nodal constraints}

For one fixed pair $(\ell,m)$ and
$\mathcal I=\{i_1<\cdots<i_N\}$, define the augmented observation map
\begin{equation}\label{eq:spectral-nodal-map}
 \mathcal F(q)=\mathcal F_{\ell,m,\mathcal I}(q)
 :=\left(
 \lambda_{\ell,m}(q),
 T_{i_1,m}^{(\ell)}(q),\ldots,
 T_{i_N,m}^{(\ell)}(q)
 \right)^{\mathsf T}\in\mathbb R^{N+1}.
\end{equation}
At a reference potential $q_0$, write
\begin{equation}\label{eq:augmented-linearization}
 \mathcal F_0:=\mathcal F(q_0),\qquad
 \mathcal G_0:=D\mathcal F(q_0):L_d^p\to\mathbb R^{N+1}.
\end{equation}
Let
\begin{equation}\label{eq:augmented-gradients}
 \psi_0:=E_{\ell,m}(\cdot;q_0)^2,
 \qquad
 \psi_\alpha:=g_{i_\alpha,m}^{(\ell)}(\cdot;q_0),
 \quad1\le\alpha\le N.
\end{equation}
Thus
\[
 \mathcal G_0h
 =\left(\int_0^R\psi_a(r)h(r)\dd\mu_d(r)\right)_{a=0}^N.
\]
For $\boldsymbol y\in\mathbb R^{N+1}$, define
\begin{equation}\label{eq:augmented-minimum-right-inverse}
 \rho_{\mathcal F,0}(\boldsymbol y)
 :=\min\{\|h\|_{L_d^p}:\mathcal G_0h=\boldsymbol y\},
 \qquad
 \mathscr R_{\mathcal F,0}(\boldsymbol y)
 :=\operatorname*{argmin}_{\mathcal G_0h=\boldsymbol y}
 \|h\|_{L_d^p}.
\end{equation}

\begin{theorem}\label{thm:spectral-nodal}
Assume \eqref{eq:main-p-assumption}.
\begin{enumerate}
\renewcommand{\labelenumi}{\textup{(\roman{enumi})}}
 \item The map \eqref{eq:spectral-nodal-map} is a $C^1$ submersion at every
 $q\in L_d^p$.

 \item For every $\Lambda\in\mathbb R$ and every compatible nodal target
 vector $\boldsymbol\tau$, the joint constraint set
 \begin{equation}\label{eq:spectral-nodal-constraint}
  \left\{q:\lambda_{\ell,m}(q)=\Lambda,
  \quad T_{i_\alpha,m}^{(\ell)}(q)=\tau_\alpha,
  \ 1\le\alpha\le N\right\}
 \end{equation}
 is nonempty, and the minimum-distance problem to any $q_0\in L_d^p$
 has a minimizer.  At every nontrivial minimizer there are multipliers
 $(\eta,\boldsymbol\kappa)\in\mathbb R\times\mathbb R^N$, not all zero,
 such that
 \begin{equation}\label{eq:spectral-nodal-KKT}
  \varphi_p(\widehat q-q_0)
  =\eta E_{\ell,m}(\cdot;\widehat q)^2
  +\sum_{\alpha=1}^N\kappa_\alpha
  g_{i_\alpha,m}^{(\ell)}(\cdot;\widehat q).
 \end{equation}

 \item Let $\boldsymbol y=(\xi,\boldsymbol\delta_T)
 \in\mathbb R^{N+1}$ tend to zero through data for which
 $\mathcal F_0+\boldsymbol y$ is compatible, where $\xi$ is the
 spectral displacement and $\boldsymbol\delta_T$ is the nodal displacement,
 and let $\widehat q_{\boldsymbol y}$ be any minimum-distance potential satisfying
 $\mathcal F(\widehat q_{\boldsymbol y})=\mathcal F_0+\boldsymbol y$.
 Then
 \begin{align}
  \widehat q_{\boldsymbol y}-q_0
  &=\mathscr R_{\mathcal F,0}(\boldsymbol y)
    +o_{L_d^p}(|\boldsymbol y|),
  \label{eq:augmented-local-reconstruction}\\
  \|\widehat q_{\boldsymbol y}-q_0\|_{L_d^p}
  &=\rho_{\mathcal F,0}(\boldsymbol y)+o(|\boldsymbol y|).
  \label{eq:augmented-local-distance}
 \end{align}

 \item If $p=2$ and $d\in\{2,3\}$, define the augmented Gram matrix
 \begin{equation}\label{eq:augmented-Gram}
  \mathbb M_0
  :=\big(\langle\psi_a,\psi_b\rangle_{L_d^2}\big)_{a,b=0}^N
  =\mathcal G_0\mathcal G_0^*.
 \end{equation}
 Then $\mathbb M_0$ is positive definite and, for all sufficiently small
 compatible $\boldsymbol y$, the minimizer is unique and depends $C^1$ on
 $\boldsymbol y$.  Moreover,
 \begin{align}
  \mathscr R_{\mathcal F,0}(\boldsymbol y)
  &=\mathcal G_0^*\mathbb M_0^{-1}\boldsymbol y,
  \label{eq:augmented-Hilbert-right-inverse}\\
  \widehat q_{\boldsymbol y}-q_0
  &=\mathcal G_0^*\mathbb M_0^{-1}\boldsymbol y
    +O_{L_d^2}(|\boldsymbol y|^2),
  \label{eq:augmented-Hilbert-reconstruction}\\
  \|\widehat q_{\boldsymbol y}-q_0\|_{L_d^2}^2
  &=\boldsymbol y^{\mathsf T}\mathbb M_0^{-1}\boldsymbol y
    +O(|\boldsymbol y|^3).
  \label{eq:augmented-Hilbert-cost}
 \end{align}
\end{enumerate}
\end{theorem}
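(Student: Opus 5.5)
The plan is to reduce everything to Proposition~\ref{prop:finite-observation-principle} applied to $\mathcal O=\mathcal F$. The only genuinely new ingredient is surjectivity of $D\mathcal F(q)$, i.e.\ linear independence of
\[
 E^2,\quad g_{i_1,m},\ \ldots,\ g_{i_N,m}
\]
in $L_d^{p'}$ at an arbitrary $q$, where $E=E_{\ell,m}(\cdot;q)$. Regularity comes directly from earlier results. The eigenvalue is $C^1$ with derivative given by \eqref{eq:Hellmann-Feynman} (Proposition~\ref{prop:eigenpair-differentiability}), and each node is $C^1$ by Theorem~\ref{thm:node-calculus}.

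For independence, suppose
\[
 \eta E^2+\sum_\alpha c_\alpha g_\alpha=0 .
\]
Since $E^2>0$ a.e., I divide by it to get $\eta+\sum_\alpha c_\alpha H_\alpha=0$ a.e. This is a step function whose jumps occur only at the distinct $T_\alpha$, with jump $-c_\alpha/\Gamma_\alpha$. Exactly as in Theorem~\ref{thm:multi-gradient-independence}, this forces $c_\alpha=0$, and then $\eta=0$. Surjectivity now follows as in Theorem~\ref{thm:multi-submersion}, and the submersion theorem gives (i).

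For (ii), nonemptiness is the one place needing care, since the eigenvalue must also be hit exactly. I will use the gluing construction of Propositions~\ref{prop:global-feasibility} and~\ref{prop:multi-global-feasibility}. It completes $\boldsymbol\tau$ to a full nodal vector and sets $q_*=\lambda_*-\kappa_j^2$ on each subinterval, with $\lambda_*\in\mathbb R$ free. The glued function is an $m$th eigenfunction with eigenvalue $\lambda_*$, so choosing $\lambda_*=\Lambda$ realizes the constraint.

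Existence of a minimizer then follows by the direct method of Theorem~\ref{thm:multi-existence}. The constraint set is weakly sequentially closed because nodes are weakly continuous (Theorem~\ref{thm:node-calculus}) and $\lambda_m(q_n)\to\lambda_m(q)$ under weak convergence (Proposition~\ref{prop:eigenpair-compactness}). At a nontrivial minimizer, surjectivity from (i) lets me apply the Lagrange multiplier theorem to $p^{-1}\|q-q_0\|^p$, which yields \eqref{eq:spectral-nodal-KKT} with the gradient $E^2$ of $\lambda$. The multipliers are not all zero, since otherwise $\varphi_p(\widehat q-q_0)=0$, forcing $\widehat q=q_0$.

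For (iii) I apply Proposition~\ref{prop:finite-observation-principle}(ii) with $M=N+1$. Weak sequential continuity of $\mathcal F$ was established above, and $\mathcal G_0$ is surjective by (i); the compatibility restriction only limits which $\boldsymbol y$ are considered.

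For (iv), with $p=2$ and $d\in\{2,3\}$, I need $\mathcal F$ to be $C^2$. The nodal components are $C^2$ by Proposition~\ref{prop:Hilbert-C2}. For the eigenvalue, $D\lambda(q)=E^2$, and its differentiability into $L_d^2$ follows from smoothness of the eigenpair map into $\mathcal H_{\ell,0}\hookrightarrow L_d^4$ noted in that proof. Concretely, $q\mapsto E^2$ is $C^1$ into $L_d^2$ by the estimate $\|E_n^2-E^2\|_{L^2_d}\le\|E_n-E\|_{L^4_d}\|E_n+E\|_{L^4_d}$ and its derivative analogue.

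The Gram matrix $\mathbb M_0$ is positive definite by the independence just proved. Proposition~\ref{prop:finite-observation-principle}(iii) then gives uniqueness, $C^1$ dependence, and \eqref{eq:augmented-Hilbert-right-inverse}--\eqref{eq:augmented-Hilbert-cost}. I expect the main obstacle to be this $C^2$ check of the eigenvalue component. Everything else is a transcription of the multi-node arguments.
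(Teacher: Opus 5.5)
Your proposal is correct and matches the paper's proof essentially step by step. It uses the same division-by-$E^2$ jump argument for surjectivity, the direct method via weak continuity of the eigenvalue and the nodes, and Proposition~\ref{prop:finite-observation-principle} for (iii)--(iv), with $C^2$ regularity of $\lambda_{\ell,m}$ coming from $D\lambda=E^2$ and $\mathcal H_{\ell,0}\hookrightarrow L_d^4$; the only difference is cosmetic, since the paper shifts a nodal realizer $q_{\boldsymbol\tau}$ by the constant $\Lambda-\lambda_{\ell,m}(q_{\boldsymbol\tau})$, while you set $\lambda_*=\Lambda$ directly in the gluing construction, which is the same mechanism.
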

\begin{proof}
At a potential $q$, the component gradients of $\mathcal F$ are
\[
 E^2,\qquad E^2H_1,\ldots,E^2H_N.
\]
If a linear combination vanishes, division by $E^2>0$ almost everywhere and
comparison of the distinct jumps of the $H_\alpha$ give
$c_1=\cdots=c_N=0$, and then $c_0=0$.  Hence $D\mathcal F(q)$ is surjective
for every $q$, proving (i).

For (ii), first realize the prescribed nodal vector by
Proposition~\ref{prop:multi-global-feasibility}.  If the resulting potential is
$q_{\boldsymbol\tau}$, set
\[
 \widetilde q
 =q_{\boldsymbol\tau}+\Lambda-\lambda_{\ell,m}(q_{\boldsymbol\tau}).
\]
A constant shift moves the eigenvalue by the same constant and leaves all
nodes unchanged, so every compatible joint target is feasible.  Weak
sequential continuity of the eigenvalue and nodal components then gives
existence by the direct method, while surjectivity gives the KKT relation
\eqref{eq:spectral-nodal-KKT} at every nontrivial minimizer.

For (iii), apply Proposition~\ref{prop:finite-observation-principle} to
$\mathcal O=\mathcal F$.  This yields
\eqref{eq:augmented-local-reconstruction}--\eqref{eq:augmented-local-distance}.
If $p=2$ and $d\in\{2,3\}$, the eigenvalue map is $C^2$ because its gradient
is $E^2$ and the smooth eigenfunction map takes values in
$\mathcal H_{\ell,0}\hookrightarrow L_d^4$; each nodal component is $C^2$ by
Proposition~\ref{prop:Hilbert-C2}.  The Hilbert clause of
Proposition~\ref{prop:finite-observation-principle} therefore yields the
positive-definite augmented Gram matrix, the inverse-Gram formula, local
uniqueness, and \eqref{eq:augmented-Hilbert-reconstruction}-\eqref{eq:augmented-Hilbert-cost}.
\end{proof}

\section{Conclusion and open problems}\label{sec:conclusion}

\begin{remark}
The global uniqueness theorem follows from three independent ingredients:
(i) global selection of the critical sign; (ii) uniqueness of positive
one-domain nonlinear branches; and (iii) strict opposite monotonicity of the
weighted masses.  Hence the rigidity is produced by the geometry of the
nodal constraint and not only by a local inverse mapping argument.
\end{remark}

We have established a singular Friedrichs finite-data variational framework directly
in an arbitrary fixed angular-momentum sector.  The formulation therefore
contains the radial $\ell=0$ profile without duplicating its spectral,
nodal, variational, or reconstruction statements.  For every fixed sector,
the Friedrichs eigenfunctions have the correct $r^\ell$ endpoint behavior,
nodal maps are weakly continuous and Fr\'echet differentiable, arbitrary
compatible same-mode nodal data are globally feasible, and minimum-distance
potentials exist.  The one-node and multi-node critical equations retain the
centrifugal term, while local reconstruction is governed by
minimum-norm right inverses and, in the Hilbert case, by positive definite
Gram matrices.

The energy identity makes clear which qualitative feature is genuinely
special: monotone radial dissipation occurs when $\ell=0$, whereas for
$\ell\geq1$ the centrifugal correction destroys a fixed sign.  The regular
shooting variable $W=r^{-\ell}U$ nevertheless gives a uniform
three-parameter representation for every fixed sector.

Relative to the weighted $\ell=0$ variational framework of Cheng, He, Wang,
and Xia \cite{ChengHeWangXia2026}, the additional conclusions established
here are the arbitrary-angular-momentum Friedrichs calculus, the conditional
mixed-sector theory, automatic same-mode spectral--nodal transversality, and
the global inward uniqueness theorem.  This is the precise sense in which the
present results extend, rather than reintroduce, the radial finite-data model.

For $d\ge2$, we also obtain a global uniqueness result for inward
displacements of the unique interior node of the second radial mode. For a constant prior,
the second radial mode, and $p>(d+2)/2$, the optimal potential is unique for
each $T_*\in(0,T_0)$.  The proof fixes the critical sign globally and
reduces the shooting system to the balance of a strictly decreasing
focusing-ball mass and a strictly increasing logistic-annulus mass.

For observations from different modes or angular momenta, local
identifiability is equivalent to surjectivity of the mixed derivative, or
to linear independence of the corresponding nodal gradients.  Under this
transversality condition we obtained local feasibility, existence, a coupled
critical system, reconstruction, and Hilbert-space local uniqueness.
One eigenvalue together with finitely many nodes of the same fixed-sector
eigenfunction is automatically transverse.  Every compatible joint target
is globally feasible, and the augmented observation admits sharp local
minimum-norm reconstruction, an inverse-Gram formula, and Hilbert-space
local uniqueness.

We conclude with several open problems:
\begin{enumerate}
\renewcommand{\labelenumi}{\textup{(\roman{enumi})}}
 \item global uniqueness outside the inward second-radial-mode regime of
 Theorem~\ref{thm:global-uniqueness-inward}, including outward nodes, higher modes, nonconstant
 priors, and higher angular momenta;
 \item monotonicity or nonsingularity of the fixed-sector shooting map;
 \item global feasibility for arbitrary collections of nodes from different
 modes or angular momentum sectors;
 \item generic transversality and quantitative lower bounds for mixed Gram
 matrices;
 \item simultaneous optimization with several independent eigenvalues and
 mixed nodal constraints;
 \item sharp endpoint blow-up rates and stable reconstruction from noisy or
 interval-valued data.
\end{enumerate}
Due to the length limitation of
the paper, we will consider the above problems as future work. Furthermore, this work provides some
insights for future developments and research of Schr\"{o}dinger operators in physical applications.\\

\textbf{Declaration}

The authors declare that they have no known competing financial interests or personal relationships
that could have appeared to influence the work reported in this paper.


\begin{thebibliography}{99}


\bibitem{BrezisOswald1986}
H.~Brezis and L.~Oswald,
Remarks on sublinear elliptic equations,
\emph{Nonlinear Anal.} \textbf{10} (1986), 55--64.

\bibitem{AlbeverioHrynivMykytyuk2007}
S. Albeverio, R. Hryniv, and Ya. Mykytyuk,
Inverse spectral problems for Bessel operators,
\emph{J. Differential Equations} \textbf{241} (2007), 130--159.

\bibitem{ChengHeWangXia2026}
Z. Cheng, Y. He, S. Wang, and Y. Xia,
Finite inverse nodal problems for singular weighted Sturm--Liouville
equations: variational selection and spectral matching,
preprint, arXiv:2609.16481 (2026).

\bibitem{Chen-Cheng11}
X. Chen, Y. Cheng, C. Law,
Reconstructing potentials from zeros of one eigenfunction,
\emph{Trans. Amer. Math. Soc.} \textbf{363} (2011), 4831--4851.


\bibitem{ChuMengWangZhang2024}
J. Chu, G. Meng, F. Wang, and M. Zhang,
Optimization problems on nodes of Sturm--Liouville operators with $L^p$ potentials,
\emph{Math. Ann.} \textbf{390} (2024), 1401--1417.

\bibitem{ChuMengWangZhang2025}
J. Chu, G. Meng, F. Wang, and M. Zhang,
Complete continuity and Fr\'echet derivatives of nodes in potentials for one-dimensional $p$-Laplacian,
\emph{J. Differential Equations} \textbf{416} (2025), 1960--1976.

\bibitem{GidasNiNirenberg1979}
B. Gidas, W.-M. Ni, and L. Nirenberg,
Symmetry and related properties via the maximum principle,
\emph{Comm. Math. Phys.} \textbf{68} (1979), 209--243.

\bibitem{GuoZhang2022}
S. Guo and M. Zhang,
On the dependence of nodes of Sturm--Liouville problems on potentials,
\emph{Mediterr. J. Math.} \textbf{19} (2022), Paper No. 168.

\bibitem{HaldMcLaughlin1989}
O.~H. Hald and J.~R. McLaughlin,
Solutions of inverse nodal problems,
\emph{Inverse Problems} \textbf{5} (1989), 307--347.

\bibitem{HeWuXiaZhang2025}
Y. He, M. Wu, Y. Xia, and M. Zhang,
A novel and application-oriented inverse nodal problem for Sturm--Liouville operators,
\emph{Math. Ann.} \textbf{393} (2025), 3119--3140.

\bibitem{KostenkoSakhnovichTeschl2010}
A. Kostenko, A. Sakhnovich, and G. Teschl,
Inverse eigenvalue problems for perturbed spherical Schr\"odinger operators,
\emph{Inverse Problems} \textbf{26} (2010), 105013, 14 pp.

\bibitem{KostenkoTeschl2011}
A. Kostenko and G. Teschl,
On the singular Weyl--Titchmarsh function of perturbed spherical Schr\"odinger operators,
\emph{J. Differential Equations} \textbf{250} (2011), 3701--3739.

\bibitem{McLaughlin1988}
J.~R. McLaughlin,
Inverse spectral theory using nodal points as data---a uniqueness result,
\emph{J. Differential Equations} \textbf{73} (1988), 354--362.


\bibitem{NorisTavaresVerzini2014}

B. Noris, H. Tavares, and G. Verzini,
Existence and orbital stability of the ground states with prescribed mass for
the $L^2$-critical and supercritical NLS on bounded domains,
\emph{Anal. PDE} \textbf{7} (2014), no.~8, 1807--1838.

\bibitem{LandauLifshitz1977}
L.~D. Landau and E.~M. Lifshitz,
\emph{Quantum Mechanics: Non-Relativistic Theory}, 3rd ed.,
Course of Theoretical Physics, Vol.~3, Pergamon Press, Oxford, 1977.

\bibitem{ChadanSabatier1989}
K. Chadan and P.~C. Sabatier,
\emph{Inverse Problems in Quantum Scattering Theory}, 2nd ed.,
Springer-Verlag, Berlin, 1989.

\bibitem{GharaatiKhordad2010}
A. Gharaati and R. Khordad,
A new confinement potential in spherical quantum dots: modified Gaussian potential,
\emph{Superlattices Microstruct.} \textbf{48} (2010), 276--287.

\bibitem{Serier2007}
F. Serier,
The inverse spectral problem for radial Schr\"odinger operators on $[0,1]$,
\emph{J. Differential Equations} \textbf{235} (2007), 101--126.


\bibitem{Teschl2014}
G. Teschl,
\emph{Mathematical Methods in Quantum Mechanics: With Applications to Schr\"odinger Operators}, 2nd ed.,
Graduate Studies in Mathematics 157, American Mathematical Society, Providence, RI, 2014.

\bibitem{Yang1997}
X.-F. Yang,
A solution of the inverse nodal problem,
\emph{Inverse Problems} \textbf{13} (1997), 203--213.

\bibitem{XuYangBondarenko2023}
X.-J. Xu, C.-F. Yang, and N. Bondarenko,
Inverse spectral problems for radial Schr\"odinger operators and closed systems,
\emph{J. Differential Equations} \textbf{342} (2023), 343--368.

\bibitem{Zettl}
A. Zettl,
\emph{Sturm--Liouville Theory},
Mathematical Surveys and Monographs 121, American Mathematical Society, Providence, RI, 2005.


\bibitem{ChuMengXie2026}
J. Chu, G. Meng, and N. Xie,
An inverse problem for Sturm--Liouville equations with a fixed node,
\emph{J. Differential Equations} \textbf{454} (2026), 113966.

\bibitem{ArslantasDurakAmirov2026}
M. Arslanta\c{s}, S. Durak, and R. Amirov,
Inverse nodal problem for singular Sturm--Liouville operator,
\emph{Math. Methods Appl. Sci.} \textbf{49} (2026), no.~9, 9022--9032.

\bibitem{JiangXuYang2026}
C.-T. Jiang, X.-J. Xu, and C.-F. Yang,
Solving inverse nodal problems of Sturm--Liouville operator with a point
interaction based on Legendre wavelet bases,
\emph{Math. Comput. Simulation} \textbf{250} (2026), 1313--1324.

\bibitem{PanakhovKoyunbakan2006}
E.~S. Panakhov and H. Koyunbakan,
Inverse nodal problems for second order differential operators with a regular singularity,
\emph{Int. J. Difference Equ.} \textbf{1} (2006), no.~2, 241--247.

\bibitem{KoyunbakanPanakhov2006}
H. Koyunbakan and E.~S. Panakhov,
Solution of a discontinuous inverse nodal problem on a finite interval,
\emph{Math. Comput. Modelling} \textbf{44} (2006), 204--209.

\bibitem{KoyunbakanPanakhov2007}
H. Koyunbakan and E.~S. Panakhov,
A uniqueness theorem for inverse nodal problem,
\emph{Inverse Probl. Sci. Eng.} \textbf{15} (2007), no.~6, 517--524.

\bibitem{guo-wei}
Y. Guo and G. Wei,
The sharp conditions of the uniqueness for inverse nodal problems,
\emph{J. Differential Equations} \textbf{266} (2019), 4432--4449.

\bibitem{G-Z2}
S. Guo and M. Zhang,
A variational approach to the optimal locations of the nodes of the second Dirichlet eigenfunctions,
\emph{Math. Methods Appl. Sci.} \textbf{46} (2023), 11983--12006.

\bibitem{H-M}
O. Hald and J. McLaughlin,
Inverse nodal problems: finding the potential from nodal lines,
\emph{Mem. Amer. Math. Soc.} \textbf{119} (1996), 148 pp.

\bibitem{V-I2019}
Y. Il'yasov and N. Valeev,
On nonlinear boundary value problem corresponding to $N$-dimensional inverse spectral problem,
\emph{J. Differential Equations} \textbf{266} (2019), 4533--4543.

\bibitem{P-S}
J. Pinasco and C. Scarola,
A nodal inverse problem for a quasi-linear ordinary differential equation in the half-line,
\emph{J. Differential Equations} \textbf{261} (2016), 1000--1016.

\bibitem{w-y}
Y. Wang and V. Yurko,
On the inverse nodal problems for discontinuous Sturm--Liouville operators,
\emph{J. Differential Equations} \textbf{260} (2016), 4086--4109.

\bibitem{YANGCF}
C.-F. Yang,
Inverse nodal problems of discontinuous Sturm--Liouville operator,
\emph{J. Differential Equations} \textbf{254} (2013), 1992--2014.

\bibitem{yang2}
X.-F. Yang,
A new inverse nodal problem,
\emph{J. Differential Equations} \textbf{169} (2001), 633--653.

\bibitem{zhang}
M. Zhang,
Continuity in weak topology: higher order linear systems of ODE,
\emph{Sci. China Ser. A} \textbf{51} (2008), 1036--1058.

\end{thebibliography}
\end{document}